\definecolor{darkgreen}{rgb}{0,0.5,0}
\definecolor{darkred}{rgb}{0.7,0,0}
\theoremstyle{plain}
\newtheorem{lemma}{Lemma}[section]
\newtheorem{thm}[lemma]{Theorem}
\newtheorem{prop}[lemma]{Proposition}
\newtheorem{cor}[lemma]{Corollary}
\theoremstyle{definition}
\newtheorem{rmk}[lemma]{Remark}
\def\blbox{\quad \vrule height7.5pt width4.17pt depth0pt}
\newcommand{\gcmt}[1]{\opt{draft}{\textcolor[rgb]{0,0.5,0}{
$\LHD$ #1 $\RHD$\marginpar{\blbox}}}}
\numberwithin{equation}{section}
\newcommand{\al}{\alpha}
\newcommand{\de}{\delta}
\newcommand{\om}{\omega}
\newcommand{\Om}{\Omega}
\newcommand{\La}{\Lambda}
\newcommand{\si}{\sigma}
\newcommand{\Si}{\Sigma}
\renewcommand{\th}{\theta}
\newcommand{\R}{\ensuremath{{\mathbb R}}}
\newcommand{\N}{\ensuremath{{\mathbb N}}}
\newcommand{\C}{\ensuremath{{\mathbb C}}}
\newcommand{\diam}{\text{diam}}
\newcommand{\upto}{\uparrow}
\newcommand{\norm}[1]{\Vert#1\Vert} 
\def\osc{\mathop{{\mathrm{osc}}}\limits}
\def\blbox{\quad \vrule height7.5pt width4.17pt depth0pt}
\newcommand{\beq}{\begin{equation}}
\newcommand{\eeq}{\end{equation}}
\newcommand{\beqs}{\begin{equation*}}
\newcommand{\eeqs}{\end{equation*}}
\newcommand{\beqa}{\begin{equation}\begin{aligned}}
\newcommand{\eeqa}{\end{aligned}\end{equation}}
\newcommand{\beqas}{\begin{equation*}\begin{aligned}}
\newcommand{\eeqas}{\end{aligned}\end{equation*}}
\newcommand{\brmk}{\begin{rmk}}
\newcommand{\ermk}{\end{rmk}}
\newcommand{\partref}[1]{\hbox{(\csname @roman\endcsname{\ref{#1}})}}
\newcommand{\half}{\frac{1}{2}}
\newcommand{\thalf}{\tfrac{1}{2}}
\newcommand{\mhalf}{{-\half}}
\newcommand{\na}{\nabla}
\newcommand{\pt}{\partial_t}
\newcommand{\abs}[1]{\vert#1\vert} 
\newcommand{\babs}[1]{\left\vert#1\right\vert}
\newcommand{\dist}{\text{dist}}
\newcommand{\supp}{\text{supp}} 
\newcommand{\eps}{\varepsilon}
\newcommand{\Ede}{E_\partial}
\newcommand{\leqs}{\lesssim}
\newcommand{\Loj}{{\L}ojasiewicz }
\newcommand{\DD}{\mathbb{D}}
\newcommand{\tc}{\tau_{\C}}
\newcommand{\tg}{\tau_g}
\newcommand{\ftau}{f_\tau}
\newcommand{\ddt}{\tfrac{d}{dt}}
\newcommand{\gz}{{g_{S^2}}}
\newcommand{\dvgz}{d v_{g_{S^2}}}
\newcommand{\la}{\lambda}
\newcommand{\Area}{\text{Area}}
\title[Rigidity estimates for maps between spheres]{\sc Sharp quantitative rigidity results for maps from  $S^2$ to $S^2$ of general degree}
\author{Melanie Rupflin}
\date{\today}
\begin{document}

\maketitle

\begin{abstract}
As the energy of any map $v$ from $S^2$ to $S^2$ is at least $4\pi \abs{\deg(v)}$ with equality if and only if $v$ is a rational map one might ask whether maps with small energy defect $\de_v=E(v)-4\pi \abs{\deg(v)}$ are necessarily close to a rational map. While such a rigidity statement turns out to be false for maps of general degree, we will prove that any  map $v$ with small energy defect is essentially given by a collection of rational maps that describe the behaviour of $v$ at very different scales and that the corresponding distance is controlled by a quantitative rigidity estimate of the form $\dist^2\leq C \de_v(1+\abs{\log\de_v})$ which is indeed sharp. 
\end{abstract}

\section{Introduction}
A natural question that arises in the study of many variational problems is the rigidity or stability of minimisers, i.e.~the question whether knowing that the energy of an object (such as a function, map, set etc) is close to the minimal possible energy $E_{min}$ is sufficient to deduce that the object is close to a minimiser. 

It is of particular interest to understand this question not only at a qualitative but also at a quantitative level, and hence to ask whether it is possible to bound the distance of an object $v$ to the set of minimisers by a function $f(\de_v)$ of the energy defect $\de_v=E(v)-E_{min}$,  with $f(\de)\to 0$ as $\de\to 0$ and if so, what the optimal such estimate is.

Over the past two decades there has been significant interest and very substantial progress for such problems related to a large variety of important topics in mathematics, including the isoperimetric problem, see e.g. \cite{sharp-isoperimetric, isoperimetric-FMP}, geometric variational problems \cite{DeLellis-Mueller, Almost-Schur, Lamm-Nguyen, Ciraolo, Yamabe, Luckhaus},
 elasticity theory \cite{Friesecke, DeLellis-Sz, ellastic1, Dolzmann} and 
Sobolev inequalities, see for instance \cite{Fusco-Sobolev, Figalli-Maggi-Pratelli-Sobolev, Faber-Krahn, Figalli-recent, Sobolev-Maggi-Neumayer} and the references therein.


Here we consider this question for a very classical problem, namely the Dirichlet energy 
$$E(v)=\thalf \int_{S^2} \abs{\na v} ^2 \dvgz \text{ of maps } v:S^2\to S^2$$
with given degree from $S^2=\{y\in \R^3: \abs{y}=1\}$ to itself and the corresponding 
minimisers which are rational maps, i.e.~maps that are given by meromorphic functions in stereographic coordinates. Compared with many of the known rigidity estimates, this problem turns out to have two rather surprising features:
First of all rigidity (even at the non-quantitative level) fails for maps of general degree if one considers the distance to the set of rational maps, but quantitative rigidity estimates are valid if one extends the set of comparison objects to collections of rational maps which concentrate on essentially disjoint domains. Secondly, the sharp rate in the quantitative rigidity estimates depends on the degree of the map and while the rigidity estimate \eqref{est:quant-deg-1}  for degrees $\pm 1$ maps that was obtained in \cite{Bernard-Muratov-Simon} has the well known form $\dist^2\leq C \de_v$ we  will see that for any other degree the sharp estimate takes the rather surprising form of $\dist^2\leq C \de_v\abs{\log\de_v}$.

To make this precise we 
recall that the Dirichlet energy 
of any map $v:S^2\to S^2$ is bounded below by 
$E(v)\geq 4\pi\abs{\deg(v)}$
with equality if and only if $v$ is a critical point of $E$, i.e.~a harmonic map. We furthermore recall the well known fact that a map from the standard sphere $S^2$ to itself is harmonic if and only if it is a rational map, i.e.~given by a meromorphic function  from $\hat \C$ to $\hat \C$ in either $z$ or $\bar z$ if we work in stereographic coordinates on both the domain and the target. 
It is hence natural to ask whether the distance of a map to the set of rational maps can be controlled in terms of the energy defect 
\beq
\label{def:energy-defect}
\de_v:= E(v)-4\pi \abs{\deg v}.
\eeq
This is of course trivially true for degree $0$ maps. For maps of degree $\pm 1$ Bernand-Mantel, Muratov and Simon gave a positive answer to this question in \cite{Bernard-Muratov-Simon} 
where they established  a sharp quantitative rigidity estimate, the proof  of which was subsequently simplified  by Topping in \cite{Topping-deg-1} and by Hirsch and Zemas in \cite{Hirsch} using ideas from geometric analysis. These papers establish
that for any degree $\pm 1$ map 
$v:S^2\to S^2$ there exists a degree $\pm 1$ 
rational map $\om:S^2\to S^2$, i.e.~a M\"obius transform in $z$ or $\bar z$, so that 
\beq
\int_{S^2} \abs{\na(v-\om)}^2 \dvgz \leq C\de_v.
\label{est:quant-deg-1} 
\eeq
In contrast, for  maps with $\abs{\deg(v)}=k\geq 2$ it is not only the above specific rigidity estimate that fails, as was already observed in \cite{Deng}, but rigidity fails even at a non-quantitative level. 

Namely,  we can construct a sequence of 
 maps $v_n$ of any given degree $k\geq 2$ so that 
\beq
\label{claim:intro} 
\de_{v_n}\to 0 \text{ but } \int_{S^2}\abs{\na( v_n-\om_n)}^2\geq c>0\text{ for all } n \text{ and all rational maps } \om_n, \eeq 
 e.g.~by gluing highly concentrated copies $\si(\mu_n z)$, $\mu_n\to \infty$, of a rational map $\si:\C \to S^2$
onto another fixed rational map $\si_0:S^2\to S^2$ at a point $p$ with $\si_0(p)\neq \si(\infty)$, see Section 
 \ref{sect:sharp}, and in particular Remark \ref{rmk:no-rigidity}, for details. 
While the 
 distance of these maps $v_n$ to any single rational map is of order $1$, 
they will essentially look like a combination of two rational maps which appear at increasingly different scales. 

The natural question to ask is hence not whether maps $v$ with small energy defect are close to a single rational map, but rather whether they are essentially described by a \textit{collection of rational maps}, which correspond to the behaviour of $v$ at \textit{very different scales}. We will indeed see that this is the case and that the corresponding distance is governed by a sharp quantitative rigidity estimate.

As the problem is invariant with respect to pull-back by M\"obius transforms we want to measure distances in ways that respect this symmetry. One natural choice is the 
$\dot H^1$-distance which is conformally invariant for maps from 2-dimensional domains and as our first main result we prove

\begin{thm}
\label{thm:3}
For any $\alpha<\infty$ and any $k\in \N$ there exists a constant $C$ so that for any map
 $v\in H^1(S^2,S^2)$ of degree $k$ there exists a 
collection of rational maps $\om_1,\ldots, \om_n$ from $S^2$ to $S^2$ with  $\deg(\om_i)\geq 1$ and  $\sum_{i=1}^n \deg(\om_i)= k$ 
and a corresponding partition of $S^2$ into disjoint subsets $\Om_i$,  obtained from balls $B_i$ by cutting out a (potentially empty) collection of smaller balls, so that the following holds: 

 The map 
$v$ is essentially given by $\om_i$
on $\Om_i$  in the sense that 
\beq
\label{claim:thm3-1}
\int_{\Om_i}\abs{\na(v-\om_i)}^2 \dvgz\leq C \de_v(\abs{\log(\de_v)}+1) \text{ for every } i
\eeq
and $\om_i$ is essentially constant outside of $\Om_i$ in the sense that 
\beq
\label{claim:thm3-2}
\int_{S^2\setminus \Om_i} \abs{\na \om_i}^2\dvgz\leq C (\de_v)^{2\al} 
 \text{ and } \osc_{U}\om_i \leq C (\de_v)^{\al}
\eeq
for every connected component $U$ of  $S^2\setminus \Om_i$ and every $i=1,\ldots,n$. 
\end{thm}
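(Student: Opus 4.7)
I would prove the theorem by induction on the degree $k$, using the sharp degree-$1$ estimate \eqref{est:quant-deg-1} as the base case. For $k \geq 2$ the idea is to identify the \emph{finest} scale at which $v$ concentrates, separate the corresponding bubble from the rest of $v$ via a carefully chosen annular region, and apply the inductive hypothesis both to the rescaled interior bubble and to the exterior complement map of strictly smaller degree.

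\textbf{Step 1 (locate a bubble).} Consider the concentration function $\mu(p,r) = \tfrac12 \int_{B_r(p)} |\na v|^2 \, \dvgz$. Either $\mu(p,r) < 4\pi - \eta_0$ for all $(p,r)$ with $r \leq \pi$ --- the no-concentration case, in which $v$ is globally close to a single degree-$k$ rational map and one can prove $\int |\na(v-\om_1)|^2 \leqs \de_v$ by extending the Topping / Hirsch--Zemas variational argument to the finite-dimensional manifold $\mathcal R_k$ of degree-$k$ rational maps (which is non-degenerate away from its boundary at infinity) --- or else there exist a finest concentration point $p_*$ and smallest radius $\lambda_*$ with $\mu(p_*, \lambda_*) \approx 4\pi j$ for some $1 \leq j \leq k-1$, marking a bubble of degree $j$.

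\textbf{Step 2 (annular decoupling; the log factor).} By a Fubini/pigeonhole argument on the radial variable, choose radii $r_1 \sim \lambda_*$ and $r_2$ with $\log(r_2/r_1) \sim C_\alpha |\log \de_v|$ so that the circles $\partial B_{r_i}(p_*)$ carry small tangential energy, $v$ is nearly constant on them, and
\beqs
\int_{B_{r_2}(p_*) \setminus B_{r_1}(p_*)} |\na v|^2 \, \dvgz \leqs \de_v \, |\log \de_v|.
\eeqs
This is the source of the logarithmic factor in \eqref{claim:thm3-1}. The constant $C_\alpha$ is tuned large enough that the rational maps produced on inner scales are sufficiently concentrated to satisfy the oscillation bound \eqref{claim:thm3-2} for the prescribed exponent $\alpha$.

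\textbf{Step 3 (induction and assembly).} Conformally rescale $B_{r_1}(p_*)$ to cover essentially all of $S^2$; the rescaled map has degree $j < k$ and defect $\leq \de_v$, so induction gives rational maps $\om_{m+1}, \dots, \om_n$ on nested sub-regions of $B_{r_1}(p_*)$. For the exterior, cap $B_{r_2}(p_*)$ with a near-constant extension (possible since $v$ is nearly constant on $\partial B_{r_2}$) to produce a degree-$(k-j)$ map on $S^2$, and apply induction to get $\om_1, \dots, \om_m$. Pulling back under the rescaling and assembling the two sides yields the partition $\{\Om_i\}$ and rational maps $\{\om_i\}$; the annulus $A$ is absorbed into the outermost region $\Om_1$, on which $\om_1$ is nearly constant and faithfully approximates the nearly-constant $v|_A$.

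\textbf{Main obstacle.} The principal difficulty is that the logarithmic factor must appear \emph{linearly} rather than compound with the depth of the bubble tree: a naive application of induction charges each of the $n$ levels its own $|\log \de_v|$, which yields $|\log \de_v|^n$ and is far too large to be sharp. The remedy is to reformulate the induction in terms of \emph{local} energy defects $\de_i$ on each sub-region with $\sum_i \de_i \leqs \de_v$, so that each scale in the bubble tree contributes at most one logarithm and the total cost of transitioning between scales telescopes to a single $\de_v |\log \de_v|$. A secondary delicate point is establishing the no-concentration base case for $k \geq 2$ without a logarithm, which requires extending the sharp degree-$1$ non-degeneracy argument to the higher-dimensional manifold $\mathcal R_k$.
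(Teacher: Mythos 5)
Your proposal takes a genuinely different route from the paper (which proves the companion ``gauge'' theorem first by running a weighted harmonic map flow and controlling it with a new \Loj estimate for weighted metrics, Proposition \ref{prop:1}, and then deduces the $H^1$ bound \eqref{claim:thm3-1} from the weighted $L^2$ bound \eqref{claim:thm1} by expanding the square and using the harmonic map equation $-\Delta\om_i=A(\om_i)(\na\om_i,\na\om_i)$). Unfortunately your induction-on-degree scheme has two gaps that I think are fatal as written. First, your base case is not established: the claim that in the no-concentration regime one gets $\int|\na(v-\om_1)|^2\leqs\de_v$ ``by extending the Topping / Hirsch--Zemas variational argument to $\mathcal R_k$'' is essentially a restatement of the hard compact-regime rigidity problem, not a proof. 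The paper handles this regime via the flow plus \Loj machinery (Remark \ref{rmk:simple-case}), and even there the uniformity of the estimate as one approaches the boundary of the moduli space of degree-$k$ rational maps is exactly the delicate point; asserting non-degeneracy of $\mathcal R_k$ ``away from its boundary at infinity'' does not give a uniform constant for maps that may be arbitrarily close to that boundary, which is precisely the situation your induction must cover after each rescaling.

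Second, and more fundamentally, you have no mechanism that actually produces the $H^1$ estimate with a single logarithm. Your Step 2 attributes the factor $|\log\de_v|$ to the energy of $v$ on the transition annulus, but in the sharp examples of Section \ref{sect:sharp} the annular energy is only $O(a^2/\log\mu)=O(\de_v)$; the quantity of size $\de_v|\log\de_v|$ is the zeroth-order mismatch $|v-\om_i|\sim a$ weighted by $|\na\om_i|^2$, spread over the bulk where $\om_i$ carries unit energy. Converting that zeroth-order mismatch into a bound on $\int_{\Om_i}|\na(v-\om_i)|^2$ requires the cross term $\int\na\om_i\cdot\na(\om_i-v)$ to be controlled, which the paper does by harmonicity of $\om_i$ together with the weighted $L^2$ bound $\int|\na\om_i|^2|\om_i-v|^2\leq C\de_v|\log\de_v|$ --- an estimate your scheme never produces. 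Relatedly, your proposed fix for the compounding of logarithms (``local energy defects $\de_i$ with $\sum_i\de_i\leqs\de_v$'') is not backed by an argument: the capping construction at each level of the bubble tree injects errors, the degree of a restricted map is only defined after capping, and each application of the (unproven) base case would charge $\de_i|\log\de_i|$ with $|\log\de_i|\geq|\log\de_v|$, so the telescoping you need is not the one you have set up. Without a substitute for the weighted \Loj estimate and the flow (or some other device giving the weighted $L^2$ control), I do not see how to close the induction.
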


We note that $\al$ should be thought of as a very large number that can be freely chosen depending on what level of interaction between the different scales one wants to allow.

For maps $v$ with negative degree the analogue of Theorem \ref{thm:3}, and of all further results, of course also holds true and follows by composing such maps with a reflection.

We  will see below that 
these 
 domains $\Om_i$ correspond to vastly different scales in the sense that 
if we look at our problem in a gauge which is so that 
at least some of the energy of $\om_i$ appears at scale one (as opposed to essentially all of it concentrating on very small balls), then all other sets $\Om_j$ will correspond to sets with very small diameter. 
In such a view point $v$ is hence essentially described by the corresponding $\om_i$ while the other maps $\om_j$ look like they are constant.

We can use this idea of viewing the collection of rational maps $\om_i$ as representatives of $v$ in different gauges to formulate an alternative version of the above rigidity result.

 \begin{thm}
 \label{thm:gauge}
For any $\alpha<\infty$ and any $k\in \N$ there exists a constant $C$ so that for any map
 $v\in H^1(S^2,S^2)$ of degree $k\in \N$ there exists a 
collection of rational maps $\om_1,\ldots, \om_n$ from $S^2$ to $S^2$ with $\deg(\om_i)\geq 1$ and $\sum_{i=1}^n \deg(\om_i)=k$ 
 so that the following holds: 

For each $\om_i$ there is a collection of $m_i\leq \deg(\om_i)$ 
 M\"obius transforms $(M_{i}^j)_{j=1}^{m_i}$
 so that 
 \beq
\label{claim:thm-gauge-1}
\abs{\na \om_i}^2\leq C \rho_i^2 \text{ for } \rho_i^2:=\thalf \sum_{j=1}^{m_i}\abs{\na M_i^j}^2
\eeq
and so that $v$ is $L^2$ close to $\om_i$ in any of the corresponding gauges in the sense that 
\beq
\label{claim:thm-gauge-2}
\norm{v-\om_i}_{L^2(S^2,\rho_i^2 \gz)}^2=\sum_{j=1}^{m_i} \int_{S^2}\abs{(v-\om_i)\circ (M_i^j)^{-1}}^2 \dvgz 
\leq C \de_v (1+\abs{\log\de_v}).
\eeq
Furthermore, these $\om_i$ represent the behaviour of $v$ at vastly different scales in the sense that we can partition $S^2$ into connected sets $\Om_i$, obtained from balls in $S^2$ by removing a (possibly empty) set of smaller balls, in a way that the sets $\Om_i$ represent essentially all $S^2$ in the corresponding gauges in the sense that 
\beq
\label{claim:Omi-gauge}
\Area_{\rho_i^2 \gz}( S^2\setminus \Om_i)=
\sum_{j=1}^{m_i} \Area_{(M_i^j)^*\gz}(\Om_i)=
\sum_{j=1}^{m_i} \Area_{\gz}(M_i^j(S^2\setminus \Om_i))\leq C\de_v^{2\al}
\eeq
and
so that the oscillation of $\om_i$ over any connected component of $S^2\setminus \Om_i$ is bounded by $C\de_v^\al$. 
\end{thm}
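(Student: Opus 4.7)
The plan is to derive Theorem \ref{thm:gauge} from Theorem \ref{thm:3}. The rational maps $\omega_i$, the partition $\{\Omega_i\}$, and the gradient control $\int_{\Omega_i}|\na(v-\om_i)|^2\leq C\de_v(1+|\log\de_v|)$ are already provided by Theorem \ref{thm:3}. The new content that needs to be established is the construction, for each $\om_i$, of the M\"obius transforms $M_i^j$ with $m_i\leq \deg(\om_i)$ so that the pointwise bound \eqref{claim:thm-gauge-1} holds and so that the scale separation forces the area bound \eqref{claim:Omi-gauge}, and then the translation of the $H^1$-control into the weighted $L^2$-control \eqref{claim:thm-gauge-2} via conformal invariance.

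For the construction of the $M_i^j$ I would analyse the \emph{bubble-tree structure} of each rational map $\om_i$. A rational map of degree $d_i$ concentrates its total energy $4\pi d_i$ across up to $d_i$ distinct scales: a greedy/iterative construction, peeling off the scale at which $|\na\om_i|$ is currently largest and choosing a M\"obius transform $M_i^j$ concentrated at the corresponding point and scale, terminates after at most $d_i$ steps since each step accounts for at least $4\pi$ of the energy up to lower order. The pointwise bound $|\na\om_i|^2\leq C\rho_i^2$ then follows because at every $p\in S^2$ one of the chosen $M_i^j$ has $|\na M_i^j(p)|^2$ comparable to the dominant scale of $\om_i$ at $p$; this is essentially the statement that the peeled-off pieces locally exhaust $\om_i$. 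The area bound \eqref{claim:Omi-gauge} then follows by matching these scales with the scales separated by Theorem \ref{thm:3}: each M\"obius transform $M_i^j$ has its energy concentrated in a small spherical cap, and the cap corresponding to scales not captured by $\om_i$ on $\Om_i$ is forced to lie, up to error $\de_v^{2\al}$, inside $M_i^j(S^2\setminus\Om_i)$.

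For the $L^2$ estimate \eqref{claim:thm-gauge-2} I would work one $M_i^j$ at a time. Set $w=v-\om_i$ and write
\beqs
\int_{S^2}|w\circ (M_i^j)^{-1}|^2\dvgz=\int_{S^2}|w|^2\,\tfrac12|\na M_i^j|^2\dvgz.
\eeqs
By conformal invariance of the Dirichlet integral in two dimensions, $\int_{S^2}|\na(w\circ (M_i^j)^{-1})|^2\dvgz=\int_{S^2}|\na w|^2\dvgz$, and Theorem \ref{thm:3} bounds the integral of $|\na w|^2$ over $\Om_i$ by $C\de_v(1+|\log\de_v|)$; on $S^2\setminus\Om_i$ one controls $w$ by combining the oscillation bound on $\om_i$ (from \eqref{claim:thm3-2}) with a $|w|\leq 2$ estimate weighted against the small $(M_i^j)$-pullback area \eqref{claim:Omi-gauge}, which contributes at most $C\de_v^{2\al}$ and is absorbed. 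With an appropriate choice of mean value (the average of $w\circ (M_i^j)^{-1}$ over the large preimage of $\Om_i$) one then closes the argument with a standard Poincar\'e inequality on $S^2$. The main obstacle is the bubble-decomposition step: one must argue that only $d_i$ M\"obius transforms suffice and that the resulting $\rho_i^2$ dominates $|\na\om_i|^2$ \emph{pointwise} rather than merely in integrated form. The logarithm in \eqref{claim:thm-gauge-2} is inherited directly from Theorem \ref{thm:3} and is the natural cost of capping Hardy/logarithmic weights that appear when passing between scales.
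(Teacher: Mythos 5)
Your proposal runs in the wrong logical direction and, as a result, has no access to the key estimate. In the paper the implication goes from Theorem \ref{thm:gauge} to Theorem \ref{thm:3}: the weighted $L^2$ bound \eqref{claim:thm-gauge-2} is the primary object, obtained by evolving $v$ under a harmonic map flow weighted by a metric $g_i=\rho_i^2\gz$ adapted to the concentration scales of $v$, and bounding the $L^2(S^2,g_i)$ length of the flow line by $C\de_v^{1/2}\abs{\log\de_v}^{1/2}$ via a new \Loj estimate for weighted tension fields (Proposition \ref{prop:1}); the $H^1$ bound \eqref{claim:thm3-1} is then deduced from \eqref{claim:thm-gauge-2} using the harmonic map equation $-\Delta\om_i=A(\om_i)(\na\om_i,\na\om_i)$ and the fact that $A$ maps into the normal bundle. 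Your plan assumes Theorem \ref{thm:3} as given, but its only proof goes through Theorem \ref{thm:gauge}, so the argument is circular unless you supply an independent proof of \eqref{claim:thm3-1} — and the entire analytic machinery needed for that (the weighted flow, the weighted \Loj estimate, the three-step extraction of the $\om_i$ across clusters of scales) is absent from your proposal.

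Even granting Theorem \ref{thm:3}, the step from \eqref{claim:thm3-1} to \eqref{claim:thm-gauge-2} does not close. A Poincar\'e inequality applied to $w\circ(M_i^j)^{-1}$ on $M_i^j(\Om_i)$ controls only $\int\abs{w\circ(M_i^j)^{-1}-\bar w_j}^2$, and the mean value $\bar w_j$ is precisely the quantity that an $\dot H^1$ bound cannot see; your phrase ``an appropriate choice of mean value then closes the argument'' is where the actual content lies, and no mechanism is offered to show $\abs{\bar w_j}^2\leq C\de_v(1+\abs{\log\de_v})$. The paper never faces this issue because the flow argument bounds the genuine $L^2(S^2,g_i)$ distance $\norm{u_0-u(\infty)}_{L^2(S^2,g_i)}$ directly, with no mean subtracted. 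A further gap: the area bound \eqref{claim:Omi-gauge} cannot be deduced from the energy bound $\int_{S^2\setminus\Om_i}\abs{\na\om_i}^2\leq C\de_v^{2\al}$ of \eqref{claim:thm3-2}, because \eqref{claim:thm-gauge-1} is only the one-sided inequality $\abs{\na\om_i}^2\leq C\rho_i^2$; near branch points of $\om_i$ the energy density can be far smaller than $\rho_i^2$, so smallness of $\int_{S^2\setminus\Om_i}\abs{\na\om_i}^2$ does not imply smallness of $\int_{S^2\setminus\Om_i}\rho_i^2$. In the paper both \eqref{claim:thm-gauge-1} (via Lemma \ref{lemma:rational} applied to the limit of the second flow) and \eqref{claim:Omi-gauge} (via the explicit scale separation $s_1^*\ll S_1^*\ll\min_{I_1}s_i$ built into the construction of $\Om_i$) come out of the construction itself, not from the conclusions of Theorem \ref{thm:3}.
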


We note that 
\eqref{claim:Omi-gauge} ensures that in any of the other gauges $\Om_i$ corresponds to a set with Area no more than $C\de_{v}^{2\al}$ and that \eqref{claim:thm-gauge-1} and \eqref{claim:Omi-gauge} immediately imply that the energy of $\om_i$ is essentially concentrated on $\Om_i$ as described in the second claim \eqref{claim:thm3-2} of the previous Theorem \ref{thm:3}. We will see that the above two results hold simultaneously for the same collection of rational maps and domains and will indeed prove Theorem \ref{thm:3} based on Theorem \ref{thm:gauge}. 

As an immediate consequence of Theorem \ref{thm:gauge} we also obtain the following

\begin{cor}\label{cor}
For any $\alpha<\infty$ and any $k\in \N$ there exists a constant $C$ so that for any map
 $v\in H^1(S^2,S^2)$ of degree $k\in \N$ there exists a 
collection of rational maps $\om_1,\ldots, \om_n$ from $S^2$ to $S^2$  with $\deg(\om_i)\geq 1$ and $\sum_{i=1}^n \deg(\om_i)=k$ so that
\beq
\label{claim:thm1}
\int_{S^2} \abs{\na \om_i}^2 \abs{\om_i-v}^2 \dvgz \leq C \de_v(1+\abs{\log\de_{v}}), \quad i=1,\ldots, n\eeq
and so that for each $i\neq j$ there exists a constant $c_{i,j}\in S^2$ with
$$\int\abs{\na \om_j}^2\abs{\om_i-c_{i,j}}\leq C (\de_v)^{2\al}.$$
\end{cor}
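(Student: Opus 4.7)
The plan is to obtain Corollary \ref{cor} as a direct consequence of Theorem \ref{thm:gauge}, taking the collection $\om_1,\ldots,\om_n$ and the partition $\Om_1,\ldots,\Om_n$ to be those produced by Theorem \ref{thm:gauge} applied with the exponent $2\alpha$ in place of $\alpha$ (which is allowed since the theorem holds for every finite exponent). The constant $C$ in the corollary will then be the one supplied by Theorem \ref{thm:gauge} for the pair $(2\alpha,k)$, and will depend on $\alpha$ and $k$ as required.

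The first claim \eqref{claim:thm1} follows by a single manipulation: combining the pointwise bound $\abs{\na\om_i}^2\leq C\rho_i^2$ from \eqref{claim:thm-gauge-1} with \eqref{claim:thm-gauge-2} yields
\[
\int_{S^2}\abs{\na\om_i}^2\abs{\om_i-v}^2\dvgz\leq C\int_{S^2}\rho_i^2\abs{\om_i-v}^2\dvgz=C\norm{v-\om_i}_{L^2(S^2,\rho_i^2\gz)}^2\leq C\de_v(1+\abs{\log\de_v}),
\]
so no further work is needed for this estimate.

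For the second claim, the key structural observation is that for any $j\neq i$ the connected set $\Om_j$ must lie in the closure of a single connected component $U$ of $S^2\setminus\Om_i$: since the $\Om_i$'s form an interior-disjoint partition of $S^2$ and each $\Om_i$ is a closed ball with finitely many smaller disjoint open balls removed, $S^2\setminus\Om_i$ has finitely many connected components, and a connected subset disjoint from the interior of $\Om_i$ lies in the closure of exactly one of them. I would then take $c_{i,j}:=\om_i(p)$ for any fixed $p\in\Om_j$, so that the oscillation bound of Theorem \ref{thm:gauge} (with the doubled exponent) gives $\abs{\om_i-c_{i,j}}\leq C\de_v^{2\alpha}$ on all of $\Om_j$. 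Splitting the integral over $\Om_j$ and its complement, the piece on $\Om_j$ is at most $C\de_v^{2\alpha}\cdot E(\om_j)\leq C\de_v^{2\alpha}$ since $E(\om_j)=4\pi\deg(\om_j)\leq 4\pi k$, while the piece on $S^2\setminus\Om_j$ is bounded, via the trivial inequality $\abs{\om_i-c_{i,j}}\leq 2$, the pointwise control $\abs{\na\om_j}^2\leq C\rho_j^2$, and the area estimate \eqref{claim:Omi-gauge} with exponent $2\alpha$, by
\[
\int_{S^2\setminus\Om_j}\abs{\na\om_j}^2\abs{\om_i-c_{i,j}}\dvgz\leq 2C\,\Area_{\rho_j^2\gz}(S^2\setminus\Om_j)\leq C\de_v^{4\alpha},
\]
which is dominated by $C\de_v^{2\alpha}$ in the regime $\de_v\leq 1$ (the opposite regime being handled trivially by enlarging $C$ beyond $8\pi k$).

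I do not expect any genuine obstacle: the corollary is essentially an immediate reformulation of the structural estimates already established in Theorem \ref{thm:gauge}. The only mildly nontrivial input is the topological observation above that the connectedness of $\Om_j$ together with the hole-in-ball description of $\Om_i$ confines $\Om_j$ to a single connected component of $S^2\setminus\Om_i$, which is what allows a single constant $c_{i,j}\in S^2$ to suffice uniformly on $\Om_j$.
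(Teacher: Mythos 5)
Your proposal is correct and is exactly the derivation the paper intends: the paper treats the corollary as an immediate consequence of Theorem \ref{thm:gauge}, and your argument fills in precisely those details — \eqref{claim:thm-gauge-1} combined with \eqref{claim:thm-gauge-2} for the first claim, and the oscillation plus weighted-area bounds on $S^2\setminus\Om_i$, $S^2\setminus\Om_j$ for the second. Your observation that one should invoke the theorem with exponent $2\al$ (so that the oscillation bound itself is $C\de_v^{2\al}$, since $\abs{\om_i-c_{i,j}}$ appears only to the first power) is a necessary and correctly handled point, as is the topological remark that the connectedness of $\Om_j$ confines it to a single component of $S^2\setminus\Om_i$.
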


These results are sharp since we can also prove

\begin{thm}\label{thm:2}
For any $k\geq 2$ there exists  a constant $c_0>0$ and a sequence $v_n$ of maps with degree $k$ for which $\de_{v_n}\to 0$ so that
\beq\label{claim:thm2}
\int_{S^2} \abs{\na \om}^2 \abs{\om-v_n}^2 \geq c_0 \de_{v_n} (1+\abs{\log\de_{v_n}})
\eeq 
for any $n\in \N$ and for any rational map $\om_n$ with $1\leq \deg(\om_n)\leq k$, and so that
\beq\label{claim:thm2-2}
\sum_i\int_{\Om_i^n} \abs{\na (\om_i^n-v_n)}^2  \geq c_0 \de_{v_n} (1+\abs{\log\de_{v_n}})
\eeq
for 
 any partition of $S^2$ into $j\geq 1$ sets $\Om_1^n,\ldots,\Om_j^n$ and any collection of 
rational maps $\om_1^n,\ldots,\om_j^n$ with $\deg(\om_i^n)\geq 1$ and $\sum \deg(\om_i^n)=k$. 
\end{thm}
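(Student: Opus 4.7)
The plan is to construct $v_n$ as outlined in Section \ref{sect:sharp}: fix a decomposition $k=k_0+k_1$ with $k_0,k_1\geq 1$, choose rational maps $\si_0:S^2\to S^2$ of degree $k_0$ and $\si:\C\to S^2$ of degree $k_1$ with $\si_0(z_0)\neq \si(\infty)$ at a gluing point $z_0\in S^2$, and define $v_n$ by gluing the concentrated bubble $\si(\mu_n(z-z_0))$ (at scale $1/\mu_n$) onto $\si_0$ via an $S^2$-valued interpolation in the intermediate neck annulus. Standard neck-energy calculations will give $\de_{v_n}\sim c/\log \mu_n$, so that $\de_{v_n}\to 0$ and $\abs{\log \de_{v_n}}\sim \log\log \mu_n$ as $\mu_n\to\infty$.

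For \eqref{claim:thm2}, the strategy is to show that for every rational map $\om=\om_n$ of degree $\ell\in[1,k]$, the weighted norm $\int_{S^2}\abs{\na \om}^2\abs{\om-v_n}^2\dvgz$ picks up a contribution of order $\de_{v_n}\abs{\log\de_{v_n}}$ from the neck region. The measure $\abs{\na \om}^2\dvgz$ has total mass $8\pi\ell$ and, by an $\eps$-regularity/bubble-tree argument for rational maps, concentrates at a bounded number of locations and scales. The neck of $v_n$ spans a log-scale window of width $\sim \abs{\log \de_{v_n}}$ on which $v_n$ executes its transition between $\si_0(z_0)$ and $\si(\infty)$; since $\om$ can ``follow'' this transition at only boundedly many log-scales, there must be a sub-window of log-width $\geqs \abs{\log \de_{v_n}}$ on which $\om$ is essentially constant while $v_n$ has already moved an $O(1)$ distance from $\om$'s value. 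Integrating $\abs{\na \om}^2\abs{v_n-\om}^2$ across this sub-window yields the claimed bound.

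The proof of \eqref{claim:thm2-2} is analogous. Any partition $\{\Om_i^n\}$ of $S^2$ assigns the bulk of the log-scale neck to some piece $\Om_{i_0}$, and on this piece the rational map $\om_{i_0}^n$ cannot simultaneously approximate $v_n$ near both the inner and outer boundaries of the neck, which sit at the vastly separated spatial scales $1/\mu_n$ and $\sim 1$. Comparing Dirichlet integrals over the log-scales of the neck, and using that the harmonic log-profile in $v_n$ cannot be mimicked by any rational profile (which must concentrate polynomially rather than logarithmically), produces the lower bound $c_0\de_{v_n}\abs{\log\de_{v_n}}$ on the sum.

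The main technical obstacle will be the case in which $\om$ (respectively $\om_{i_0}^n$) itself has a bubble structure matching $v_n$'s bubble near scale $1/\mu_n$. Here one must argue that even after this match the remaining degree-$\geq 1$ piece of $\om$ must either approximate $\si_0$ (and therefore miss the log-interpolation contribution on the neck as described above) or else deposit its energy at an unrelated scale (and therefore miss the body entirely). A careful trade-off argument, combined with the quantization of the Dirichlet energy on rational maps from $S^2$, should rule out all borderline configurations, and the factor $\abs{\log \de_{v_n}}$ emerges as an integration of a $\sim \de_{v_n}$ contribution across the $\sim \abs{\log \de_{v_n}}$ log-scales of the neck.
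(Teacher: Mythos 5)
Your construction is set up in the wrong parameter regime, and as a consequence both lower bounds fail for the maps you propose. You glue a bubble onto $\si_0$ with a \emph{fixed} target separation $\si_0(z_0)\neq\si(\infty)$, so that $\de_{v_n}\sim(\log\mu_n)^{-1}$ and the right-hand side of \eqref{claim:thm2} is $\sim\frac{\log\log\mu_n}{\log\mu_n}$. But then \eqref{claim:thm2} already fails for the single map $\om_n=\si_0$ (of degree $k_0\in[1,k-1]$): outside a disc of radius $\mu_n^{-d}$ around the gluing point one has $v_n=\si_0$, while on that disc $\abs{\na\si_0}^2=O(1)$ and $\abs{v_n-\si_0}=O(1)$, so $\int\abs{\na\si_0}^2\abs{v_n-\si_0}^2=O(\mu_n^{-2d})$, which is polynomially small and far below $\de_{v_n}\abs{\log\de_{v_n}}$. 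Likewise \eqref{claim:thm2-2} fails: the two-map bubble-tree collection $\{\si_0,\si(\mu_n\cdot)\}$ with the partition along $\abs{z}=\mu_n^{-1/2}$ achieves $\sum_i\int_{\Om_i}\abs{\na(v_n-\om_i)}^2=O(\de_{v_n})+O(\mu_n^{-2d})$, since the only non-negligible contribution is the neck energy of $v_n$ itself, which is $\sim a^2/\log\mu_n\sim\de_{v_n}$ and not $\de_{v_n}\abs{\log\de_{v_n}}$. Your heuristic for extracting the logarithm is also internally inconsistent: on the sub-window where you say ``$\om$ is essentially constant while $v_n$ has moved an $O(1)$ distance,'' the weight $\abs{\na\om}^2$ is essentially zero, so the integral $\int\abs{\na\om}^2\abs{v_n-\om}^2$ over that window contributes essentially nothing rather than the claimed amount.

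The paper's mechanism is the opposite of the one you describe: the logarithm does not arise by accumulating contributions over the $\sim\log\mu$ dyadic scales of the neck, but from making the \emph{energy defect} anomalously small relative to the distance. One must take the target separation $a$ comparable to the domain concentration scale, $a\sim\mu^{-1}$, and use the refined gluing of \cite{R-2bubbles} so that $\de_{v_{a,\mu}}\leq Ca^2/\log\mu=C\mu^{-2}(\log\mu)^{-1}$ \emph{without} the $O(\mu^{-1})$ error that a naive interpolation produces (with that error the argument is hopeless, since then $\de_v\abs{\log\de_v}\sim\mu^{-1}\log\mu\gg\mu^{-2}$). The distance lower bounds are then logarithm-free: $\geq c_0a^2=c_0\mu^{-2}$ for collections and for degree-$k$ maps (via the mean-value property of the holomorphic representative of $\om$ across the neck, which forces $\om$ to miss the $O(a)$ jump of $v$ between the circle averages at scales $\mu^{-1}$ and $1$), and $\geq c_0\mu^{-2}\log\mu$ for single maps of degree $<k$. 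Dividing $c_0\mu^{-2}$ by $\de_v\sim\mu^{-2}(\log\mu)^{-1}$ is what produces the factor $\abs{\log\de_v}$. To repair your proof you would need to (i) change the construction to $a_n\sim\mu_n^{-1}$ with the sharp gluing estimate on $\de_{v_n}$, and (ii) replace the neck-accumulation argument by a genuine rigidity obstruction such as the constancy of circle averages of holomorphic functions, together with the compactness/bubbling analysis needed to reduce to the case where $\om_n$ resembles either the base or the bubble.
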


As Corollary \ref{cor} is an immediate consequence of Theorem \ref{thm:gauge}, we hence deduce 
that the rigidity estimates obtained in all of the above results, that is Theorems \ref{thm:3} and \ref{thm:gauge} as well as Corollary \ref{cor}, are sharp. 

We will obtain such maps $v_n$ by carefully gluing a highly concentrated bubble $\si_a(\pi(\mu z))$, $\pi:\C\to S^2$ the inverse stereographic projection, onto a base map $\si_0$ at a point $p$. We will choose $\si_a$ and $\si_0$ so that the distance between the images $\si_0(p)$ and $\si_a(\pi(\infty))$ at the points where we glue is  $a\sim  \mu^{-1}$, i.e.~so that this distance in the target is of the same order as the radius of the ball in the domain on which the bubble is essentially concentrated. Such maps will have distance of order $a\sim \mu^{-1}$ from any collection of rational maps, but can be constructed in a way that their energy defect tends to zero at a rate of 
$O(a^2\log(\mu)^{-1})=O(\mu^{-2}(\log\mu)^{-1})$ as $\mu\to \infty$, 
 see Section \ref{sect:sharp} for details.

Outline of the paper: \\
We will prove Theorems \ref{thm:3} and \ref{thm:gauge} by combining a \Loj estimate with a rather delicate flow argument in which we evolve the given map with a weighted flow that uses weighted domain metrics  that are adapted to the energy distribution of the given map $v$. We give an overview of this argument in Section \ref{sec:overview} and then carry out the detailed analysis in the subsequent Section \ref{sec:details}. This argument uses a new  \Loj estimate that involves such weighted domain metrics which is stated in Proposition \ref{prop:1} and is proven later on in Section \ref{sec:Prop}, and we will prove Theorem \ref{thm:2} in Section \ref{sect:sharp}.

\textbf{Acknowledgement:} The author would like to thank Peter Topping for interesting conversations on this topic. 

\section{Overview of the proofs of Theorems \ref{thm:3} and \ref{thm:gauge} }
\label{sec:overview}
In the present section we give an overview of the key arguments that lead to the proof of the rigidity estimates claimed in our main results.  

Since the claims of these results are trivially true for maps whose energy defect is bounded away from zero it suffices to consider maps with 
\beq
\label{ass:De_E_small}
\de_v\leq \bar\de \text{ for some } \bar \de=\bar\de(k,\al)\in (0,\thalf) ,
\eeq
$k\in \N$ and $\al<\infty$ the fixed numbers for which we want to establish our main results.
In the following we hence use the convention that all arguments and results are to be understood to hold for maps $v$ satisfying \eqref{ass:De_E_small} for a sufficiently small such $\bar \de=\bar \de(k,\al)$.

To explain the basic idea of our proof we first recall 
that 
the $L^2(S^2,g)$ gradient of the Dirichlet energy of a map $u:S^2\to S^2\hookrightarrow \R^3$ with respect to a domain metric $g$ is described by the so called tension field which can be computed as 
$$\tau_g(u)=\Delta_g u+\abs{\na_g u}_{g}^2 u$$
if we view $u$ as a map into the surrounding Euclidean space. 
We stress that unlike the energy this quantity and the corresponding $L^2$ norm are not conformally invariant but rather scale according to 
 $\tau_g(u)= \rho^{-2}\tau_{\gz}(u)$
 and $\norm{\tau_g(u)}_{L^2(S^2,g)}=\norm{\rho^{-1}\tau_\gz}_{L^2(S^2,\gz)}$ for $g=\rho^2 \gz$. 

We recall that
 \Loj estimates for maps $v:S^2\to S^2$ of the form  
\beq
\label{est:Loj-standard}
\de_v\leq C \norm{\tau_{g_{S^2}}(v)}^2_{L^2(S^2,\gz)}
\eeq
 that involve the tension with respect to the standard metric $g_{S^2}$ on $S^2$
were established in \cite{Topping-rigidity} by Topping, who then also used these estimate in \cite{Topping-deg-1} to give a
proof of the quantitative estimates \eqref{est:quant-deg-1} for degree $1$ maps. 

The basic idea of the proof in \cite{Topping-deg-1} is to first pull back a given degree $1$ map $v$ 
by a M\"obius transform to make it balanced in the sense that $\int_{S^2} v \dvgz=0$, and to then use \eqref{est:Loj-standard} to show that the corresponding solution of 
the classical harmonic map flow 
 \beq
\label{def:HMF}
\partial_tu=\tau_{g_{S^2}}(u), \quad u(0)=v \eeq
 remains smooth for all times and converges smoothly to a limiting harmonic map $u_\infty$ with uniformly bounded energy density. A key point of that proof is that for degree $1$ maps with small $\de_v$ it is impossible that energy concentrates at multiple points or at multiple scales, so working in a gauge in which the map is balanced excludes the possibility that energy is concentrated on any small ball.

This is no longer the case for maps of degree $k\geq 2$. While we would like to follow a similar strategy in that we want to turn \Loj estimates into rigidity estimates by using a gradient flow, we hence cannot expect the proof from the degree $1$ case \cite{Topping-deg-1} to work as energy can be concentrated at multiple points and at multiple scales. 

For general maps a balancing condition as considered in \cite{Topping-deg-1} is hence neither sufficient to exclude the formation of singularities of the standard harmonic map flow nor to gain sufficient control on the energy density of the limiting harmonic map of that flow. Indeed, as observed above,  
we cannot expect such maps $v$ to be close to any single rational map $\om$ even if their energy defect is very small, so  cannot hope to deform $v$ to  such a limit using any single flow, unless we know a priori that the energy of $v$ is not concentrating at very different scales, compare Remark \ref{rmk:simple-case} below.

Instead we will associate to each given map $v$ a collection of weighted domain metrics $g_i=\rho_i^2 g_{S^2}$ on $S^2$ where the chosen weights $\rho_i^2$  reflect the distribution of the energy of $u_0$, with different metrics corresponding to different regions of the domain on which energy concentrates.  
For each such metric we  then consider the weighted harmonic map flow 
\beq
\label{def:HMF-w-g}
\pt u=\tau_{g_i}(u)=\rho_i^{-2} \tau_{g_{S^2}}(u)
\eeq
with initial map $u_0=v$. 
These weighted metrics will be obtained by combining the pull-backs of the standard metric $g_{S^2}$ by  M\"obius transforms which rescale certain regions on which the energy of $u_0$ is highly concentrated to unit size. 

Using a weighted flow that reflects the properties of the initial map $u_0$ turns out to have several major advantages: 
On the one hand we will control the velocity $\pt u$ in a weighted $L^2$ sense and hence obtain much stronger bounds on the distance  of $u_0$ to the resulting limiting harmonic map $u_\infty$ on regions where the weight is large, i.e.~on regions where $u_0$ was highly concentrated.
Working with a weighted metric will also give us improved control on the formation of singularities of the flow and, just as importantly, will allow us to prove the bound \eqref{claim:thm-gauge-1} 
on the energy density of the limiting harmonic map claimed in Theorem \ref{thm:gauge}, which in turn will be crucial also in the proof of Theorem \ref{thm:3}.

To be able to carry out this approach we however need \Loj estimates that involve the tension with respect to such weighted metrics. Such estimates do not follow from \eqref{est:Loj-standard} as 
we will need to work with respect to metrics $g=\rho^2\gz$ for which the conformal factor will be very large on the small 
regions that we rescale and for which
$ \norm{\tau_g(u)}_{L^2(S^2,g)}=\norm{\rho^{-1}\tau_\gz(u)}_{L^2(S^2,\gz)}$ can thus be far smaller than the norm of the standard tension. 

To prove our main results we hence need to establish a new \Loj estimate and in Section  \ref{sec:Prop} we will  prove the following proposition that might be of independent interest.

\begin{prop}
\label{prop:1}
For any $d\in \N$ there exist constants $C>0$ and $\eps>0$ so that the following holds true 
for any  collections of no more than $d$ points $p_i\in S^2$ and radii $r_i \in (0,\frac{\pi}2)$. 
Let $M_{p_i,r_i}:S^2\to S^2$ be the M\"obiustransform that corresponds to a dilation centred at $p_i$ that scales $B_{r_i}(p_i)$ up to a hemisphere, let $\rho_{p_i,r_i}=\frac{1}{\sqrt2} \abs{\na M_{p_i,r_i}} $ be the corresponding conformal factors and let 
\beq \label{def:g}
g:= \gz+\sum_i M_{p_i,r_i}^*g_{S^2}=(1+\sum_i \rho_{p_i,r_i}^2) \gz.
\eeq
Then 
for any map $u:S^2\to S^2$ with $\abs{\deg(u)}\leq d$
for which 
\beq
\label{ass:Loj-est}
\de_{u} \prod_i(1+\abs{\log r_i})\leq \eps\eeq
we have
\beq \label{est:Loj-weighted}
\de_{u}\leq C(1+\max\abs{\log r_i}) \norm{\tau_g(u)}_{L^2(S^2,g)}^2.
\eeq
\end{prop}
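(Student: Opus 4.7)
My plan is to argue by contradiction/compactness, reducing the weighted estimate to applications of the standard unweighted \Loj inequality \eqref{est:Loj-standard} at each of the given scales. The key observation is that the smallness hypothesis \eqref{ass:Loj-est} is strong enough to force every actual bubble of a limiting sequence to coincide with one of the given scales $(p_i,r_i)$, so that after pulling back by the corresponding M\"obius transform the rescaled map $v_i:=u\circ M_{p_i,r_i}$ sits in the compact regime in which \eqref{est:Loj-standard} can legitimately be invoked.

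More concretely, suppose for contradiction there is a sequence $u_n$ with scale collections $(p_i^n,r_i^n)_{i=1}^{N_n}$, $N_n\leq d$, that satisfies \eqref{ass:Loj-est} with $\eps_n\to 0$ but violates \eqref{est:Loj-weighted} with constants tending to infinity. Since $\delta_{u_n}\to 0$, the usual bubble tree theory gives, after a subsequence, a limit configuration of finitely many rational bubbles at specific points and scales. A combinatorial step, using that adjoining one further $(p,r)$ to the collection costs only one extra factor $1+|\log r|$, lets us assume that every actual bubble of $u_n$ is captured by one of the given $(p_i^n,r_i^n)$. For each $i$, after rebalancing by a further M\"obius transformation, the pullback $v_i^n:=u_n\circ M_{p_i^n,r_i^n}$ is close to a single rational map on one hemisphere, with all remaining bubbles at either unit or vanishing stereographic scale; the standard estimate \eqref{est:Loj-standard} applied to $v_i^n$ then yields $\delta_{v_i^n}\leq C\,\|\tau_{\gz}(v_i^n)\|_{L^2(\gz)}^2$. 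The conformal change of variable $y=M_{p_i^n,r_i^n}^{-1}(x)$ relates this unweighted pullback tension to the weighted tension on the original sphere, and combined with a partition of $S^2$ into the regions where each $\rho_{p_i^n,r_i^n}^2$ dominates in the sum defining $g$, one obtains a lower bound of the form $\|\tau_{g_n}(u_n)\|_{L^2(g_n)}^2\gtrsim \frac{1}{d+1}\sum_i \|\tau_{\gz}(v_i^n)\|_{L^2(\gz)}^2$ up to neck corrections. Finally, reassembling $\delta_{u_n}$ as a sum of the bubble defects $\delta_{v_i^n}$ plus the annular neck contributions between consecutive scales, where each neck is controlled by $(1+|\log r_i^n|)\,\|\tau_{\gz}(v_i^n)\|^2$ via the classical energy estimate on annuli, yields $\delta_{u_n}\leq C(1+\max_i|\log r_i^n|)\,\|\tau_{g_n}(u_n)\|_{L^2(g_n)}^2$, contradicting the violation.

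The main obstacle will be the neck accounting. The decomposition of $\delta_u$ scale by scale is not exact: it leaves transition terms supported on the annular regions between consecutive bubble scales, which can be controlled only if $u$ is close enough to its bubble skeleton that the deviation on each annulus is captured by a linearised analysis on that annulus. This is exactly what the smallness assumption in \eqref{ass:Loj-est} ensures. The asymmetry between the \emph{product} $\prod_i(1+|\log r_i|)$ in the hypothesis and the \emph{maximum} $\max_i(1+|\log r_i|)$ in the conclusion reflects that ruling out hidden concentration at every intermediate scale is a multiplicative condition, while the actual energy loss on the necks is dominated by the single worst scale. The logarithm in the conclusion is truly necessary, as witnessed by the sharpness example in Theorem \ref{thm:2}.
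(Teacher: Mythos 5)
There is a genuine gap, and it sits exactly at the point your outline glosses over. The step that fails is the claimed lower bound
\begin{equation*}
\norm{\tau_{g}(u)}_{L^2(S^2,g)}^2\gtrsim \tfrac{1}{d+1}\sum_i \norm{\tau_{\gz}(v_i)}_{L^2(S^2,\gz)}^2 ,\qquad v_i:=u\circ M_{p_i,r_i}^{-1}.
\end{equation*}
Since $\norm{\tau_g(u)}_{L^2(S^2,g)}=\norm{(1+\sum_j\rho_{p_j,r_j}^2)^{-1/2}\tau_{\gz}(u)}_{L^2(S^2,\gz)}$ while $\norm{\tau_{\gz}(v_i)}_{L^2(\gz)}=\norm{\rho_{p_i,r_i}^{-1}\tau_{\gz}(u)}_{L^2(\gz)}$, your partition into regions where each $\rho_{p_i,r_i}^2$ dominates only controls the piece of $\norm{\tau_{\gz}(v_i)}^2$ living on the region where $\rho_{p_i,r_i}$ is the largest weight. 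On a much smaller concentration ball $B_{r_j}(p_j)$ with $r_j\ll r_i$ the weighted tension carries the factor $\rho_{p_j,r_j}^{-2}\sim r_j^2$, which can be arbitrarily smaller than the factor $\rho_{p_i,r_i}^{-2}$ entering $\norm{\tau_{\gz}(v_i)}^2$ there; so $\norm{\tau_{\gz}(v_i)}^2_{L^2(\gz)}$ is \emph{not} bounded by $C\norm{\tau_g(u)}^2_{L^2(g)}$, and you cannot legitimately invoke \eqref{est:Loj-standard} for $v_i$, which needs the full tension norm. This is precisely the difficulty the proposition is designed to overcome, so the reduction to the unweighted estimate scale by scale does not close. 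A second, related gap is the neck accounting: there is no ``classical energy estimate on annuli'' bounding the energy defect carried by a neck by $(1+\abs{\log r})\norm{\tau}^2$ for an arbitrary (non-harmonic, non-Palais--Smale) map, nor is the decomposition of $\de_u$ into bubble defects plus neck contributions exact for such maps; this assertion is essentially the content of the proposition itself, so as written the argument is circular at its core. (The appeal to bubble-tree compactness for a sequence of arbitrary maps with $\de_{u_n}\to0$ also needs justification, though small defect does give almost-(anti)holomorphy and this is the least serious of the three issues.)

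For comparison, the paper's proof is direct rather than by compactness: it reduces to bounding the holomorphic energy $E_\partial(u)$ via the splitting $\de_u=2E_\partial(u)$ for nonpositive degree, uses Topping's pointwise representation of $u_z$ as a Riesz potential of $\abs{\tau_{\C}(u)}$ and of $\abs{u_z}^2$, and then decomposes the disc into sets $A_J$ obtained by intersecting dyadic annuli centred at the concentration points $b_i$. The factor $1+\max\abs{\log r_i}$ arises because each concentration point contributes $N_i\sim 1+\log\mu_i\sim 1+\abs{\log r_i}$ dyadic annuli, each of which contributes at most $C\norm{\tau_g(u)}_{L^2(S^2,g)}^2$, while the product $\prod_i(1+\abs{\log r_i})$ in the hypothesis arises as the total number of sets $A_J$, which is needed to absorb the quadratic term $C\La_g E_\partial(u)^2$. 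If you want to salvage a proof along your lines you would need to supply, for each scale, a localised version of the \Loj estimate that only uses the tension on the region where that scale's weight dominates together with boundary information on the separating annuli --- which is essentially what the paper's dyadic Riesz-potential analysis accomplishes.
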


Here and in the following we denote by $B_r(x)=\{y\in S^2: \dist_{\gz}(x,y)<r\}\subset S^2$ the geodesic ball with respect to the standard metric $\gz$ on the sphere $S^2\hookrightarrow \R^3$. 

We want to use these \Loj estimates to show that we can deform the given map $v$ into a harmonic maps whose distance from $v$ is of order $O(\de_v ^\half \abs{\log\de_v}^\half)$ using a suitable weighted flow. To this end we will need to ensure that the potentially very large factor $(1+\max \abs{\log r_i})$ appearing in \eqref{est:Loj-weighted} is always controlled by $\abs{\log\de_v}$. We will hence only 
ever work with domain metrics which, when viewed in the right gauge  i.e.~pulled back by a (single) suitable M\"obius transform, can be written in the form 
\eqref{def:g} for radii satisfying
\beq
\label{ass:mui}
\abs{\log r_i}\leq C_1\abs{\log\de_{v}} \text{ for some fixed } C_1=C_1(k,\al).
\eeq

\begin{rmk}\label{rmk:simple-case}
Our argument simplifies significantly if 
 the scales at which energy of $v$ concentrates are not too different in the sense that 
there is a single M\"obius transform $M$ so that \eqref{ass:mui}
is satisfied for the radius of any ball which  contains a certain amount $\eps_1=\eps_1(k)>0$ of energy of
 $v\circ M^{-1}$. In this case we can simply scale up a suitable collection of such balls to unit size and evolve by the corresponding weighted flow in order to show that $v$ will be close to a single rational map $\om$. Of course, an even simpler case is if all these balls have radius of order one, which is e.g.~the case if one focuses on maps that are in a small neighbourhood of a compact set of rational maps in $H^1$ as done in \cite{Deng}, and in that special case already Topping's approach of evolving by the standard harmonic map flow carried out in \cite{Topping-deg-1} is sufficient to obtain \eqref{est:quant-deg-1}.
 \end{rmk}
For general maps we have to proceed with a lot more care since  the energy of $v$ can be concentrated at vastly different scales, and since we should not expect to find a single harmonic map $\om$ that describes $v$ at all these scales. Instead we want to determine a suitable collection of harmonic maps $\om_i$, each of which will capture the behaviour of $v$ at very different scales and on corresponding domains $\Om_i$, as described in our main results.

To extract each of the maps $\om_i$ will use 
the following three-step-procedure:

\textbf{Step 1:} 
 We first evolve $v$ with the weighted harmonic map flow for a domain metric $ g_i$ which is chosen so that the resulting limiting harmonic map $\tilde \om_i$, which will have  energy defect $\de_{\tilde \om_i}\leq \de_v$, is close to $v$ on the corresponding set $\Om_i$. 

 To achieve this we will work with a metric $g_i$ 
  which scales up all parts of  $\Om_i$ that contain a certain amount of energy to order $1$ and which involves some additional 
 weights that are highly concentrated outside of $\Om_i$ and that prevent energy from moving between $\Om_i$ and the other domains $\Om_j$, $j\neq i$, that will be captured by separate flows and that correspond to very different scales. 

\textbf{Step 2:}
While this first harmonic map  $\tilde \om_i$ will be well controlled on
$\Om_i$ 
it can still contain energy that is highly concentrated 
on $S^2\setminus \Om_i$ and that is unrelated to the behaviour of the initial map $v$ on $\Om_i$. In the second step we hence 
 cut out all of these highly concentrated parts to obtain a new map $\tilde v_i$ which is still close to the original map $v$ on $\Om_i$, still has very small energy defect, but now has very little energy outside of $\Om_i$. 
 
 \textbf{Step 3:}
This gives us a new initial map $\tilde v_i$ which is now so that  all regions that carry a certain amount of energy can be rescaled to unit size 
 without violating \eqref{ass:mui}. We can hence flow this 
 new map $\tilde v_i$ again, now with a flow that uses a metric $\tilde g_i\leq g_i$ which is essentially supported on $\Om_i$, 
  and prove that this second flow remains smooth for all times and  results in a limiting harmonic map $\om_i$ that has all of the required properties.

To ensure that we can carry out these arguments on suitable disjoint domains   $\Om_i$ and that we can capture the behaviour of $v$ at all relevant scales, we will extract these maps in the following order: 

We first pull back the given map $v$ to a map 
 $v_1$ for which at least some of the energy  appears at scale $1$ (rather than essentially all of it concentrating on very small balls). 

We then identify a collection of no more than $k$ balls $B_{s_i}(y_i)$, $i\in I_0=\{1,\ldots, J\}$, which describe the location and scales at which the energy of $v_1$ concentrates. We will see that such  balls can be chosen 
so that 
most of the energy of $v_1$ is contained in the union of balls $B_{\La s_i}(x_i)$ for some fixed $\La=\La(k)$, and so that each of these regions contributes  energy of about $4\pi k_i$ for numbers $k_i\geq 1$ with $\sum k_i=k$. 

In a first step we then extract a harmonic map that represents 
the behaviour of $v$ at scales which are not too small in this view point.
 To this end we distinguish between balls  whose radii  are bounded below by some large (but fixed) power of $\de_v$ and that can hence be captured in the current step and balls that are too small to be rescaled. 
 
This distinction will correspond to partitioning the set of all indices $I_0$ into a non-empty set $I_1\subset I_0$ and a (potentially empty) set $I_*=I_0\setminus I_1$ so that there are
$s_1^*$ and $S_1^*$ with
$$\abs{\log(s_1^*)}\leq C_1 \abs{\log\de_v} \text{ and } 
\max\{ s_i: i\in I_*\}\ll  s_1^*\ll S_1^*\ll \min\{ s_i: i\in I_1\},$$
where  $a \ll b$ means $a\leq C \de_v^{\al_1}b$ for some $C=C(k,\al)$ and a large $\al_1$. 

We will then carry out Step 1 using a metric $g_1$ that scales up all balls that correspond to indices in  $I_1$ to order one and that includes additional weights that correspond to balls around points $y_i$ for $i\in I_*$. These additional weights are obtained by rescaling balls  with the fixed radius $s_1^*$ rather than $s_i$ to ensure that \eqref{ass:mui} holds.  

Evolving with this flow then yields our first harmonic map $\tilde \om_1$, which might however be highly concentrated on the balls $B_{s_1^*}(y_i)$, $i\in I_*$. 

We will then group these highly concentrated balls into \textit{clusters} by partitioning $I_*$ into subsets $I_{*}^1,\ldots, I_*^{m_1}$  so that the collections of balls  $B_{S_1^*}(y_i)$, $i\in I_*^j$, with radius $S_1^*\gg s_1^*$ can be covered by pairwise disjoint balls $B_*^j$, $j=1,\ldots,m_1$, whose radii are of order $S_1^*$. 
 
We then define our first domain $\Om_1$ as 
 $\Om_1:=S^2\setminus \bigcup_{j=1}^{m_1} B_*^j$ 
 and cut out the highly concentrated parts of the map by 
 replacing $\tilde \om_1$ on each of these disjoint balls $ B_*^j $ with a map that has the same boundary values but very little energy. This then allows us 
to obtain the desired $\om_1$ by flowing again, as described in Step 3 above. 

Having hence extracted the first harmonic map $\om_1$ we then want to repeat this argument to extract all of the information that we have lost on the above clusters of balls $B_{s_i}(y_i)$, $i\in I_*$, on which $v_1$ is highly concentrated.

As the balls $B_*^a$ are disjoint, and as the radii of these balls are very large compared to the original radii $s_i\ll S_1^*$, $i\in I_*$,
we will be able to carry out this argument separately for each cluster. Indeed, we will see that if we rescale any of these very small balls $B_{s_i}(y_i)$ to order $1$ then all parts of the domain that we have either already captured or that correspond to another cluster will be scaled down to balls which are so small that they are guaranteed to be cut out when we repeat the above 3-step-argument in this new gauge. 
 
 For each 
 $B_*^a$, $a\in \{1,\ldots,m_1\}$, we can thus extract the
next \textit{layer of harmonic maps} $\om_i^a$, which will correspond to disjoint 
index sets $I_i^{a}\subset I_*^a$ and 
disjoint domains $\Om_i^a\subset B_*^a$. These new domains $\Om_i^a$ will in turn be 
obtained from subsets of $B_*^a$ 
from which a further (potentially empty) set of even smaller balls $B_{**,i}^{a,b}$ is cut out. These new balls $B_{**,i}^{a,b}$ then correspond to a  
second level of clusters of highly concentrated regions from which we continue to extract harmonic maps using the same arguments. 

This procedure will not only ensure that the resulting subsets $\Om_\beta$ of $S^2$ and $I_\beta$ of $I_0$ partition $S^2$ and $I_0$ into disjoint sets, but will also allow us to prove that these sets $\Om_\beta$ and the extracted harmonic maps $\om_\beta$ all have the following key properties: 
\begin{enumerate}[label=(K\arabic*)]
\item 
\label{key:1} 
The degree of $\om_\beta$ is given by $\deg(\om_\beta)=\sum_{i\in I_\beta} k_i\geq 1$.
\item 
\label{key:2}
The energy density of $\om_\beta$ is controlled pointwise by 
$
\abs{\na \om_\beta}^2\leq C \rho_{\beta}^2 
$
for a conformal factor $\rho_{\beta}$ that is obtained by rescaling 
a collection of no more than $\deg(\om_\beta)$ balls in $\Om_{I_\beta}$ to hemispheres.
\item 
\label{key:3}
We can bound 
$\int_{\Om_\beta} \rho_{\beta}^2 \abs{\om_\beta-v}^2 \dvgz\leq C \de_v \abs{\log\de_v}.$
 \item 
 \label{key:4} The oscillation of $ \om_\beta$ over each connected component of  $S^2\setminus \Om_\beta$ is bounded by $C\de_v^\al$ and we have
   $\int_{S^2\setminus \Om_\beta}\rho_\beta^2 \dvgz\leq C  \de_{v}^{2\al}$.
 \end{enumerate}

Property \ref{key:1} and the fact that $\sum_{I_0} k_i=k$ ensure that once 
 we have captured all of the balls $B_{s_i}(y_i)$, $i\in I_0$, we indeed have a collection of rational maps with total degree $k$. The other key properties \ref{key:2}-\ref{key:4} then establish all of the claims made in our second main result Theorem \ref{thm:gauge}. 

The details of all of these arguments are carried out in 
 the following Section \ref{sec:details} which starts with a description of properties of the weighted harmonic map flow and of rational maps,  see Sections \ref{subsec:flows} and \ref{subsec:rational-main}. We  
then explain in Section 
 \ref{subsec:balls} how to choose the required collection of balls $B_{s_i}(y_i)$, before carrying out all of the  details of the construction of the first harmonic map $\om_1$ and of the proofs of the above key properties \ref{key:1}-\ref{key:4} for this map in Section \ref{subseq:first-step}. In the subsequent Section \ref{subsec:next-step} we then explain how these arguments can be modified to extract all further harmonic maps $\om_i$ and to obtain the required properties \ref{key:1}-\ref{key:4} for these $\om_i$. 
At this stage we will then have completed the proof of Theorem \ref{thm:gauge} and we finally explain how this result implies our first main result Theorem \ref{thm:3} in Section \ref{subsec:last-step}.

\section{Detailed construction and analysis of the harmonic maps $\om_i$}
\label{sec:details}
\subsection{Evolving maps by weighted harmonic map flows}
\label{subsec:flows} $ $\\
In the following we will evolve our given map $v$, respectively suitable rescalings and modifications $u_0$ of $v$, by weighted harmonic map flows. 

To this end we first recall that the results of Struwe \cite{Struwe} ensure that for any
metric $g$ on $S^2$ and any initial map $u_0\in H^1(S^2,S^2)$ there exists a global weak solution $u$ of \eqref{def:HMF-w-g} which is smooth away from finitely many points in space-time where harmonic maps  bubble off. We also recall that along this solution the energy decays according to $\ddt E(u(t))=-\norm{\tau_g(u)}_{L^2(S^2,g)}^2$ away from the singular times 
where the energy drops by $\sum E(\si^j)$, $\si^j$ the harmonic maps that bubble off along a sequence $t_i\upto T$, compare \cite{DT, QingTian, LinWang}. 

We will only ever consider initial maps with positive degree and note that if the degrees $\deg(\si^j)$ of all bubbles that develop at a singular time are also positive then the energy defect remains continuous across the singular time since the change in the contribution of $4\pi \deg(u)$ caused by the change in degree of the map on the one hand and the loss of the energy on the other hand cancel out.

\newcommand{\sign}{\text{sign}}
Conversely, if we had any bubble with negative degree then $\de_{u(t)}$ would need to drop by at least $8\pi$, which in our arguments is impossible as we evolve maps with small energy defect.

In the following we will always deal with flows which converge in $L^2(S^2,g)$ to a (unique) limit $u(\infty)$ as $t\to\infty$ and the above argument also ensures that $\de_{u(\infty)}=\lim_{t\to \infty}\de_{u(t)}$. 

The energy defect hence decreases along the flow according to 
\beq
\label{eq:ddt-energy-defect}
\de_{u(t)}=\de_{u(0)}-\int_{0}^{t}\norm{\tau_g(u)}_{L^2(S^2,g)}^2 \text{ for all } t\in [0,\infty]. \eeq
As in \cite{Simon} we now want to  combine the evolution equation for the energy defect with a \Loj estimate to obtain an upper bound on the total distance that the flow travels.

To be able to apply Proposition \ref{prop:1} and to obtain a suitable bound for the right hand side of \eqref{est:Loj-weighted} we only ever want to consider weighted harmonic map flows for metrics $g$ as in \eqref{def:g} for which \eqref{ass:mui} is satisfied. 
As we only need to consider maps with small energy defect this also ensures that \eqref{ass:Loj-est} is satisfied along the flow. Hence 
 Proposition \ref{prop:1} yields
 \beq 
 \de_{u(t)}\leq C \abs{\log(\de_{u_0})} \norm{\tau_g(u(t))}_{L^2(S^2,g)}^2 \text{ for all } t.
 \eeq
We conclude that
 \beqs-\tfrac{d}{dt} (\de_{u(t)})^\half =\thalf (\de_{u(t)})^{\mhalf }\norm{\tau_{g}(u(t))}_{L^2(S^2,g)}^2\geq c \norm{\tau_{g}(u)}_{L^2(S^2,g)} \abs{\log \de_{u_0}}^{-\half} \eeqs
 for some $c=c(k,\al)>0$. We thus obtain
 that any such flow converges in $L^2(S^2,g)$ 
 as $t\to \infty$ to a (unique) limiting harmonic map $u(\infty)$ for which 
\beq 
\label{est:distance-limit-flow}
\norm{u_0-u(\infty)}_{L^2(S^2,g)}\leq \int_0^\infty \norm{\tau_{g}(u)}_{L^2(S^2,g)}\leq C \de_{u_0}^\half \abs{\log(\de_{u_0})}^\half=:C\xi_{u_0}.
\eeq
where here and in the following it is useful to abbreviate 
\beq
\label{def:xi}
\xi_u:=(\de_{u})^{\half} \abs{\log(\de_{u})}^\half.
\eeq
We also remark that the $L^2$ convergence of the flow, combined with the $C^k$ estimates that the maps $u(t)$ satisfy away from the singular set, compare \cite{Struwe}, ensures that $u(t)$ converges smoothly locally to $u(\infty)$ away from the points at which the flow becomes singular as $t\to \infty$.

In addition to obtaining a bound on the distance between the initial map and the limiting harmonic map we will also use \eqref{est:distance-limit-flow} to control the evolution of local energies. 

Given a cut-off function $\phi\in C^\infty(S^2,[0,1])$ we let
\beq \label{def:cut off-energy} 
E_\phi(t):= \half \int_{S^2} \phi^2 \abs{d u}_g^2 dv_g
\eeq
and use that such cut-off energies evolve according to 
\beqa
\ddt E_\phi(t)= \norm{\phi\tau_g(u)}_{L^2(S^2,g)}^2
+\int_{S^2} \phi \langle d \phi, d u\rangle_g \tau_g(u) dv_g
\eeqa
away from the finite singular set, compare also \cite{Topping-winding}.
From \eqref{eq:ddt-energy-defect} and \eqref{est:distance-limit-flow} we hence immediately obtain that if $[t_1,t_2]\times \supp(\phi)$ is disjoint from the singular set then 
\beqa \label{est:energy-cut}
\abs{E_\phi(t_2)-E_{\phi}(t_1)}&\leq \de_{u_0}+C \norm{d \phi}_{L^\infty(S^2,g)} \de_{u_0}^\half \abs{\log(\de_{u_0})}^\half
\\
&\leq C(1+\norm{d \phi}_{L^\infty(S^2,g)})\xi_{u_0}.
\eeqa 
We note that if $\supp(\phi)$ contains no point at which the flow becomes singular as $t\to \infty$ then this estimate also holds for 
 $t_2=\infty$, i.e.~for the cut-off energy $E_\phi(\infty)$ of the limiting harmonic map $u(\infty)$. 

We note that the lower bound on $E_\phi(t_2)$ that one can obtain from \eqref{est:energy-cut} will not hold if a bubble develops on $[t_1,t_2]\times \supp(\phi)$ as this will cause a drop in energy. Conversely the corresponding upper bound on $E_\phi(t_2)$  remains valid so we can always use that
\beq
\label{est:energy-cut-upper} 
E_\phi(t)\leq E_\phi(0)+ C(1+\norm{d \phi}_{L^\infty(S^2,g)}) \xi_{u_0}
\eeq
both 
for all finite $t$ and, by the weak lower semicontinuity of the energy, also for $t=\infty$. 

We will apply these estimates for cut-off functions $\phi$ that are constructed from 
 functions $(\varphi_{x,r})_{x\in S^2,r\in(0,\frac\pi2]}$  which are so that $\varphi_{x,r}\in C^\infty_0(B_{r}(x))$ satisfies $\varphi_{x,r}\equiv 1$ on $B_{r/2}(x)$ and $\norm{d\varphi_{x,r}}_{L^\infty(S^2,g_{S^2})}\leq C{r}^{-1}$ for some universal $C$. While $\norm{d\varphi_{x,r}}_{L^\infty(S^2,g_{S^2})}$ will be large for $r>0$ small,  we will crucially use that the corresponding quantity is well controlled if we work with respect to a suitable weighted metric. To be more precise, as
  $\rho_{\bar x,\bar r}$ 
  is of order $(\bar r)^{-1}$ at points whose distance from $\bar x$ is bounded by a multiple of $\bar r$, compare Lemma \ref{lemma:dilation}, we immediately get

\begin{lemma}
\label{lemma:cut offs}
For any numbers $\La\geq 1$ and any $d>0$ there exists a constant $C$ so
that the following holds true for any $\bar x\in S^2$ and any $\bar r\in (0,\pi/2]$. 

Let 
$\rho_{\bar x,\bar r}$ be the conformal factor of the M\"obius transform that corresponds to a dilation at $\bar x$ that scales $B_{\bar r}(x)$  up to a hemisphere. Then for any $x\in B_{\La \bar r}(\bar x) $ and $r\in (d \bar r,\bar r)$ we have 
\beq\label{est:cut off}
\norm{d \phi_{x,r}}_{L^\infty(S^2,\rho_{\bar x,\bar r}^2g_{S^2})}\leq C.
\eeq
\end{lemma}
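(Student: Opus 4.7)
The plan is to unwind the weighted $L^\infty$ norm, localize the support of $d\phi_{x,r}$, and then invoke the pointwise lower bound on $\rho_{\bar x,\bar r}$ supplied by Lemma \ref{lemma:dilation}. The key conceptual point is that under a conformal rescaling $\tilde g=\rho^2 g_0$ one has $|\alpha|_{\tilde g}=\rho^{-1}|\alpha|_{g_0}$ on one-forms, so the claim reduces to the pointwise estimate
$$\sup_{S^2}\rho_{\bar x,\bar r}^{-1}\,|d\phi_{x,r}|_{g_{S^2}}\leq C(\Lambda,d).$$

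First I would localize. Because $\phi_{x,r}\in C^\infty_0(B_r(x))$ with $\phi_{x,r}\equiv 1$ on $B_{r/2}(x)$, the one-form $d\phi_{x,r}$ is supported in $B_r(x)\setminus B_{r/2}(x)$. Since $x\in B_{\Lambda\bar r}(\bar x)$ and $r\leq\bar r$, the triangle inequality gives $B_r(x)\subset B_{(\Lambda+1)\bar r}(\bar x)$, so it suffices to estimate the quantity on the enlarged ball $B_{(\Lambda+1)\bar r}(\bar x)$.

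Next I would combine the two pointwise ingredients on this ball. Using the standing bound $\|d\phi_{x,r}\|_{L^\infty(S^2,g_{S^2})}\leq C/r$ together with the lower bound $r>d\bar r$ yields
$$|d\phi_{x,r}|_{g_{S^2}}\leq \frac{C}{d\,\bar r}.$$
On the other hand, the Möbius transform $M_{\bar x,\bar r}$ is, in stereographic coordinates centred at $\bar x$, a dilation by $\lambda\sim \bar r^{-1}$, so its conformal factor has the explicit form $\rho_{\bar x,\bar r}(y)=\lambda(1+|z|^2)/(1+\lambda^2|z|^2)$ with $z=\pi^{-1}(y)$. From this (or directly from Lemma \ref{lemma:dilation}) one reads off that
$$\rho_{\bar x,\bar r}(y)\geq c(\Lambda)\,\bar r^{-1}\qquad\text{for every }y\in B_{(\Lambda+1)\bar r}(\bar x),$$
where $c(\Lambda)>0$ is independent of $\bar x$ and $\bar r$; in the degenerate regime $(\Lambda+1)\bar r\gtrsim \pi$, the radius $\bar r$ is itself bounded below by a constant depending on $\Lambda$ and the statement is equivalent to $\rho_{\bar x,\bar r}\gtrsim 1$, which also follows from the explicit formula since then $\lambda\sim 1$.

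Putting these together on the support of $d\phi_{x,r}$ gives
$$\rho_{\bar x,\bar r}^{-1}\,|d\phi_{x,r}|_{g_{S^2}}\leq \frac{\bar r}{c(\Lambda)}\cdot\frac{C}{d\,\bar r}=\frac{C}{c(\Lambda)\,d},$$
which is the desired constant depending only on $\Lambda$ and $d$. There is no real obstacle: the only content beyond bookkeeping is the lower bound $\rho_{\bar x,\bar r}\gtrsim \bar r^{-1}$ on a fixed multiple of $B_{\bar r}(\bar x)$, which is the essence of Lemma \ref{lemma:dilation} and is an elementary consequence of writing $M_{\bar x,\bar r}$ as a dilation in stereographic coordinates.
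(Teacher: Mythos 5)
Your argument is correct and is exactly the paper's: the paper deduces the lemma "immediately" from the standing bound $\norm{d\varphi_{x,r}}_{L^\infty(S^2,g_{S^2})}\leq Cr^{-1}$ together with the fact (estimate \eqref{est:rho-approx} of Lemma \ref{lemma:dilation}) that $\rho_{\bar x,\bar r}\geq c(\La)\,\bar r^{-1}$ at points whose distance from $\bar x$ is bounded by a fixed multiple of $\bar r$, combined via $\abs{d\phi}_{\rho^2 g_{S^2}}=\rho^{-1}\abs{d\phi}_{g_{S^2}}$. Your treatment of the degenerate regime $(\La+1)\bar r\gtrsim\pi$ is a harmless extra remark already subsumed by \eqref{est:rho-approx}.
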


This estimate of course also applies if we work with respect to a metric 
$g$ as in \eqref{def:g} that contains $\rho_{\bar x,\bar r}$ as one of the weights, since 
$\abs{d\phi}_{\rho^2 g_{S^2}}= \rho^{-1} \abs{d\phi}_{g_{S^2}}\leq \abs{d\phi}_{\tilde \rho^2 g_{S^2}}$ whenever $\rho\geq \tilde \rho$.

\subsection{Controlling the obtained harmonic maps}
\label{subsec:rational-main} $ $\\
In addition to these properties of the harmonic map flow 
 we 
will exploit that 
for maps $h:S^2\to S^2$ which are harmonic, and hence given by meromorphic functions in either $z$ or $\bar z$, 
we can turn 
estimates on the energy on suitable regions into pointwise control on the energy density. Namely, we will use the following lemma, a proof of which is included in the appendix

\begin{lemma}
\label{lemma:rational}
For any $k\in \N$ and $d>0$ there exist constants $\eps_1=\eps_1(k)>0$ and $C=C(k,d)$ so that the following holds true: 

Let $h:S^2\to S^2$ be any rational map with $\abs{\deg(v)}\leq k$ and suppose that we have a non-empty collection $(B_{R_i}(z_i))_{i\in J_1}$  and a further, possibly empty, collection
 $(B_{R_i}(z_i))_{i\in J_2}$
  of in total $\abs{J_1}+\abs{J_2}\leq k$ balls 
which contain most of the energy of $h$ in the sense that 
\beq
\label{ass:lemma-rat-1}
E(h,S^2\setminus \bigcup_{i\in J_1\cup J_2} B_{R_i}(z_i))\leq 2\eps_1
\eeq 
and which have the following properties: 
For each $i\in J_2$ we have 
\beq
\label{ass:lemma-rat-3}
E(h,B_{2R_i}(z_i)\setminus B_{R_i}(z_i))\leq 2\eps_1 
\eeq
while for each $x\in \bigcup_{i\in J_1} B_{4R_i}(z_i)$ there exists $j\in J_1$ so that
\beq\label{ass:lemma-rat-2} 
 x\in  B_{d^{-1} R_j}(z_j)\text{ and }
E(h,B_{d R_j}(x)\setminus \bigcup_{i\in J_2} B_{R_i}(z_i))\leq 2\eps_1 .
\eeq
Then the energy density of $h$ is controlled by 
\beq
\label{claim:lemma-rat}
\abs{\na h}^2\leq C\sum_{i\in J_1\cup J_2} \rho_{z_i,R_i}^2 \text{ on }  S^2\setminus \bigcup_{i\in J_2} B_{4R_i}(z_i).
\eeq
\end{lemma}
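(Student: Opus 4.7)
My plan is to combine the $\eps$-regularity theorem for harmonic maps with a conformal rescaling argument to reduce to computations at unit scale. Recall that there exist constants $\eps_1 = \eps_1(k)$ and $C_0 = C_0(k)$ such that if $u:S^2\to S^2$ is a harmonic map of degree at most $k$ and $B_r(x) \subset S^2$ satisfies $E(u, B_r(x))\leq \eps_1$, then $|\nabla u|^2(x) \leq C_0 r^{-2} E(u, B_r(x))$; this standard $\eps$-regularity will be the main tool. The goal is, for each $x \in S^2 \setminus \bigcup_{i \in J_2} B_{4R_i}(z_i)$, to produce a ball $B_{r(x)}(x)$ (possibly after conformally rescaling $h$) on which $E(h, B_{r(x)}(x)) \leq 2\eps_1$ and $r(x)^{-2} \leq C \sum_i \rho_{z_i, R_i}^2(x)$.

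First I would handle the case $x \in \bigcup_{j \in J_1} B_{4R_j}(z_j)$. Hypothesis \eqref{ass:lemma-rat-2} furnishes an index $j \in J_1$ with $x \in B_{d^{-1}R_j}(z_j)$, which by Lemma \ref{lemma:dilation} forces $\rho_{z_j, R_j}(x) \geq cR_j^{-1}$, and such that $E(h, B_{dR_j}(x) \setminus \bigcup_{i \in J_2} B_{R_i}(z_i)) \leq 2\eps_1$. Since $d(x, z_i) \geq 4R_i$ for every $i \in J_2$ and there are at most $k$ such indices, a pigeonhole/covering argument produces a radius $r \in (c(k,d)R_j, dR_j)$ for which $B_r(x) \subset B_{dR_j}(x) \setminus \bigcup_{i \in J_2} B_{R_i}(z_i)$. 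Applying $\eps$-regularity on this ball yields $|\nabla h|^2(x) \leq C R_j^{-2} \leq C \rho_{z_j, R_j}^2(x)$, as desired.

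The remaining case, where $x$ lies outside every $B_{4R_j}(z_j)$ with $j \in J_1$, is more delicate. A direct application of $\eps$-regularity on the largest ball around $x$ that avoids $\bigcup_i B_{R_i}(z_i)$ only yields $|\nabla h|^2(x) \leq C r_0^{-2}$ with $r_0$ of order $\min_i d(x, z_i)$; but the summands in the target $\sum_i \rho_{z_i, R_i}^2(x)$ have size $R_i^2/d(x,z_i)^4$, which decay much faster when $R_i \ll d(x, z_i)$, so the direct bound is too crude. I would instead pass to the conformally rescaled map $\tilde h := h \circ M_{z_{j^*}, R_{j^*}}^{-1}$, where $j^* \in J_1 \cup J_2$ is chosen to maximise $\rho_{z_{j^*}, R_{j^*}}(x)$. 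Since $\tilde h$ is still rational of degree at most $k$ and the Dirichlet energy is conformally invariant, the hypotheses transfer to corresponding statements for $\tilde h$ involving the rescaled balls $M_{z_{j^*}, R_{j^*}}(B_{R_i}(z_i))$, and the choice of $j^*$ places the image point $\tilde x := M_{z_{j^*}, R_{j^*}}(x)$ in a region of bounded diameter with respect to $\gz$. A reapplication of the first-case argument now to $\tilde h$ at $\tilde x$ yields $|\nabla \tilde h|^2(\tilde x) \leq C$; pulling back through the conformal factor gives $|\nabla h|^2(x) = \rho_{z_{j^*}, R_{j^*}}^2(x) |\nabla \tilde h|^2(\tilde x) \leq C \sum_i \rho_{z_i, R_i}^2(x)$.

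The main obstacle lies in the geometric bookkeeping required to close the loop in this rescaling step: one must verify that the rescaled balls for $i \neq j^*$ fit into $J_1$- or $J_2$-type roles for $\tilde h$ in a manner that the hypotheses remain valid and $\tilde x$ indeed falls into the Case 1 region just treated. The annular smallness condition \eqref{ass:lemma-rat-3} for $J_2$ balls is precisely what permits reclassifying a $J_2$ ball as a (small-scale) $J_1$ ball after rescaling when its image is of unit size, while the bound $|J_1| + |J_2| \leq k$ guarantees that the pigeonhole and covering arguments terminate with constants depending only on $k$ and $d$.
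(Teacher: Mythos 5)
Your strategy---a pointwise gradient bound at each $x$ via small-energy $\eps$-regularity on a well-chosen ball, after a conformal change of gauge---cannot produce the right-hand side of \eqref{claim:lemma-rat}, and both of your cases share the same gap. For $i\in J_2$ and $x$ with $\dist(x,z_i)\geq 4R_i$, the target quantity is $\rho_{z_i,R_i}^2(x)\sim R_i^2\,\dist(x,z_i)^{-4}$ by \eqref{est:rho-approx}: it decays like the \emph{fourth} power of the distance. Any argument based on $\eps$-regularity on a ball $B_r(x)$ must take $r\lesssim \dist(x,z_i)-R_i$, because the energy of $h$ \emph{inside} the $J_2$ balls is not assumed small (in the application these carry energy $\approx 4\pi k_i$), and then yields at best $\abs{\na h}^2(x)\leq C\eps_1\,\dist(x,z_i)^{-2}$, which exceeds $\rho_{z_i,R_i}^2(x)$ by the unbounded factor $(\dist(x,z_i)/R_i)^2$. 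Concretely, in your Case 1 the claimed radius $r\in(c(k,d)R_j,dR_j)$ need not exist: a single $J_2$ ball with $R_l\ll R_j$ sitting at distance $4R_l\ll R_j$ from $x$ forces $r\leq 3R_l$, and this configuration is exactly what occurs in the paper's application (the $J_2$ balls are the highly concentrated clusters of radius $\sim s_1^*$ lying inside much larger $J_1$ balls). The Case 2 rescaling does not repair this: after applying $M_{z_{j^*},R_{j^*}}$ the offending small $J_2$ ball remains small relative to its distance from $\tilde x$, so the same loss reappears at unit scale. Nor is the quartic decay a luxury: in \eqref{est:rat-circles}--\eqref{est:osc-Bj} the paper needs precisely $\rho_{y_i,s_1^*}\lesssim s_1^*\dist^{-2}\sim \de_v^{\al_1}(R_*^j)^{-1}$ on $\partial B_*^j$; an inverse-square bound would only give $(R_*^j)^{-1}$ and the oscillation estimate would fail.

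The paper obtains the quartic decay from complex analysis rather than elliptic regularity. It enlarges the $J_2$ balls slightly to radii $\hat R_i\in[R_i,2R_i]$ chosen so that the angular energy, hence the oscillation, of $h$ on $\partial B_{\hat R_i}(z_i)$ is $O(\sqrt{\eps_1})$; it then works on the connected component $U_y$ of $B_{\hat r}(y)\setminus \bigcup_{J_2}B_{\hat R_i}(z_i)$ containing $y$, picks a point $p_1$ omitted by $h$ on $U_y$ with $h(\partial U_y)$ avoiding a fixed-size ball around $p_1$, and passes to stereographic coordinates centred at $-p_1$ in which $h$ becomes a \emph{holomorphic} function that is bounded on $\partial U_y$. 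The Cauchy integral formula for the derivative then bounds $\abs{\na h(y)}$ by $\int_{\partial U_y}\abs{w}^{-2}dS$, and the contribution of each boundary circle $\partial B_{\hat R_i}(z_i)$ is (length)$\times$(inverse-square distance) $\sim \hat R_i\,\dist(y,z_i)^{-2}\sim \rho_{z_i,R_i}(y)$ --- exactly the claimed bound after squaring. To salvage a real-variable approach you would need an input recognising that the energy in a distant $J_2$ ball is organised as a full bubble whose influence decays like a M\"obius conformal factor; that is what holomorphy supplies and what $\eps$-regularity, which only sees the total energy on a ball, cannot.
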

\begin{rmk}\label{rmk:rho-convention}
Here we use the convention that $\rho_{z,R}$ denotes the conformal factor of the M\"obius transform $M$ which is chosen as the dilation that scales the ball $B_R(z)$ up the corresponding hemisphere if $R<\pi/2$ while for $R\geq \pi/2$ we set $\rho_{z,R}=1$. 
\end{rmk}

On the one hand, we will apply this lemma in Step 2 of our 3-step-argument to show that we can 
cut out the highly concentrated regions of the harmonic map $\tilde \om_i$  without increasing the energy defect by more than $C\de_v^{\al_i}\leq  \de_v$. On the other hand, we will also use this lemma in Step 3 to prove that the energy density of the final harmonic map $\om_i$ indeed satisfies the crucial estimate \eqref{claim:thm-gauge-1}. 

In both of these arguments the first collection  of balls will correspond to the balls in the domain $\Om_i$ that we rescale. 
For the application in Step 2 the set $J_2$ 
will correspond to the additional weights we use to prevent energy from flowing between $\Om_i$ and its complement, while we will be able to choose $J_2$ to be the empty set when we apply this lemma in the third step and will hence obtain pointwise bounds on $\om_i$ that apply on all of $S^2$.

\subsection{Selection of a suitable collection of balls}
\label{subsec:balls} $ $\\
We first need to select suitable 
 regions of the domain on which a certain amount of energy concentrates.
To this end we fix $\eps_1=\eps_1(k)\in (0, \frac{\pi}{4(k+1)^2})$ so that 
 Lemma \ref{lemma:rational} holds, fix $\eps_2=\eps_2(k)>0$ so that Lemma \ref{lemma:balls} below applies and 
let  $\La_0:= 2^{\frac{4\pi k+1}{\eps_2}+3} $. We stress that these constants only depend on $k$ and hence that all constants $C$ that appear in the rest of this subsection also just depend on $k$.

Given a map $v$ with  degree $k\geq 2$ which satisfies \eqref{ass:De_E_small}  we then let $r_1>0$
 be so that  
 \beq
\sup_{x\in S^2} E(v,B_{r_1}(x))=\eps_1,\eeq
 choose $x_1\in S^2$ so that
 $E(v,B_{r_1}(x_1))=\eps_1$ and note that our choice of 
$\La_0$ ensures that we can always
pick a $\la_1\in [1, \La_0]$ so that 
\beq
\label{eq:la_1}
E(v,B_{2\la_1r_1}\setminus B_{\la_1r_1}(x_1))
\leq \eps_2.
\eeq
We then continue to 
 pick points $x_i$ and radii $r_i\geq r_{i-1}$ so that 
\beq
\label{eq:x_i}
E(v,B_{r_i}(x_i)\setminus \bigcup_{j\leq i-1} B_{\la_j r_j}(x_j))=\sup_{x\in S^2} E(v,B_{r_i}(x)\setminus \bigcup_{j\leq i-1} B_{\la_j r_j}(x_j))=\eps_1
\eeq
and factors  $\la_i\in [1,\La_0]$
so that 
\beq
\label{eq:la_i}
E(v,B_{2\la_ir_i}\setminus B_{\la_ir_i}(x_i))\leq \eps_2
\eeq until we get a collection of
such balls $B_{r_i}(x_i)$, $i\in I_0:=\{1,\ldots,J\}$ with
\beq
\label{est:rest-of-energy}
E(S^2\setminus \bigcup_{j\leq J} B_{\la_j r_j}(x_j))<\eps_1.\eeq 
Setting 
\beq
\label{def:F0-j} 
F_j^0:=\bigcup _{i\leq j} B_{\La_0r_i}(x_i) \text{ for  } j\in I_0 \text{ and } F_0^0=\emptyset \eeq
 we can hence 
 use in the following that  
\beq
\label{est:item0-2-1}
E(v,S^2\setminus F_{J}^0)\leq \eps_1
\eeq
 and   that
 \beq 
 \label{est:item0-2-2}
 E(v, B_{r_j}(x)\setminus F_{j-1}^0)\leq \eps_1 \text{ for every }x\in S^2 \text{ and } j=1,\ldots J.\eeq

While a priori the energy of $v$ on $B_{\la_i r_i}(x_i) \setminus \bigcup_{j\leq i-1} B_{\la_j r_j}(x_j)$ looks to be bounded below only by $\eps_1$, we indeed have
 \begin{lemma}
\label{lemma:balls}
There exists $\eps_2=\eps_2(k)>0$ so that if we carry out the above construction with this choice of $\eps_2$ then there exist positive numbers 
$k_i\in \N$ with $\sum_{i=1}^J k_i=k$
so that
\beq
\label{est:E-balls-4pi}
E(v,B_{\la_i r_i}(x_i) \setminus \bigcup_{j\leq i-1} B_{\la_j r_j}(x_j))\geq 4\pi k_i -\eps_1 \text{ for each } i\in I_0:= \{1,\ldots, J\}.
\eeq
In particular, $J\leq k$ and 
\beq
\label{est:E-balls-unit}
E(v,B_{\la_i r_i}(x_i) \setminus \bigcup_{j\leq i-1} B_{r_i}(x_j))\geq 4\pi k_i -i\eps_1 \text{ for each } i\in I_0.
\eeq
\end{lemma}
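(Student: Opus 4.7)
The plan is to associate to each cluster $A_i := B_{\la_i r_i}(x_i) \setminus \bigcup_{j<i} B_{\la_j r_j}(x_j)$ an integer local degree $d_i \in \Z$, and then exploit the classical inequality $E \geq 4\pi|\deg|$ together with additivity of degree and the smallness of the total energy defect to conclude $d_i = k_i \geq 1$ with $\sum_i k_i = k$.

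First I would use the small annular energy $E(v, B_{2\la_i r_i}(x_i) \setminus B_{\la_i r_i}(x_i)) \leq \eps_2$ provided by \eqref{eq:la_i}, together with a Courant--Lebesgue argument. Writing the energy in polar coordinates $(\rho,\theta)$ around $x_i$, the bound $\int_{\la_i r_i}^{2\la_i r_i}\rho^{-1}\int_{S^1}|\partial_\theta v|^2\,d\theta\,d\rho \leq 2\eps_2$ combined with the mean value theorem yields a good radius $\rho_i\in(\la_i r_i,2\la_i r_i)$ such that the oscillation of $v$ along $\partial B_{\rho_i}(x_i)$ is bounded by $\eta := C\eps_2^{1/2}$. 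I would then modify $v$ on the thin annulus $B_{\rho_i}(x_i)\setminus B_{\la_i r_i}(x_i)$ by radial interpolation to a common value, producing a map $\tilde v:S^2\to S^2$ which equals $v$ on every $B_{\la_i r_i}(x_i)$, is locally constant in a neighbourhood of each circle $\partial B_{\rho_i}(x_i)$, and satisfies $|E(\tilde v)-E(v)|\leq Ck\eps_2$ once $\eta$ is small enough in terms of $k$.

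Since $\tilde v$ is locally constant near every separating circle, each cluster $A_i$, as well as the outer set $A_\infty:=S^2\setminus\bigcup_i B_{\la_i r_i}(x_i)$, acquires a well-defined integer degree $d_i,\,d_\infty\in\Z$ with $\sum_i d_i+d_\infty = k$. The pointwise inequality $|\mathrm{Jac}(\tilde v)|\leq \tfrac{1}{2}|\na \tilde v|^2$ gives $E(\tilde v,A_i)\geq 4\pi|d_i|$ and $E(\tilde v,A_\infty)\geq 4\pi|d_\infty|$, so
\[ 4\pi\sum_i|d_i|+4\pi|d_\infty| \leq E(\tilde v) \leq 4\pi k+\de_v+Ck\eps_2. \]
For $\bar\de$ and $\eps_2$ small enough in terms of $k$, this is compatible with $\sum_i d_i+d_\infty=k$ only if all $d_i$, $d_\infty$ are nonnegative; moreover $E(v,A_\infty)<\eps_1<4\pi$ from \eqref{est:rest-of-energy} forces $d_\infty=0$, yielding $\sum_i d_i=k$. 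Positivity $d_i\geq 1$ then follows from the construction: the subset $B_{r_i}(x_i)\setminus\bigcup_{j<i}B_{\la_j r_j}(x_j)\subset A_i$ has energy exactly $\eps_1$ by \eqref{eq:x_i}, so if $d_i$ were $0$ the above budget would give $E(v,A_i)\leq \de_v+Ck\eps_2<\eps_1$, a contradiction. Setting $k_i:=d_i$ and arranging $Ck\eps_2\leq \eps_1$ proves \eqref{est:E-balls-4pi}, and $J\leq k$ is immediate from $\sum k_i=k$. The second claim \eqref{est:E-balls-unit} follows by writing
\[ E(v,B_{\la_i r_i}(x_i)\setminus \bigcup_{j<i} B_{r_i}(x_j)) \geq E(v,A_i) - \sum_{j<i} E(v, B_{r_i}(x_j)\setminus \bigcup_{l<i} B_{\la_l r_l}(x_l)), \]
using $A_i\cap B_{r_i}(x_j)\subset B_{r_i}(x_j)\setminus\bigcup_{l<i} B_{\la_l r_l}(x_l)$, and invoking the selection criterion \eqref{eq:x_i} to bound each of the $i-1$ subtracted terms by $\eps_1$.

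The hard part will be the bookkeeping when the sets $A_i$ have intricate topology: the balls $\{B_{\la_j r_j}(x_j)\}_j$ are in general neither disjoint nor nested, so each $A_i$ may have several boundary components, and the radial modifications on different annuli must be arranged so as not to interact. The saving feature is that after modification $\tilde v$ factors through a disjoint union of $2$-spheres (obtained by collapsing each good circle $\partial B_{\rho_i}(x_i)$ to a point), on which the local degrees $d_i$ are genuinely integer-valued and additive over the partition $\{A_1,\ldots,A_J,A_\infty\}$ of $S^2$.
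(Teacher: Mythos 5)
Your core argument is the same as the paper's: Courant--Lebesgue good circles extracted from the small annular energies \eqref{eq:la_i}, integer local degrees, the quantization $E\geq 4\pi\abs{\deg}$, additivity of degree plus smallness of $\de_v+Ck\eps_2$ to force all local degrees to be nonnegative and sum to $k$, the selected energy $\eps_1$ from \eqref{eq:x_i} to force positivity, and exactly the same subtraction argument (each subtracted term bounded by $\eps_1$ via \eqref{eq:x_i}) for \eqref{est:E-balls-unit}. The difference is organizational: the paper peels off one ball at a time, in order of increasing radius, building comparison maps $V_i,\tilde V_i$ with $\deg(V_i)+\deg(\tilde V_i)=\deg(\tilde V_{i-1})$ and carrying the accumulated error $(i-1)C_0\eps_2$ inductively, whereas you cut along all good circles at once and read off the degrees of the pieces of a single partition. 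Your version is arguably cleaner to state, but the inductive version exists precisely to avoid the point you flag and do not resolve.

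That point is a genuine gap as written. The balls $B_{\la_i r_i}(x_i)$ are neither disjoint nor nested, so the good circles $\partial B_{\rho_i}(x_i)$ can intersect one another and the interpolation annuli $B_{\rho_i}(x_i)\setminus B_{\la_i r_i}(x_i)$ can overlap; "radial interpolation to a common value" around $x_i$ and around $x_j$ then prescribe conflicting values on the overlap, and collapsing intersecting circles does not produce a disjoint union of spheres. A second, smaller mismatch: your degrees must be attached to the regions bounded by the good circles, $\tilde A_i=B_{\rho_i}(x_i)\setminus\bigcup_{j<i}B_{\rho_j}(x_j)$, not to the $A_i$ defined with the radii $\la_j r_j$; the discrepancy in energy is $\leq i\eps_2$ by \eqref{eq:la_i}, so this is harmless but should be said. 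The clean repair that keeps your global structure is to dispense with the modified map $\tilde v$ altogether: set $d_i:=\tfrac{1}{4\pi}\int_{\tilde A_i}v^*\omega$, note that each boundary component of $\tilde A_i$ is a concatenation of boundedly many (in terms of $k$) arcs of good circles and hence has oscillation $\leq C(k)\sqrt{\eps_2}$, and cap each such component by a disc of area $\leq C(k)\eps_2$ to conclude that $d_i$ lies within $C(k)\eps_2$ of an integer $k_i$ with $\abs{\int_{\tilde A_i}v^*\omega}\leq\int_{\tilde A_i}\abs{\na v}^2/2$. The rest of your counting then goes through verbatim. You should also note, as the paper does, that the index $i=J$ may correspond to a radius $\la_Jr_J\geq\pi/2$, for which the outer annulus degenerates; this case is handled by \eqref{est:rest-of-energy} and your $d_\infty=0$ step rather than by a good circle around $x_J$.
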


In the proof of this lemma, and also in  some later arguments, it can be convenient to exploit the conformal invariance of the energy to 
 view a map from  
a diadic annulus $B_{2R}\setminus B_R(x)$, $R<\pi/2$, on the sphere
instead as a map from the annulus 
 $\DD_{\tan(R)}\setminus \DD_{\tan(R/2)}$ in the plane, or, 
using the conformal map $re^{i\theta}\mapsto (\log r-\log(\tan R/2),\theta)$,
 as a map from the cylinder $[0,c_R]\times S^1$ for $c_R:= \log(\tan(R))-\log(\tan R/2) \geq \log2$.

\begin{proof}[Proof of Lemma \ref{lemma:balls}]
We note that the second estimate \eqref{est:E-balls-unit} is an immediate consequence of the first claim of the lemma and \eqref{eq:x_i}. 
We can hence focus on proving \eqref{est:E-balls-4pi} and for this first  consider indices $i\leq J-1$. As we can certainly assume that $\eps_2<\eps_1$ all of these indices correspond to radii 
 $\la_ir_i<\pi/2$ so we can view the restriction  of $v$ to $B_{2\la_ir_i}\setminus B_{\la_ir_i}(x_i)$ 
  as a map from  $[0,c_{\la_ir_i}]\times S^1$  as described above. From \eqref{eq:la_i} we know that there is 
$\hat s_i\in [0,c_{\la_ir_i}]$ with $\int_{S^1} \abs{\partial_\theta v(\hat s_i,\theta)}^2\leq 2(\log2)^{-1} \eps_2$. 
We can hence interpolate between $v(\hat s_i, \theta)$ and $\fint_{S^1} v(\hat s, \theta)$ on a unit cylinder in a way that requires energy no more than $C_0 \eps_2$ and that gives a map whose oscillation is no more than $C_0\sqrt{\eps_2}\leq \half$ where here and in the following $C_0$ stands for a universal constant that can change from line to line.  
Projecting this map onto $S^2$ hence gives a way to transition from $v(\hat s_i, \theta)$ to a constant using a map from a unit cylinder into $S^2$ of energy less than $C_0\eps_2$. 

To see that \eqref{est:E-balls-4pi} holds for $i=1$ we now let $\hat r_1\in [\la_1r_1,2\la_1r_1]$ be the radius that corresponds to $\hat s_1$ and use this map to transition from $v$ to a constant on the exterior of $ B_{\hat r_1}(x_1)$. This results in a map $V_1:S^2\to S^2$ which satisfies 
$$v=V_1 \text{ on }  B_{\hat r_1}(x_1) \text{ and } E(V_1,S^2\setminus B_{\hat r_1}(x_1))\leq C_0 \eps_2.$$
Similarly we can use the above map to also transition from $v$ to a constant  in the interior of $B_{\hat r_1}(x_1)$
to obtain a map $\tilde V_1:S^2\to S^2$ which satisfies 
$$\tilde V_1=v \text{ on } S^2\setminus B_{\hat r_1}(x_1) \text{ and } E(V_1,B_{\hat r_1}(x_1))\leq C_0 \eps_2.$$
As $C_0\eps_2$ is small, the parts of the maps that we glue in cannot change the degree so we must have that
 $\deg(V_1)+\deg(\tilde V_1)=\deg(v)$. Since $\de_v$ is small we can furthermore exclude the possibility that either of these degrees is negative and must thus have 
$$\de_{V_1}+\de_{\tilde V_1}\leq 
\de_v+C_0\eps_2\leq \bar\de+C_0 \eps_2
\leq \tfrac{\eps_1}{2k}$$
where the last estimate holds as we can choose $\eps_2=\eps_2(k):=\frac{\eps_1}{4kC_0}$ and restrict our attention to maps satisfying \eqref{ass:De_E_small} for some $\bar\de<\frac{\eps_1}{4k}$. 

Setting $k_1:=\deg(V_1)\geq 0$ we hence know that 
$$E(v, B_{\la_1 r_1}(x_1))\geq E(V_1)-E(V_1,S^2\setminus B_{\hat r_1}(x_1))\geq 4\pi k_1-C_0\eps_2\geq 4\pi k_1-\eps_1.
$$
This implies the claim for $i=1$ as the fact that $E(V_1)\geq
E(v,B_{r_1}(x))=\eps_1$ is strictly larger than $\de_{V_1}$ excludes the possibility that the degree 
$k_1$ of $V_1$ might be zero. 

We can then repeat this argument for $i=2,\ldots,J-1 $ with the map $\tilde V_{i-1}$ obtained in the previous step in the place of $v$: 
We get inductively that the energy defect of these maps is bounded by
$\de_{\tilde V_{i-1}}\leq \de_{v}+(i-1) C_0 \eps_2\leq \frac{i}{4k} \eps_1$, so will remain below $\frac{\eps_2}{2}$ while $i\leq k$. At the same time \eqref{eq:la_1} and  \eqref{eq:x_i}  ensure that for all such $i$ 
$$E(\tilde V_{i-1}, B_{\la_ir_i}(x_i))\geq 
E(v,  B_{\la_ir_i}(x_i)) \setminus \bigcup_{j\leq i-1} B_{2\la_j r_j}(x_j))\geq \eps_1-(i-1) \eps_2\geq \tfrac{3}{4}\eps_1.
$$ 
The resulting maps $V_i$ hence satisfy $E(V_i)>\de_{V_i}$ and must thus have positive degree. This ensures that the number of steps we have to carry out cannot exceed $k$ and that the above argument hence applies for all $i=1,\ldots, J-1$. It also ensures that $E(\tilde V_{J-1})\geq \frac34 \eps_1$ and hence that this final map $\tilde V_{J-1}$ must have positive degree which, combined with the smallness of the energy \eqref{est:rest-of-energy}
of $v$  on the complement of $\bigcup_{j\leq J} B_{\la_ir_i}(x_i)$, then immediately implies that the claim also holds for $i=J$.
\end{proof}

Having picked such a collection of balls $B_{r_i}(x_i)$ we now want to switch to a gauge in which we can extract the first harmonic map $\om_1$.

If the largest radius $r_J$ of the 
collection of balls $B_{r_i}(x_i)$, $i\in I_0:=\{1,\ldots,J\}$, obtained above is bounded away from zero by 
$r_J\geq\bar \La^{-1}\frac{\pi}{2}$ for 
\beq
\label{def:bar-La} 
\bar \La=\bar\La(k):= 4\La_0 (4K_0)^{4k}
\eeq
then we
work directly with $v_1=v$ and this collection of balls, so to keep the notation consistent with the second case set $s_i=r_i$ and $y_i=x_i$. 

Otherwise, we first want to switch viewpoint to ensure that at least one of the balls is of order $1$. Namely, if $r_J\bar \La< \frac{\pi}{2}$ then we use the  M\"obius transform
$M_1=M_{x_J,\bar \La r_J}:S^2\to S^2$ to scale $B_{\bar \La r_J }(x_J) $ up  to a hemisphere, which in turn scales $B_{r_J }(x_J) $
up to a ball $B_{s_J}(y_J)$ whose radius is bounded away from $0$ by a constant $c=c(k)>0$. 
Instead of working with $v$ we then consider the map 
$v_1=v\circ M_1^{-1}$  and the resulting collection 
 $B_{s_i}(y_i)=M_1(B_{r_i}(x_i))$  of balls. As the 
  claims of all of our results are invariant under M\"obius transforms, it suffices to establish them all for $v_1$. 

Rescaling the ball with maximal radius has the important advantage that we get energy estimates analogue to \eqref{est:item0-2-1} and \eqref{est:item0-2-2}. Namely, as the conformal factor $\rho_{M_1}$  is of comparable size at points whose distance is no more than $2\bar\La r_J$, compare Lemma \ref{lemma:dilation}, we have
\begin{lemma}
\label{lemma:new-balls}
There  exists a universal constant $K_0$ so that
we have bounds on the conformal factor of the above M\"obius transform $M=M_1$ of
\beq
\label{est:conf-factor}
K_0^{-1} \frac{s_i}{r_i} \leq \rho_M(x)\leq K_0 \frac{s_i}{r_i} 
\text{ on each } B_{2\bar\La r_i}(x_i), \quad i\in I_0\eeq
and can hence use that
\beq
\label{claim:ball-contained} 
B_{K_0^{-1} r\frac{s_i}{r_i}}(M(x))\subset M(B_{r}(x))\subset B_{K_0  r\frac{s_i}{r_i}}(M(x))\eeq
for each
$x\in B_{K_0^{-1}\bar \La r_i}(x_i)$, each $r\in (0,\bar \La r_i]$ and every $i\in I_0$.
Therefore the image $M(F_j^0)$, $j\in I_0$, of the set defined in \eqref{def:F0-j} is contained in 
\beq
\label{eq:F_j^1}
F_j^1:=\bigcup_{i\leq j} B_{K_0 \La_0 s_i}(y_i)
\eeq
 and 
the rescaled map $v_1$ satisfies energy estimates of the form 
 \beq\label{est:item1-2-2}
E(v_1,B_{K_0^{-1} s_j}(y)\setminus F_{j-1}^1)\leq \eps_1 \text{ for all } y \in B_{\bar\La K_0^{-2} s_j}(y_j) \text{ and } j\in I_0 \eeq
and 
\beq\label{est:item1-2-1}
E(v_1,S^2\setminus F_J^1)\leq \eps_1.
\eeq 
Additionally, 
 while the sequence of radii $s_i$ may no longer be ordered, we have 
\beq \label{est:item1-1}
s_i\leq K_0 s_j \text{ for all } i< j\text{ for which }B_{(2K_0)^{-2} \bar\La s_j}(y_j)\cap B_{(2K_0)^{-2} \bar\La s_i}(y_i)\neq \emptyset.\eeq
\end{lemma}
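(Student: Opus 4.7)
The strategy is to first establish the two-sided conformal factor bound \eqref{est:conf-factor} and then derive every remaining claim from it, using the conformal invariance of the Dirichlet energy together with the construction of the balls $B_{r_i}(x_i)$.

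To prove \eqref{est:conf-factor}, I would invoke Lemma \ref{lemma:dilation} applied to $M=M_{x_J,\bar\La r_J}$: passing to stereographic coordinates based at the antipode of $x_J$, the map $M$ acts as a planar dilation whose conformal factor $\rho_M$ is explicit and varies by at most a universal constant on any ball whose radius is dominated by the local length scale $\rho_M^{-1}$. Since $M$ sends $B_{r_i}(x_i)$ to $B_{s_i}(y_i)$, conformality forces $\rho_M(x_i)$ to be comparable to $s_i/r_i$, and the inequality $2\bar\La r_i\leq 2\bar\La r_J$ ensures that the whole ball $B_{2\bar\La r_i}(x_i)$ lies in the regime where $\rho_M$ is slowly varying, so the two-sided bound extends from $x_i$ to all of $B_{2\bar\La r_i}(x_i)$ at the cost of a universal constant $K_0$.

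The inclusion \eqref{claim:ball-contained} then follows by integrating $\rho_M$ along geodesics from $x$: for $x\in B_{K_0^{-1}\bar\La r_i}(x_i)$ and $r\leq\bar\La r_i$ the ball $B_r(x)$ lies inside $B_{2\bar\La r_i}(x_i)$, so by \eqref{est:conf-factor} spherical lengths transform by factors in $[K_0^{-1}s_i/r_i,K_0 s_i/r_i]$. Applied with $x=x_i$ and $r=\La_0 r_i$ (using $\La_0\leq\bar\La$), this gives $M(B_{\La_0 r_i}(x_i))\subset B_{K_0\La_0 s_i}(y_i)$, and taking unions over $i\leq j$ yields $M(F_j^0)\subset F_j^1$, equivalently $S^2\setminus F_j^1\subset M(S^2\setminus F_j^0)$. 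The energy estimates \eqref{est:item1-2-1} and \eqref{est:item1-2-2} then reduce to their $v$-counterparts \eqref{est:item0-2-1} and \eqref{est:item0-2-2} by conformal invariance of $E$: for \eqref{est:item1-2-2}, given $y\in B_{\bar\La K_0^{-2}s_j}(y_j)$, the left half of \eqref{claim:ball-contained} (applied at $x_j$ with $r=\bar\La K_0^{-1} r_j$) places $x:=M^{-1}(y)$ inside $B_{K_0^{-1}\bar\La r_j}(x_j)$, while the right half (applied at $x$ with $r=r_j$) gives $M^{-1}(B_{K_0^{-1}s_j}(y))\subset B_{r_j}(x)$. Combined with the preimage containment $M^{-1}(S^2\setminus F_{j-1}^1)\subset S^2\setminus F_{j-1}^0$, this reduces the bound to \eqref{est:item0-2-2}, and \eqref{est:item1-2-1} follows in exactly the same way from \eqref{est:item0-2-1}.

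Finally, the near-ordering property \eqref{est:item1-1} is proved by contradiction. If $i<j$ and the two balls meet while $s_i>K_0 s_j$, then $d(y_i,y_j)\leq (2K_0)^{-2}\bar\La(s_i+s_j)\leq K_0^{-1}\bar\La s_i$, and the left half of \eqref{claim:ball-contained} places $y_j$ inside $M(B_{\bar\La r_i}(x_i))$, so $x_j\in B_{\bar\La r_i}(x_i)\subset B_{2\bar\La r_i}(x_i)$. Applying \eqref{est:conf-factor} at both $x_i$ and $x_j$ shows that $s_j/r_j$ and $s_i/r_i$ are comparable up to a factor of $K_0^2$, and using $r_i\leq r_j$ this forces $s_i\leq K_0^2 s_j$, contradicting the assumption once the $K_0$ appearing in \eqref{est:item1-1} is chosen at least as large as the square of the conformal factor constant. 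The main obstacle in the whole argument is to keep the universal constants $K_0$, $\bar\La$, $\La_0$ compatible through the five nested claims, so that a single $K_0$ can be chosen to absorb the constants arising from the various applications of the conformal factor bound.
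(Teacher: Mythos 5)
Your proposal is correct and follows essentially the same route as the paper: the two-sided bound \eqref{est:conf-factor} from Lemma \ref{lemma:dilation} and the maximality of $r_J$, then \eqref{claim:ball-contained} by comparing lengths, conformal invariance of the energy to transfer \eqref{est:item0-2-1}--\eqref{est:item0-2-2}, and the two-sided conformal factor bound at a point common to $B_{2\bar\La r_i}(x_i)$ and $B_{2\bar\La r_j}(x_j)$ together with $r_i\leq r_j$ for \eqref{est:item1-1}. The only cosmetic differences are your contradiction framing and the slightly lossier constant $K_0^2$ in the last step, which, as you note, is absorbed by enlarging the universal constant.
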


\begin{proof}
As we rescale a ball with radius $\bar \La r_J$ we know that the relation \eqref{est:rho-uniform} between the supremum and the infimum of $\rho_M$ over a ball can be applied to any ball with radius no more than $2\bar\La r_J$, so as $r_J$ is maximal, in particular on the balls $B_{2\bar\La r_i}(x_i)$, $i\in I_0$. 

As $M$ scales the diameter of $B_{r_i}(x_i)$ with a factor of $\frac{s_i}{r_i}$ we hence immediately obtain that
$$\tfrac{s_i}{r_i}\leq \sup_{B_{2\bar \La r_i}(x_i)} \rho_M \leq K_0 \inf_{B_{2\bar \La r_i}(x_i)}\rho_M\leq K_0\tfrac{s_i}{r_i},$$
i.e.~the first claim. As this yields upper bounds on both $\rho_M(x)$ and $\rho_{M^{-1}} (M(x))=\rho_M(x)^{-1}$ we can then immediately deduce \eqref{claim:ball-contained}.

Applied for $x=x_i$ and 
$r= K_0\La_0 r_i< \bar \La r_i$ this gives 
 $M(B_{\La_0r_i}(x_i))\subset B_{K_0\La_0s_i}(y_i)$ and hence ensures that indeed 
$M(F_j^0)\subset F_j^1$. 
Thus, \eqref{est:item1-2-1} is an immediate consequence of the corresponding estimate \eqref{est:item0-2-1}. 

Similarly, 
\eqref{est:item1-2-2}  follows from  \eqref{est:item0-2-2} and the fact that \eqref{claim:ball-contained} ensures that such balls $B_{K_0^{-1} s_j}(y)$
 correspond to balls $M^{-1}(B_{K_0^{-1} s_j}(y))$ in the previous viewpoint
  whose radii can be no more than $r_j$.

Finally, 
as \eqref{claim:ball-contained} ensures that 
$M^{-1}(B_{(2K_0)^{-2}\bar\La s_l}(y_l))\subset B_{(4K_0)^{-1} \bar \La r_l}(x_l)$ we must have that 
$\dist(x_i,x_j)\leq (4K_0)^{-1} \bar \La (r_i+r_j)$ for 
any indices $i$ and $j$ for which these balls are not disjoint. 
If $i<j$ and thus $r_i\leq r_j$ this 
ensures that $B_{r_i}(x_i)$ is contained in the set where \eqref{est:conf-factor} gives $\rho_M\leq K_0\frac{s_j}{r_j}$ so we must have that $s_i\leq  K_0\frac{s_j}{r_j}r_i\leq K_0 s_j$. 
\end{proof}

\begin{rmk}
\label{rmk:rescaling}
In later steps we will need to consider further rescalings to extract harmonic maps from clusters of highly concentrated balls. In these arguments we only ever rescale balls $B_{s_{\tilde J}} (y_{\tilde J})$ to order one which are in a given 
cluster, have not yet been captured and are chosen so that their radius $s_{\tilde J}$ is maximal among the radii of all balls with these two properties. 
 \\
This will ensure that estimates of the above form will be valid also after further such rescalings, though now only for the smaller collection of balls that have the above two properties, and 
 whose radius was hence bounded by $s_{\tilde J}$ before the rescaling. Namely for this collection of balls all of the above estimates and inclusions still apply, except that 
 we have to  increase the power with which the factors $K_0$, $K_0^{-1}$ respectively $(2K_0)^{-2}$ appear in \eqref{eq:F_j^1}, 
 \eqref{est:item1-2-2} and \eqref{est:item1-1} 
with each additional rescaling.  
As the number of rescalings we have to carry out is bounded by $k$, and as $\bar \La$ was chosen as in \eqref{def:bar-La}, this however does not affect our arguments. 
\end{rmk}

While the above comment only applies to balls in the given cluster that have not yet been captured, rather than to the full set $I_0$ of indices, this will be sufficient for the proofs of our main results. Indeed all other regions of the domain will be mapped into a very small neighbourhood of the antipodal point $-y_{\tilde J}$ in the gauge in which we rescale $B_{s_{\tilde J}}(y_{\tilde J})$ to order one, and will hence be cut out in our procedure anyway.

\subsection{Capturing the first rational map $\om_1$}
\label{subseq:first-step}
$ $\\
In a first step we now want to capture the behaviour of $v_1$ on all balls whose scale $s_i$ is not too small. To this end we set $\al_1:=(4k)^k \al$, $\al>0$ the given exponent in our main results, and 
split 
 these balls into 
 a collection of balls $B_{s_i}(y_i)$, $i\in I_1$, that we want to capture in the current step, and a (potentially empty) collection $B_{s_i}(y_i)$, $i\in I_*:=I_0\setminus I_1$,  of highly concentrated balls in a way that ensures that
\beq
\label{def:I1}
\min_{I_1} s_i\geq c (\de_v)^{3k\al_1}\text{ for some } c=c(k)>0 \text{ while } \min_{I_1} s_i\geq (\de_v)^{-3\al_1} \max_{I_1^c}  s_i.
\eeq
We can e.g.~select such a $I_1$  by first adding the index $J$ of the maximal radius $s_{J}\geq c=c(k)>0$
 to $I_1$
 and then continuing to add indices by decreasing order of $s_i$ until we find an index for which the quotient between the radius we just picked and the next smaller radius is greater than $(\de_v)^{-3\al_1}$.

Having chosen $I_1$ in this way we then define
\beq
\label{def:r1-star}
s_1^*:= \de_v^{2\al_1} \min_{I_1} s_i \text{ and } S_1^*:= \de_v^{\al_1} \min_{I_1} s_i 
\eeq
and note that these radii are so that 
\beq
\label{est:radii-order}
\max_{I_*} s_i \ll s_1^*\ll S_1^*\ll \min_{I_1} s_i
\eeq
where each $a\ll b$ is to be understood as $a\leq C \de_v^{\al_1}$ for a constant $C=C(k,\al)$. 

We note that while $s_1^*$ will be very small, it will still satisfy 
\eqref{ass:mui} so we will be able to work with a metric $g_1$ which rescales the balls $B_{s_1^*}(y_i)$, $i\in I_*$ to unit size and also rescales the collection of balls $B_{s_i}(y_i)$, $i\in I_1$, in this way, compare \eqref{def:g_1} below.

This will result in a flow that is well controlled away from the highly concentrated set $F_{*,\frac14}$ which we define by
\beq 
\label{def:HC}
F_{*,\la}:= \bigcup_{j\in I_*} B_{\la s_1^*}(y_j) \text{ for factors } \la >0 .\eeq
Before we turn to the analysis of this flow, we first want to derive two more energy estimates that involve these sets $F_{*,\la}$ in the place of unions of balls $B_{s_i}(y_i)$, $i\in I_*$. To this end we note that
we must have that $j<i $
for all indices 
\beq\label{rel:indices}
i\in I_1 \text{ and } j\in I_* \text{ for which } B_{s_1^*}(y_j)\cap M(B_{\la_ir_i(x_i))})\neq \emptyset
\eeq
as \eqref{est:item1-1} would otherwise imply that  $s_i\leq K_0 s_j$ which is impossible since $s_j\ll s_1^*\ll s_i$. For such indices $i\in I_1$ and $j\in I_*$ we furthermore know from \eqref{est:conf-factor} that $\rho_{M^{-1}}\leq K_0 \frac{r_i}{s_i}$ on $ B_{s_1^*}(y_j)$ which allows us to deduce that
$M_1^{-1} (B_{2s_1^*}(y_j))$ is contained in a ball around $x_j$ whose radius is bounded by  $2 K_0s_1^* \frac{r_i}{s_i}$, and hence  much smaller than the radius $r_i$ of the ball $B_{r_i}(y_j)$ that we cut out of $B_{\la_ir_i}(x_i)$ in \eqref{est:E-balls-unit}. 

This lower bound \eqref{est:E-balls-unit} on the energy
from Lemma \ref{lemma:balls} hence ensures that 
 \beqa
 \label{est:Ev1-lower}
 E\big(v_1, M_1\big[B_{\la_i r_i}(x_i)\setminus\bigcup _{j\in I_1, j<i}B_{\la_i r_i}(x_i)\big] \setminus F_{*,2}\big)\geq 4\pi k_i-i\eps_1 \text{ for each }  i\in I_1,
\eeqa
 and thus that the energy of $v_1$ outside of the highly concentrated set $F_{*,2}$ is at least 
\beq
\label{eq:energy-main-1}
E(v_1, S^2\setminus F_{*,2})\geq 4\pi\sum_{i\in I_1}k_i -k^2\eps_1\geq  4\pi\sum_{i\in I_1}k_i-\tfrac{\pi}{4}.
\eeq
%
As $s_i\ll s_1^*$ for $i\in I_*$ we know that $M_1(\bigcup_{I_*}B_{\la r_i}(x_i))\subset F_{*,\frac14}$ and can hence also use 
 \eqref{est:E-balls-4pi} to obtain a 
 lower bound on the energy on the highly concentrated set of 
$$E(v_1,F_{*,\frac14})\geq E(v, \bigcup_{I_*}B_{\la r_i}(x_i))\geq 
 4\pi\sum_{I_*}k_i-k\eps_1$$
and hence an upper bound on the energy of $v_1$ away from $F_{*,\frac14}$ of 
\beq
\label{eq:energy-main-2}
E(v_1, S^2\setminus F_{*,\frac 14})\leq 4\pi\sum_{I_1}k_i +k\eps_1+\de_v\leq 4\pi\sum_{ I_1}k_i+\tfrac{\pi}{4}.
\eeq
We now consider the
weighted harmonic map flow for the domain metric 
\beq
\label{def:g_1}
g_1=\rho^2 \gz:= 
(1+\sum_{i\in I_1} \rho_{y_i,r_i}^2+\sum_{j\in I_{*}} \rho_{y_j,s_1^*}^2) g_{S^2}\eeq
which will be well controlled away from  $F_{*,\frac14}$ since we 
 were able to rescale subsets of $S^2\setminus F_{*,\frac14}$ with the right weights.
To be more precise, we show

\begin{lemma}\label{lemma:first-flow}
Let  $v_1$ and $g_1$ be as above,  let $F_{*,\la}$ be as defined in \eqref{def:HC} and set 
$$F_{I_1,\la}:= \bigcup_{i\in I_1} B_{\la K_0\La_0 s_i}(y_i) \text{ for } \la>0.$$ 
Then the solution $u$  of the weighted harmonic map flow
$$\partial_t u=\tau_{g_1}(u), \quad u(0)=v_1
$$ is so that 
\beq
\label{est:step1-complement}
E(u(t),S^2\setminus (F_{I_1,2}\cup F_{*,\frac14}))\leq \eps_1+C \xi_v \text{ for all } t\in [0,\infty]
\eeq
and so that for every $y\in F_{I_1,4} 
$ there exists $j\in I_1$ with
\beq
\label{est:step1-small-balls} y\in  B_{8K_0\La_0 s_j}(y_j) \text{ and } 
E(u(t),B_{(2K_0)^{-1} s_j}(y)\setminus F_{*,\frac14}) \leq \eps_1+C \xi_v \text{ for all } t\in [0,\infty].
\eeq
Hence all potential singularities of the flow, be it at finite time or as $t\to \infty$, must occur on the closure of the highly concentrated set $F_{*,\frac14}$ and we furthermore have
\beq
\label{est:step1-lower}
\abs{E(u(t), S^2\setminus F_{*,1})-4\pi\sum_{i\in I_1} k_i }\leq \tfrac\pi 4+C\xi_v \text{ for all } t\in [0,\infty].
\eeq
\end{lemma}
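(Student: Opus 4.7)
The plan is to instantiate the weighted harmonic map flow framework of Section~\ref{subsec:flows} for the specific metric $g_1$ and exploit the initial energy bounds on $v_1$ established in Section~\ref{subsec:balls}. First I would verify the hypothesis of Proposition~\ref{prop:1}: the radii $s_i$ for $i \in I_1$ and the single radius $s_1^*$ all satisfy $|\log s_i|, |\log s_1^*| \leq C|\log \de_v|$ by \eqref{def:I1} and \eqref{def:r1-star}, so the general machinery of Section~\ref{subsec:flows} applies and yields an $L^2(S^2,g_1)$-Cauchy flow starting at $v_1$, a unique limit $u(\infty)$ with travel distance $\|v_1-u(\infty)\|_{L^2(S^2,g_1)} \leq C\xi_v$, and the cut-off evolution estimates \eqref{est:energy-cut}--\eqref{est:energy-cut-upper}.

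The core of the argument is to choose cut-off functions with $\|d\phi\|_{L^\infty(S^2,g_1)} \leq C$ via Lemma~\ref{lemma:cut offs}, noting that the weight $\rho_{y_i,s_i}$ in $g_1$ is of order $s_i^{-1}$ on any $O(s_i)$-neighbourhood of $y_i$ for $i \in I_1$, and analogously $\rho_{y_j,s_1^*}\sim (s_1^*)^{-1}$ near $y_j$ for $j \in I_*$, so a cut-off transitioning at the matching scale has bounded gradient in $g_1$. For \eqref{est:step1-complement} I would take $\phi$ equal to $1$ on $S^2 \setminus (F_{I_1,2}\cup F_{*,1/4})$ and supported in $S^2\setminus(F_{I_1,3/2}\cup F_{*,1/8})$; since the latter set contains $F_J^1$ (using $K_0\La_0 s_j \ll s_1^*/8$ for $j \in I_*$), estimate \eqref{est:item1-2-1} yields $E_\phi(0)\leq \eps_1$, and \eqref{est:energy-cut-upper} immediately gives the claim. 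For \eqref{est:step1-small-balls} I would localise further by multiplying a radial bump supported in $B_{K_0^{-1} s_j}(y)$ by the $F_{*,1/4}$-removing cut-off just used; the resulting $\phi$ is still bounded in $g_1$ by Lemma~\ref{lemma:cut offs}, and \eqref{est:item1-2-2} gives the required initial bound. The no-singularity assertion then follows from standard $\eps$-regularity applied to the weighted flow on parabolic cylinders of bounded $g_1$-size in $S^2\setminus\overline{F_{*,1/4}}$: on each such cylinder the local energy stays below the $\eps$-regularity threshold once $\eps_1$ is chosen small enough.

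Finally, for the two-sided bound \eqref{est:step1-lower} I would apply the signed form of \eqref{est:energy-cut} twice. For the lower direction, take $\psi$ supported in $S^2\setminus F_{*,1}$ and equal to $1$ on $S^2\setminus F_{*,2}$; its support is disjoint from the singular set by the no-singularity claim, so \eqref{est:energy-cut} gives $E(u(t),S^2\setminus F_{*,1}) \geq E_\psi(t) \geq E_\psi(0) - C\xi_v \geq 4\pi\sum_{I_1}k_i -\tfrac{\pi}{4}- C\xi_v$ via \eqref{eq:energy-main-1}. The matching upper bound is obtained symmetrically by a cut-off $\phi$ supported in $S^2\setminus F_{*,1/4}$ and equal to $1$ on $S^2\setminus F_{*,1/2}$, combined with \eqref{eq:energy-main-2}. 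The main obstacle I anticipate is the combinatorial bookkeeping underlying the initial bound in \eqref{est:step1-small-balls}, namely checking that $F_{j-1}^1 \cap B_{K_0^{-1}s_j}(y)$ is contained (up to negligible parts) in $F_{*,1/4}$: for indices $i<j$ in $I_*$ this is automatic from $K_0\La_0 s_i \ll s_1^*/4$, but for $i<j$ in $I_1$ one has to select $j$ wisely and invoke the overlap-monotonicity \eqref{est:item1-1} together with the generous factor $\bar\La$ built into \eqref{def:bar-La} to prevent nearby balls $B_{K_0\La_0 s_i}(y_i)$ from contributing substantial energy in the small ball around $y$.
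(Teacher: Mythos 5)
Your proposal is correct and follows essentially the same route as the paper: cut-off energies with gradients bounded in the $g_1$-metric via Lemma \ref{lemma:cut offs}, the initial bounds \eqref{est:item1-2-1}--\eqref{est:item1-2-2}, exclusion of singularities outside $\overline{F_{*,1/4}}$ because local energies stay far below the $4\pi$ needed to form a bubble, and the two-sided estimate \eqref{est:energy-cut} combined with \eqref{eq:energy-main-1}--\eqref{eq:energy-main-2} for \eqref{est:step1-lower}. The one step you flag but do not execute --- choosing $j$ in \eqref{est:step1-small-balls} --- is resolved in the paper exactly as you anticipate, by taking $j$ minimal among indices in $I_1$ whose enlarged balls meet $B_{s_i}(y)$ and then using \eqref{est:item1-1} to get $K_0^{-1}s_j\leq s_i$, so that $B_{K_0^{-1}s_j}(y)$ avoids $F_{j-1}^1$ up to the part absorbed by $F_{*,1/8}$.
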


Here and in the following $C$ is allowed to depend on the fixed numbers $\al$ and $k$ unless specified otherwise and we continue to use the convention that 
all results and arguments are to be understood to hold provided $v$ satisfies \eqref{ass:De_E_small} for a sufficiently small number $\bar \de=\bar \de(k,\al)>0$. 

\begin{proof}
All of these claims are obtained by considering cut-off energies $E_\phi$ for functions $\phi$ that are constructed out of cut-off functions 
 $\phi_{x,r}\in C_{c}^\infty(B_{r}(x))$ with $\phi_{x,r}\equiv 1$ on $B_{r/2}(x)$ for which Lemma \ref{lemma:cut offs} provides uniform bounds on $\norm{d\phi}_{L^\infty(S^2,g_1)}$.
 
 Namely, to prove \eqref{est:step1-complement} we set
$$\phi:=\prod_{i\in I_1} (1-\phi_{y_i,2K_0\La_0 s_i})\prod_{i\in I_*}  
(1-\phi_{y_i,\frac14 s_1^*})
$$ 
and note that 
since $K_0\La_0\leq\bar\La(k)$ we can apply Lemma \ref{lemma:cut offs} to see that 
\beq
\label{est:norm-phi-1}
\norm{d\phi}_{L^\infty(S^2,g_1)} \leq C=C(k).
\eeq 
This cut-off function 
has support in 
$S^2\setminus (F_{I_1,1}\cup F_{*,\frac18})$ which we know to be  contained in $ S^2\setminus F_J^1$, since
 $K_0\La_0s_i\ll s_1^*$ if $i\in I_*$. 
The estimate \eqref{est:item1-2-1} obtained in Lemma \ref{lemma:new-balls} hence ensures that 
$E_\phi(0)\leq \eps_1$  and inserting this as well as 
\eqref{est:norm-phi-1} into the upper bound
\eqref{est:energy-cut-upper} on the evolution of the cut-off energy immediately implies that 
$$E_\phi(t)\leq \eps_1+C\xi_v \text{ for all } t\in [0,\infty].$$ 
As $\phi\equiv 1$ on $S^2\setminus (F_{I_1,2}\cup F_{*,\frac14})$ 
this yields the first claim \eqref{est:step1-complement} of the lemma.

Given 
$y\in F_{I_{1},4}$ we let $i\in I_1$ be so that $y\in B_{4K_0\La_0s_i}(y_i)$ and then choose $j\in I_1$ to be the minimal index in $I_1$ for which 
 $B_{s_i}(y)\cap  B_{4K_0\La_0 s_j}(y_j)\neq \emptyset$. 
As $j$ can be no larger than $i$ we obtain from \eqref{est:item1-1}  that $(K_0)^{-1} s_j\leq s_i$ and hence know that 
$B_{(K_0)^{-1} s_j}(y)\subset 
B_{s_i}(y)$ must be disjoint from any ball $ B_{4K_0\La_0 s_{a}}(y_{a})$ for indices $a\in I_1$ with $a<j$. Therefore
\beq
\label{rel:raining}
B_{K_0^{-1} s_j}(y)\setminus F_{j-1}^1= B_{K_0^{-1} s_j}(y)\setminus\bigcup_{l\leq j-1, l\in I_*} B_{K_0\La_0 s_{l}}(y_{l})\supset B_{K_0^{-1} s_j}(y)\setminus F_{*,\frac18} \eeq
where the second relation holds as $ s_l\ll s_1^*$ for  $l\in I_*$. 

Hence \eqref{est:item1-2-2} gives $E(v_1, B_{K_0^{-1}s_j}(y)\setminus F_{*,\frac18})\leq \eps_1$ and applying the above argument for a $\phi$ that is supported on this set and so that
$ \phi\equiv 1$ on the smaller set  $B_{(2K_0)^{-1} s_j}(y)\setminus F_{*,\frac14}$  yields the second claim \eqref{est:step1-small-balls} of the lemma.

As these energies are all bounded by  $\eps_1+C\de_v \leq 2\eps_1<\pi/4$
 and 
 as the formation of a bubble requires energy at least $4\pi$ we can hence exclude the possibility that any singularities, be it at finite or infinite time, form outside of the closure of the highly concentrated set $F_{*,\frac14}$.  

This now allows us to use the two sided estimate \eqref{est:energy-cut} for cut-off energies
that are supported outside of $F_{*,\frac14}$ 
 and hence to obtain the final claim \eqref{est:step1-lower} from the corresponding upper and lower bounds \eqref{eq:energy-main-1} and \eqref{eq:energy-main-2} on the energy of $v_1$.  
\end{proof}

This completes Step 1 of the construction of our first harmonic map $\om_1$. At this point we have a limiting harmonic map $\tilde \om_1=u(\infty)$ which satisfies the above energy estimates and is so that  
\beq
\label{123}
\de_{\tilde\om_1}\leq \de_{v} \text{ and } \norm{\tilde \om_1-v_1}_{L^2(S^2,g_1)}\leq C \xi_v\eeq
since it was obtained by a weighted flow for which \eqref{ass:mui} holds. 
To carry out Step 2, we now show that 

\begin{lemma}
\label{lemma:energy-density}
There exist numbers $\La$ and $C$ that only depend on $k$ and $\al$ so that the energy density of the obtained harmonic map $\tilde \om_1:S^2\to S^2$ is bounded by 
\beq\label{claim:step1-energy-density}
\abs{\na \tilde \om_1}^2\leq C(\sum_{I_1} \rho_{y_i,s_i}^2+\sum_{I_*}\rho_{y_i,s_1^*}^2) \text{ on  }  S^2\setminus F_{*,\La}
 \eeq
\end{lemma}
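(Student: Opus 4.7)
Since $\tilde\om_1=u(\infty)$ is the $L^2(S^2,g_1)$-limit of the weighted harmonic map flow with uniformly bounded energy, it is a harmonic and therefore rational map, so the plan is to deduce \eqref{claim:step1-energy-density} from Lemma \ref{lemma:rational} applied to $\tilde\om_1$. Concretely, we will apply Lemma \ref{lemma:rational} with $J_1:=I_1$, $J_2:=I_*$, centres $z_i:=y_i$, radii $R_i:=2K_0\La_0 s_i$ for $i\in I_1$ (so that $\bigcup_{i\in I_1}B_{R_i}(y_i)=F_{I_1,2}$), and radii $R_j\in[s_1^*,\La_*s_1^*]$ for $j\in I_*$ selected by a pigeonhole argument below, with $\La_*=\La_*(k,\al)$ a suitably large constant. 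The constant in the statement is then $\La:=4\La_*$, since the set excluded in the conclusion \eqref{claim:lemma-rat} is $\bigcup_{j\in I_*}B_{4R_j}(y_j)\subseteq F_{*,\La}$; and $\rho_{y_i,R_i}^2$ is comparable, with constants depending only on $k$ and $\La_*$, to $\rho_{y_i,s_i}^2$ for $i\in I_1$ and to $\rho_{y_j,s_1^*}^2$ for $j\in I_*$, so \eqref{claim:lemma-rat} will deliver \eqref{claim:step1-energy-density}.

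Conditions \eqref{ass:lemma-rat-1} and \eqref{ass:lemma-rat-2} of Lemma \ref{lemma:rational} follow from Lemma \ref{lemma:first-flow} together with the smallness of $\xi_v$ guaranteed by \eqref{ass:De_E_small}. Indeed the total energy bound \eqref{est:step1-complement}, applied with $t=\infty$, directly gives \eqref{ass:lemma-rat-1} since $F_{*,1/4}\subset\bigcup_{j\in I_*}B_{R_j}(y_j)$. For \eqref{ass:lemma-rat-2} we pick $d=d(k)>0$ small and use \eqref{est:step1-small-balls}: because $s_i\gg s_1^*\geq R_l\geq s_l$ for $i\in I_1$, $l\in I_*$, every set of the form $B_{dR_i}(x)\setminus\bigcup_{l\in I_*}B_{R_l}(y_l)$ with $x\in B_{4R_i}(y_i)$ lies inside a ball $B_{(2K_0)^{-1}s_{j'}}(y)\setminus F_{*,1/4}$ provided by \eqref{est:step1-small-balls} for some $j'\in I_1$ and $y$ near $x$, giving the required bound.

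The main obstacle is the annulus condition \eqref{ass:lemma-rat-3}: for each $j\in I_*$ we must find $R_j\in[s_1^*,\La_*s_1^*]$ with $E(\tilde\om_1,B_{2R_j}(y_j)\setminus B_{R_j}(y_j))\leq 2\eps_1$. We select $R_j$ by a pigeonhole argument over dyadic scales: among the $\lfloor\log_2\La_*\rfloor$ dyadic annuli $B_{2R}(y_j)\setminus B_R(y_j)$ with $R\in\{s_1^*,2s_1^*,4s_1^*,\ldots\}\cap[s_1^*,\La_*s_1^*]$, at most $O(k)$ can intersect one of the balls $B_{s_1^*}(y_l)$ for $l\in I_*\setminus\{j\}$, so choosing $\La_*$ with $\log_2\La_*\gg k$ leaves a dyadic annulus $A_j=B_{2R_j}(y_j)\setminus B_{R_j}(y_j)$ which is disjoint from $F_{*,1}\setminus B_{s_1^*}(y_j)$ and hence from $F_{*,1/4}$. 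Since $\diam(A_j)\leq 4R_j\leq 4\La_*s_1^*\ll s_i$ for every $i\in I_1$, the small diameter forces a dichotomy: either $A_j$ is disjoint from $F_{I_1,2}$, in which case $A_j\subset S^2\setminus(F_{I_1,2}\cup F_{*,1/4})$ and \eqref{est:step1-complement} directly yields the desired bound on $E(\tilde\om_1,A_j)$; or $A_j$ is contained in some $B_{4K_0\La_0 s_{j'}}(y_{j'})\subset F_{I_1,4}$ with $j'\in I_1$, in which case \eqref{est:step1-small-balls} applied at any $y\in A_j$ provides a ball $B_{(2K_0)^{-1}s_{j'}}(y)\supseteq A_j$ on which $E(\tilde\om_1,\,\cdot\setminus F_{*,1/4})$ is bounded by $\eps_1+C\xi_v$; combined with $A_j\cap F_{*,1/4}=\emptyset$ this again gives $E(\tilde\om_1,A_j)\leq 2\eps_1$. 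Hence all hypotheses of Lemma \ref{lemma:rational} are verified and \eqref{claim:step1-energy-density} follows.
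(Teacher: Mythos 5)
Your proof is correct and follows essentially the same route as the paper: both deduce \eqref{claim:step1-energy-density} from Lemma \ref{lemma:rational} applied to $\tilde\om_1$ with $J_1=I_1$ and radii of order $s_i$, verifying hypotheses \eqref{ass:lemma-rat-1}--\eqref{ass:lemma-rat-2} via the cut-off energy estimates \eqref{est:step1-complement} and \eqref{est:step1-small-balls} of Lemma \ref{lemma:first-flow} and the dichotomy according to whether the relevant annulus meets $F_{I_1,2}$. The only difference is localized to the choice of the second collection: the paper selects a subcollection $J_2\subset I_*$ together with radii $R_j\in[s_1^*,\tfrac14\La s_1^*]$ so that the doubled balls $B_{2R_j}(y_j)$ are pairwise disjoint and cover $F_{*,\frac14}$, whereas you keep $J_2=I_*$ and choose each $R_j$ individually by a dyadic pigeonhole so that the annulus $B_{2R_j}(y_j)\setminus B_{R_j}(y_j)$ avoids the other concentrated balls --- both are legitimate, since Lemma \ref{lemma:rational} does not require the balls indexed by $J_2$ to be disjoint.
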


\begin{proof}
We want to apply 
Lemma \ref{lemma:rational} so have to describe how the corresponding collections of balls are chosen. For the first collection we simply use  $J_1=I_1$, points $z_i=y_i$ and radii $R_i:= 2K_0\La_0s_i$.

 To obtain the second collection of balls we 
 note that since 
 we are dealing with no more than $k$ balls we can always fix $\La=\La(k)$ so that there will be a subset $J_2$ of $I_*$ and radii $R_i\in[s_1^*,\frac14\La s_1^*]$  so that
the balls $B_{2R_j}(y_j)$, $j\in J_2$, are pairwise disjoint and so that 
$F_{*,\frac14} \subset \bigcup_{I_2} B_{R_j}(y_j)$.

The energy estimates  \eqref{est:step1-complement} respectively \eqref{est:step1-small-balls} obtained in the above 
Lemma \ref{lemma:first-flow} then immediately imply that the assumptions
 \eqref{ass:lemma-rat-1} and \eqref{ass:lemma-rat-2} are satisfied for $d:=(2K_0)^{-1}\La_0^{-1}$. 
 
 It hence remains to check that $E(\tilde\om_1, B_{2R_l}\setminus B_{R_l }(y_l))\leq 2\eps_1$ for each $l\in J_2$ and we note that 
these
annuli are disjoint from $F_{*,\frac14}$.
Hence 
this follows from \eqref{est:step1-complement} if $ B_{2R_l}(y_l)\cap F_{I_1,2}=\emptyset$ while for all other indices $l\in I_2$ we must have 
 $y_l\in F_{I_1,4}$ so can obtain this claim from \eqref{est:step1-small-balls} and the fact that $R_l\leq Cs_1^*\ll s_i$ for all $i\in I_1$.

We can thus apply Lemma \ref{lemma:rational} to deduce that
\beq
\abs{\na \tilde \om_1}^2\leq C \sum_{I_1\cup I_2} \rho_{z_i,R_i}^2
\text{ on } S^2\setminus \bigcup_{j\in I_2} B_{4R_j}(y_j).
\eeq
This yields the claimed bound \eqref{claim:step1-energy-density}  as 
$S^2\setminus F_{*,\La}$ is contained in the above set and as the 
conformal factors in the above estimate are bounded by $C\rho_{y_i,s_i}$ respectively $C\rho_{y_i,s_1^*}$ for some $C=C(k)$ since the centres of the dilations agree while the radii are of comparable size, compare also Remark \ref{rmk:rho-convention}.
\end{proof}

We are now finally in the position to complete the argument that allows us to extract our first rational map.

The simplest case is when $I_{*}=\emptyset$, i.e.~when we have no highly concentrated balls that we were unable to scale up with the correct factor. 
In this case Step 2 and Step 3 are not needed since  Lemma \ref{lemma:first-flow} ensures that the flow remains smooth for all times and converges smoothly on all of $S^2$ to a harmonic limit  $\tilde \om_1$ 
whose the energy density is bounded by 
$$\abs{\na\tilde \om_1}^2\leq C \sum\rho_{y_i,s_i}^2 \text{  on all of } S^2$$
thanks to Lemma \ref{lemma:energy-density}. In this case we can hence simply set $\om_1:=\tilde \om_1$ and $\Om_1=S^2$ and obtain a single harmonic map $\om_1$ of degree $k$ which satisfies the key properties \ref{key:1}-\ref{key:4}.

So suppose instead that the set of highly concentrated balls is not empty. 
In this case the flow might form singularities on $F_{*,\half}\supset \overline{F_{*,\frac14}}$ and we might hence lose some of the energy that was initially concentrated on $F_{*,\half}$. However, we cannot expect that all of the energy of $F_{*,\half}$ is dissipated this way and hence 
cannot expect that the limit of the first flow already has the required properties. 

We hence need to carry out Step 2 and cut out 
all highly concentrated 
parts of $\tilde \om_1$. 
To this end we partition $I_*$ into disjoint subsets $I_*^1,\ldots,I_*^m$ each of which corresponds to a cluster of highly concentrated balls whose distance is of order $O(S_1^*)$. 
Namely, we can choose such a partition of $I_*$ so that there
 are radii $R_*^j\in [S_1^*,C S_1^*]$,  $C=C(k)$,  
 and points $y_*^j$ so that the balls $B_*^j:=B_{R_*^j}(y_*^j)$ are pairwise disjoint and so that 
$\bigcup_{I_{*}^j }
B_{S_1^*}(y_i)\subset B_{R_*^j}(y_*^j) \text{ for each } i\in I_*^j$ and 
$S_1^*=\de_v^{-\al_1}s_i^*$ as in \eqref{def:r1-star}.

We then define the first subset $\Om_1$ of our partition of $S^2$ by
\beq
\label{def:Om_1}
\Om_1:= S^2\setminus \bigcup_{j\in I_2} B_*^j.
\eeq
We note that this set is separated from the highly concentrated balls  $B_{s_1^*}(y_i)$, $i\in I_*^j$, by annuli 
of the form $B_{R_*^j}\setminus B_{(1-d)R_*^j}(y_*^j)$ for some $d=d(k)>0$ in the sense that 
\beq
\label{rel:nice-annuli}
\bigcup_{i\in I_*^j} B_{s_1^*}(y_i) \subset B_{(1-d) R_*^j}(y_*^j) \text{ while }  B_{R_*^j}(y_*^j) \subset S^2\setminus \Om_1.
\eeq
This follows since for each $i\in I_*$ and $j\in I_2$
\beq
\label{est:dist-circles}
\dist(\partial B_*^j,y_i)\geq S_1^*= (\de_v)^{-\al_1} s_1^* \eeq
is far larger than $s_1^*$ and of the same order as $R_*^j$. 
We will use \eqref{rel:nice-annuli} later in Section \ref{subsec:next-step} to argue that we will never capture the same region of the domain twice, but for now complete the extraction of the first rational map.  

For this we use that \eqref{est:dist-circles} ensures that 
the highly concentrated conformal factors $\rho_{y_i,s_1^*}$
we used in the definition of $g_1$  are small compared to $(R_*^j)^{-1}$ on the circles $\partial B_*^j$ in the sense that for all $i\in I_*$
\beq 
\rho_{y_i,s_1^*} \leq Cs_1^* \dist(\partial B_*^j,y_i)^{-2}\leq 
C (\de_v)^{\al_1}  (R_*^j)^{-1}  \text{ on } \partial B_*^j, \quad j\in I_2
\eeq
compare \eqref{est:rho-approx}. The analogue statement is trivially true for the other conformal factors $\rho_{y_i,s_i}$, $i\in I_1$, used in the definition of $g_1$  as these are bounded uniformly by 
\beq
\label{est:rho_i-all}
\norm{\rho_{y_i,s_i}}_{L^\infty(S^2)}\leq Cs_i^{-1}\leq C(\de_v)^{\al_1} (S_1^*)^{-1}\leq C (\de_v)^{\al_1} (R_*^j)^{-1}.
\eeq
As \eqref{est:dist-circles} also ensures that the circles $\partial B_*^j$ are in the region $S^2\setminus F_{*,\La}$ where the energy density of $\tilde \om_1$ is controlled by \eqref{claim:step1-energy-density} we hence obtain that 
\beq
\label{est:rat-circles}
\abs{\na \tilde \om_1}\leq C (R_*^j)^{-1} (\de_v)^{\al_1} \text{ on each } \partial B_*^j=\partial B_{R_*^j}(y_*^j),
\eeq
which gives bounds on the oscillation and the tangential derivative of $\tilde \om_1$ along   $\partial B_*^j$ of
\beq
\label{est:osc-Bj}
\osc_{\partial B_*^j} \tilde\om_1 \leq \int_{\partial B_*^j}\abs{\partial_\tau \tilde \om_1} dS_{\gz}\leq C \de_v^{\al}.
\eeq
We can hence modify
 $\tilde \om_1$ in the following way: 
Let 
 $h^j:B_*^j\to \R^3$ be the  harmonic function that
agrees with $\tilde \om_1$ on $\partial B_*^j$  and note that 
 \eqref{est:osc-Bj} and the maximum principle ensure that 
$h^j$ takes values in a small neighbourhood of $S^2$. 
We can hence replace $\tilde \om_1$ by $\frac{h^j}{\abs{h^j}}$ on each  such ball to obtain a new function $\tilde v_1\in H^1(S^2,S^2)$ which agrees with $\om_1$ on $\Om_1$ 
and whose energy 
on $S^2\setminus \Om_1$ is very small. Namely, for each  $j\in I_2$ we can use that 
 the tangential and normal derivative of a harmonic function from a disc are related by $\abs{\partial_\tau h^j}=\abs{\partial_n h^j}$ 
 to obtain from \eqref{est:osc-Bj} that 
\beqa
\label{est:energy-balls-cut-out}
E(\tilde v_1, B_*^j
)&\leq C \int_{B_*^j} \abs{\na h^j}^2\leq 
C\osc_{\partial  B_*^j}h^j \int_{\partial  B_*^j}\abs{\partial_n h^j }dS_\gz\\
&\leq C\osc_{\partial  B_*^j}\tilde \om_1 \int_{\partial  B_*^j}\abs{\partial_\tau \tilde \om_1 }dS_\gz
\leq C (\de_v)^{2\alpha_1}.\eeqa
On the other hand, we now claim that the amount of energy that we cut out is 
\beq \label{est:energy-F} 
\sum_{I_2}E(\tilde \om_1, B_*^j)\geq 4\pi(\deg(\tilde \om_1)-\deg(\tilde v_1)) -C (\de_v)^{2\alpha_1}.
\eeq
To see that this holds we let 
$\tilde h^j$ be the harmonic function 
from $S^2\setminus B_*^j$ to $\R^3$ which agrees with 
$\tilde \om_1$ on  $\partial B_{1}^j$ and use that $E(\tilde h^j)=E(h^j)\leq  C (\de_v)^{2\alpha_1}$ thanks to the conformal invariance of the energy and \eqref{est:energy-balls-cut-out}. Extending  
 $\tilde \om_1$ from $B_*^j$ to $S^2$ using $\frac{\tilde h^j}{\abs{\tilde h^j}}$ 
hence yields  functions $w_j:S^2\to S^2$ with 
$$0\leq 4\pi\deg(w_j)\leq E(w_j)\leq E( \tilde \om_1,  B_*^j)+C(\de_v)^{2\alpha_1}.$$ These maps must furthermore be must be so that 
$\deg(\tilde\om_1)=\deg(\tilde v_1)+\sum_j\deg(w_j)$
 since we only ever glue in maps which have small energy  and oscillation and which can hence not affect the degree. Combined this yields   
\eqref{est:energy-F}. 

As the energy defect is non-increasing along the flow we can use this estimate \eqref{est:energy-F} to bound the energy defect of our new map by 
\beqa
\label{est:energy-defect-new}
\de_{\tilde v_1}&=\sum_{j\in I_2}
 E(\tilde v_1,   B_*^j)
 +E(\tilde\om_1)-\sum_{j\in I_2}
E(\tilde \om_1, B_*^j)-4\pi \deg(\tilde v_1)\leq \de_{\tilde \om_1} +C(\de_v)^{2\alpha_1}   \\
&\leq \de_v+C (\de_v)^{2\alpha_1} \leq 2 \de_v.
\eeqa
As $S_1^*\ll \min_{I_1} s_i$ we furthermore know from 
\eqref{est:step1-complement} and \eqref{est:step1-small-balls} that 
$E(\tilde \om_1,B_*^j\setminus F_{*,\frac14})\leq \eps_1+C\xi_v$ is small and hence that most of the energy that we cut out was indeed concentrated on $F_{*,\frac14}\subset F_{*,1}$.
Since the energy of $\tilde \om_1$ on the complement of $F_{*,1}$ is close to $4\pi \sum_{i\in I_1} k_i$ as described in  \eqref{est:step1-lower} we hence know that energy of our new map $\tilde v_1$ must be so that 
$\abs{E(\tilde v_1)-4\pi \sum_{i\in I_1} k_i}\leq \frac\pi4+2k \eps_1+C\xi_v\leq \pi
$
which, combined with the smallness of $\de_{\tilde v_1}$, ensures that 
$$\deg(\tilde v_1)= \sum_{i\in I_1} k_i.$$ 
We furthermore note that while the $L^2(S^2,g_1)$ distance of $\tilde v_1$ from $\tilde \om_1$ is large, this new map $\tilde v_1$ is close to $\tilde \om_1$, and hence close to the original map $v_1$, with respect to the weaker $L^2(S^2,\tilde g_1)$ norm
that we obtain if we use the weaker metric
 \beq
\label{def:g_1-tilde}
\tilde g_1=\rho_1^2 \gz:=(1+\sum_{i\in I_1} \rho_{y_i,r_i}^2) g_{S^2}\eeq
that only contains weights of balls $B_{s_i}(y_i)$ that are in $\Om_1$. 

 Namely as $\tilde \om_1\equiv\tilde v_1$ on $\Om_1$ and  
as \eqref{est:rho_i-all} ensures that the weighted Area of $S^2\setminus \Om_1$ with respect to this new metric is controlled by 
 $\Area_{\tilde g_1}(\Om_1^c)\leq C(\de_v)^{2\al_1}$
 we get from \eqref{123} that
 \beq
 \label{est:new-w-dist}
  \norm{\tilde v_1-v_1}_{L^2(S^2,\tilde g_1)}\leq  \norm{ v_1-\tilde \om_1}_{L^2(S^2,g_1)}+C (\de_v)^{\al_1}\leq  C \xi_v. 
\eeq
We can now finally carry out Step 3, i.e.~evolve this new map $\tilde v_1$ with 
the weighted harmonic map flow with respect to this new metric $\tilde g_1$. All of the above arguments still apply, and indeed  simplify significantly, as this new initial map has now small energy on all of $S^2\setminus F_{I_1,2}$. We are hence dealing with a situation as in the first simpler case considered above where the initial map has no highly concentrated regions at all. 

As explained above, we hence know that this second flow
 remains smooth for all times, that it converges smoothly to a limiting harmonic map $ \om_1$ with
 $$\deg(\om_1)=\deg(\tilde v_1)=\sum_{I_1}k_i,$$
 and that the
$L^2(S^2,\tilde g_1)$ distance of $\om_1$  to $\tilde v_1$, and hence to $v_1$, is controlled by $C\xi_v$. Since we have no highly concentrated regions we can now also use Lemma \ref{lemma:energy-density} to obtain a bound on the 
energy density of
\beq
\label{est:lim1-0}
\abs{\na \om_1}^2\leq C\rho_1^2 \text{ for } \rho_1^2:= \sum_{i\in I_1} \rho_{y_i,s_i}^2 \text{ on all of } S^2.
\eeq
Altogether this establishes that the rational map $\om_1$ and the domain $\Om_1$ defined in \eqref{def:Om_1} are so that all key properties \ref{key:1}-\ref{key:4} required for the proof of Theorem \ref{thm:gauge} hold true.

\subsection{Extracting all other rational maps}
\label{subsec:next-step}$ $
\\
To complete the proof of Theorem \ref{thm:gauge} it hence remains to explain how the above argument can be applied also to extract all other harmonic maps $\om_\beta$ and domains $\Om_\beta$ from the clusters of highly concentrated balls 
if $I_*\neq \emptyset$.
 
We can analyse each cluster $\bigcup_{i\in I_*^a} B_{s_i}(y_i)\subset B_*^a$, $a=1,\ldots,m_1$, separately, so can focus on how to extract the harmonic maps that correspond to indices of $I_*^1$.

To extract the next harmonic map we let $J_2\in  I_*^1$
be so that $s_{J_2}=\max_{I_*^1} s_i$ and carry out the above argument again, except that we rescale further with the M\"obius transform $M_2$ that dilates $B_{\bar \La s_{J_2}}(y_{J_2})$ to a hemisphere, i.e.~consider the map $v_2=v_1\circ M_2^{-1}$ and the balls $B_{\tilde s_i}(\tilde y_i)=M_2(B_{s_i}(y_i))$, and now work with exponent $\al_2=(4k)^{k-1} \al=\frac{\al_1}{4k}$. 

The main difference to the previous argument is that we can now focus on the balls $B_{\tilde s_i}(\tilde y_i)$, $i\in I_*^1$, of the cluster we are analysing, as all other clusters and the previously obtained $\Om_1$ are scaled down to highly concentrated balls that are contained in a very small neighbourhood of the antipodal point 
$-y_{J_2}$ of the centre of our new dilation.

To be more precise,  as our construction ensures that 
$s_{J_2}\leq (\de_v)^{\al_1} s_1^*$ and that each point $y_j$, $j\in I_*^1$, has distance at least $S_1^*= (\de_v)^{-\al_1} s_1^*$ from both $\Om_1$ and from all other clusters $B_*^{j\neq 1}$
we obtain from \eqref{rel:dist-Mob} that 
\beq
\label{est:image-M2}
M_2(\Om_1\bigcup_{j\neq 1} B_*^j)\subset B_{C (\de_v)^{2\al_1}}(-y_{J_2}) \text{ for some }C=C(k).
\eeq
$M_2$ hence maps all of these sets into a ball around $-y_{J_2}$ whose radius is small compared to $(\de_v)^{3k \al_2}$. This ensures that  in the new gauge 
all indices that are not in $I_*^1$ correspond to highly concentrated 
balls which end up in the cluster $B_{**}^{1,0}=B_{R_{**}}(-y_{J_2})$ that forms at the antipodal point $-y_{J_2}$ whose radius $R_{**}$ will scale like $S_2^*\gg (\de_v)^{3k \al_2}$.  These balls will hence be cut out when we carry out the 3-step-procedure to extract the next harmonic map which ensures that the corresponding domains are disjoint. 

On the other hand, for the indices $i\in I_*^1$
we get the analogue 
 of Lemma \ref{lemma:new-balls} as explained in Remark \ref{rmk:rescaling} since we again 
rescale using the largest ball of the relevant collection.

We can hence proceed exactly as above,
now splitting $I_*^1$ into a subset of indices $I_{1}^1$ that we capture in the current argument, and the complementary set of indices $I_{**}^1$ that correspond to highly concentrated balls of radius less than $\de_v^{\al_2}s_2^*$ for $s_{2}^*:= \de_v^{2\al_2}\min_{I_1^1} \tilde s_{i}\gg \de_v^{3k \al_2}$. As above we then split these 
highly concentrated balls $B_{\tilde s_i}(y_i)$, $i\in I_{**}^1$, further into  clusters that are contained in disjoint balls whose radius is of order $S_2^*=\de_v^{-\al_2} s_2^*
$
where we now distinguish between 
 \begin{itemize}
 \item The (potentially empty) set of balls $B_{\tilde s_i}(y_i)$, $i\in I_{**}^{1,0}\subset I_{**}^1$, that end up in a ball $B_{**}^{1,0}=B_{R_{**}}(-y_{J_2})$ around the 
 antipodal point whose radius is of order $S_2^*$. As observed above, this ball also contains the images of $\Om_1$ and of $B_*^j$, $j\neq 1$.  
 \item A (potentially empty) collection of other clusters 
corresponding to index sets $I_{**}^{1,b}$, $b\geq 1$, that will be contained in balls
 $B_{**}^{1,b}$ whose radii are of order $S_2^*$ and which are disjoint from each other and from $B_{**}^{1,0}$. 
\end{itemize} 
Setting $\tilde \Om_2:= S^2\setminus \bigcup _{b\geq 0} B_{**}^{1,b}
$ we can then repeat the above argument and change $v_2=v_1\circ M_2^{-2}$ into a harmonic map $h$ which is close to $v_2$ on $\tilde \Om_2$. The pull-back $\om_2=h\circ M_2$ to the first gauge hence describes the behaviour of $v_1$ on the set $\Om_2:= M_2^{-1}(\Om_2)$ as claimed in \ref{key:1}-\ref{key:4} and we note that the construction ensures that $\Om_2$ is disjoint from $\Om_1$ as $M_2(\Om_1)$ is contained in the ball $B_{**}^{1,0}$ that we cut out in the above argument.

We then want to iterate this procedure to also extract the harmonic maps that correspond to the indices $I_{**}^{1,b}\subset I_*^1$, $b\geq 0$, of balls that we have not yet captured and that we cut out in this second gauge. 

For $b\geq 1$, i.e.~for clusters  that form away from the antipodal domain, we can argue exactly in the same way as above (now with exponent $\al_3=\frac{\al_2}{4k}$ and rescaling $v_2$ further to a map $v_2\circ M^{-1}$) and the resulting harmonic maps can be seen as bubbles that form on top of $\om_2$.

To extract the balls that correspond to indices in $I_{**}^{1,0}$, i.e.~balls  that 
in the first gauge  were in the cluster  from which we extracted $\om_2$, but were mapped very close to the antipodal point with the second rescaling, we need to proceed differently.  
 These regions do not correspond to bubbles that form on top of $\om_2$ but rather to bubbles that form alongside $\om_2$ and at a large distance (compared to the radii of the corresponding balls) in the first layer of bubbles that form on top of the base map $\om_1$. 
 
  We hence extract
  these balls by going back to the first viewpoint, i.e.~consider an alternative rescaling of the map $v_1$, rather than a further rescaling of $v_2$. 
  
In the first gauge these balls are given by subsets $B_{s_i}(y_i)$, $i\in I_{**}^{1,0}$, of $B_*^1$ so had radius $s_i\leq s_{J_2}$. 
  We now again select the index $i=J_3$ in $I_{**}^{1,0}$ 
so that the corresponding radius $s_{J_3}$ is maximal among the radii of all relevant balls, i.e.~now so that $s_{J_3}\geq s_i$ for all $i\in I_{**}^{1,0}$. We then rescale from the first gauge using the M\"obius transform $M_3$ that rescales $B_{\bar\La s_{J_3}}(y_{J_3})$ to a hemisphere, i.e.~consider the new map $v_3=v_1\circ M_3^{-1}$, and set again  $\al_3=\frac{\al_2}{4k}$.  
  
The argument that we used to extract $\om_2$ now works in exactly the same way, except that we need to carry out one additional step in which we argue that also 
 the domain $\Om_2=M_2^{-1}(S^2\setminus \bigcup B_{**}^{1,b})$ that  we extracted in the second step now gets 
 mapped into a very small 
neighbourhood of the antipodal point $-y_{J_3}$ of this new rescaling $M_3$.

This step is needed as $\Om_2$ is in the same cluster as 
$B_{s_{J_3}}(y_{J_3})$ so \eqref{est:image-M2} is not sufficient to ensure that we do not capture this region again in the current step.

Instead we use that the highly concentrated balls are always separated from the domain that we extract by an annulus as described in \eqref{rel:nice-annuli}, and that this ensures that the distance between $\Om_2$ and $B_{s_{J_3}}(y_{J_3})$ is very large compared to $s_{J_3}$.  
To be more precise, \eqref{rel:nice-annuli}
 ensures that the set 
 $M_2(B_{s_{J_3}}(y_{J_3}))$ that represents this ball in the second gauge 
was separated from the domain $\tilde \Om_2=M_2(\Om_2)=S^2\setminus \bigcup B_{**}^{1,b} $ 
that we extracted in that step by the annulus
$$\tilde A=B_{R_{**}}\setminus B_{(1-d)R_{**}}(-y_{J_2}).$$ 
We also recall that the radius $R_{**}$ of $B_{R_{**}}(-y_{J_2})=B_{**}^{1,0}$ is of order $ S_2^*\leq C \de_v^{\al_2}=C\de_v^{4k \al_3}$ so far smaller than $\de_v^{3k\al_3}$. 

As the centre of this annulus is antipodal to the centre of the dilation $M_2$ we used to go from the first to the second gauge, it gets pulled back to an annulus of the form 
 $A=M_2^{-1}(\tilde A)=B_{R_1}\setminus B_{R_2}(y_{J_2})$ when we switch back to the first view point. Here the radii $R_1$ and $R_2$ must be so that 
 $$R_1-R_2\geq c \frac{s_{J_2}}{(R_{**})^2} d R_{**} \geq c \frac{s_{J_2}}{R_{**}}\geq c \de_v^{-\al_2} s_{J_2},$$
as the conformal factor of $M_2^{-1}$ scales like $\frac{s_{J_2}}{\dist(y,-y_{J_2})^2}\sim \frac{s_{J_2}}{(R_{**})^2} $ on $\tilde A$, compare \eqref{est:rho-approx}. Here $c=c(k)>0$ denotes a constant that is allowed to change in every step. 

In the first gauge, this annulus $A$ separates $\Om_2$ from all balls $B_{s_i}(y_i)$ with $i\in I_{**}^{1,0}$, and hence in particular from the ball $B_{s_{J_3}}(y_{J_3})$ that we use for the next rescaling. We thus get a lower bound on the distance of this balls from $\Om_2$ of 
$$\dist(B_{s_{J_3}}(y_{J_3}),\Om_2)\geq R_2-R_1 \geq c \de_{v}^{-\al_2} s_{J_2}\geq c \de_{v}^{-\al_2}s_{J_3}\gg \de_{v}^{-3k\al_3}s_{J_3}.$$
Thus
$M_3(\Om_2)$ is contained in a ball around the antipodal point $-y_{J_3}$ of radius less than $\de_{v}^{-3k\al_3}$, 
so will be cut out when we carry out our three-step-process for $v_3=v_1\circ M_3^{-1}$. We can thus  
proceed exactly as above to extract $\om_3$.

We can iterate the above argument until we have captured all indices in $I_0$ and hence obtain a collection of maps $\om_\beta$ with total degree $k$ which satisfy the key properties \ref{key:1}-\ref{key:4} on the corresponding domains $\Om_\beta$ for  exponents $\al_{\beta}$ which are all so that $\al_\beta\geq (4k)^{-k}\al_1=\al$.

This completes the proof of Theorem \ref{thm:gauge}.

\subsection{Proof of Theorem \ref{thm:3} based on Theorem \ref{thm:gauge}}
\label{subsec:last-step} $ $

To show that  Theorem \ref{thm:gauge} implies Theorem \ref{thm:3} we
 first recall that \eqref{claim:thm-gauge-1} and \eqref{claim:Omi-gauge}  give 
$$\int_{S^2\setminus \Om_i}\abs{\na \om_i}^2 \dvgz\leq C\int_{S^2\setminus \Om_i}\rho_i^2 \dvgz=C\Area_{\rho_i^2 \gz}(S^2\setminus \Om_i)\leq C\de_v^{2\al},$$
i.e.~yield the second claim \eqref{claim:thm3-2} of Theorem \ref{thm:3}. 

It hence remains to discuss how the quantitive $H^1$ estimate \eqref{claim:thm3-1} claimed in our first main result can be obtained from this estimate and the $L^2$ rigidity estimate obtained in 
Theorem \ref{thm:gauge}, or to be more precise, from the estimate \eqref{claim:thm1} that is  stated in Corollary \ref{cor} and that is an immediate consequence of Theorem \ref{thm:gauge}.

This argument is similar to an argument used by Topping in his analysis \cite{Topping-deg-1} of degree $1$ maps between spheres. As we will see that this proof can be modified in a way that it applies also for maps from any closed surface $\Si$ into any closed smooth target $N\hookrightarrow \R^K$, we
carry out the proof in this more general setting. 

So suppose that $v: \Si\to N$ is so that there exists a collection of harmonic maps $\om_i:\Si\to N$ 
with $E(v)=\sum_i E(\om_i)+\de_v$ for which the analogues of 
\eqref{claim:thm1} and \eqref{claim:thm3-2} hold, i.e.~for which 
$$\int_{\Si} \abs{\na \om_i}^2\abs{v-\om_{i}}^2 dv_g\leq C \xi_v^2\,\text{ and } \int_{\Si\setminus \Om_i} \abs{\na\om_i}^2 dv_g\leq C\de_v^{2\al}$$
for each $i$, for a partition $\{\Om_i\}$ of $\Si$ and some $\al\geq 1$. 
Then we have 
\beqas
\thalf \sum_i\int_{\Om_i}\abs{\na(v-\om_i)}^2&
=\thalf \int_{\Si} \abs{\na v}^2-\thalf \sum_i\int_{\Om_i} \abs{\na\om_i}^2+\sum_i\int_{\Om_i} \na \om_i\na(\om_i-v)\\
&= E(v)-\sum_i E(\om_i) +\sum_i\int_{\Si} \na \om_i\na(\om_i-v)+\text{err}_1
\eeqas
for an error term that is bounded by 
$$\abs{\text{err}_1}\leq C\sum_i \norm{\na\om_i}_{L^2(\Si\setminus \Om_i)}(E(\om_i)+E(v))\leq C \de_v^\al.$$
As $\om_i$ is a harmonic map into $N$ it satisfies 
$-\Delta \om_i=A(\om_i)(\na \om_i,\na \om_i)$, $A$ the second fundamental form of $N\hookrightarrow \R^K$. As $A$ maps into the normal bundle $T^\perp N$, we thus have
\beqas
\abs{\int_{\Si} \na \om_i\na(\om_i-v)}&=
\abs{\int_{\Si} A(\om_i)(\na  \om_i,\na  \om_i)(\om_i-v)}\leq C\int_{\Si} \abs{\na\om_i}^2\abs{P_{\om_i}^\perp (\om_i-v)} \\
&\leq C \int_{\Si} \abs{\na\om_i}^2\abs{\om_i-v}^2\leq C\xi_v^2,
\eeqas
where the penultimate step follows since
the projection of the difference between two points $p_{1,2}\in N$ onto the normal space is bounded by 
$\abs{P^{\perp}_{p_1}(p_1-p_2)}\leq C \abs{p_1-p_2}^2$. 

Combined this gives the claimed estimate of 
$$\sum_i\int_{\Om_i}\abs{\na(v-\om_i)}^2 dv_g\leq \de_v+C\xi_v^2+C\de_v^{\al}\leq C\de_v\abs{\log\de_v}.$$

\section{Proof of Theorem \ref{thm:2}}
\label{sect:sharp}
To prove Theorem \ref{thm:2}  we consider a family of maps $v_{a,\mu}$, $a\in(0,\bar a]$, $\mu\geq \bar\mu$, of degree $k$ that are obtained by 
scaling a harmonic map $\si_a$ with a large factor $\mu$ and gluing it onto another harmonic map $\si_0$ at a point $p\in S^2$. 

To get the optimal result we want to choose $\si_0$ and $\si_a$ so that their combined degree is $k$, so that the distance between  $\si_0(p)$ and 
$\si_a(0,0,-1)$, i.e.~the asymptotic value of $\si_a$ in stereographic coordinates, is given by the parameter $a$ and so that these maps are not branched in the points where we glue, i.e.~so that both 
 $\na \si_0(p)$ and $\na \si_a(0,0,-1)$ are non-zero.

To fix ideas we can hence e.g.~work with the harmonic maps which are given in (complex) stereographic coordinates on the domain as 
$$\si_a(z)=R_a \pi(\tfrac{1}{z}) \text{ and } \si_0(z)=\pi(z(1+z^{k-2})),
$$ 
$R_a$ the rotation 
by the angle $a\in (0,\bar a]$ around the $y_2$-axis in $\R^3$ and glue the maps at $z=0$. Here and in the following $\pi:\C\to S^2\hookrightarrow \R^3$ denotes the inverse stereographic projection which is given by 
\beq
\label{def:pi}
\pi(x_1+ix_2):= \big(\tfrac{2x_1}{1+\abs{x}^2}, \tfrac{2x_2}{1+\abs{x}^2}, \tfrac{1-\abs{x}^2}{1+\abs{x}^2}\big).
\eeq
Using the gluing construction developed by the author in Section 2 of \cite{R-2bubbles} we obtain maps $v_{a,\mu}$
 which are so that 
\beq
\label{eq:vdelta-ball-1}
v_{a,\mu}(z)=R_a\pi(z(1+z^{k-2})+\tfrac{1}{\mu z})+O(a\abs{z}) \text{ for } \abs{z}\leq r_1:=\mu^{-1+d}
\eeq
and 
\beq
\label{eq:vdelta-ball-2}
v_{a,\mu}(z)=\pi(z(1+z^{k-2})+\tfrac{1}{\mu z})+O(\tfrac{a}{\mu \abs{z}}) \text{ for } \abs{z}\geq r_0:=\mu^{-d},
\eeq
and 
for which the analogue expansions
for the derivatives also hold, now with errors of order $O(a)$ respectively $O(\frac{a}{\mu\abs{z}^2})$. 
Here 
$d\in (0,\frac 14)$ is a small, but fixed exponent and corresponds to a choice of $f_\mu=\mu^{-d}$ in the construction in \cite{R-2bubbles} and we also note that the oscillation and energy of these maps on the annulus  $A=\DD_{r_0}\setminus \DD_{r_1}$ are small, namely of order $\osc_A v=O(a+\mu^{-d})$ and $E(v,A)=O(\frac{a^2}{\log\mu})$. 
We note that as $\mu\to \infty$ these maps converge to a bubble tree with base map $\si_0$ and bubble $\si_a$. 

While there would be other, simpler ways of gluing maps to get a sequence that converges to such a bubble tree, a key aspect of the construction in  \cite{R-2bubbles} is that it yields maps whose 
 energy defect is small, namely given by 
 \beq
\label{claim:sharp-delta}
\de_{v_{\mu,a}} \leq C\frac{a^2}{\log\mu} \text{ for some } C=C(k),
\eeq
 see  
estimate (4.15) of \cite{R-2bubbles}.
For comparison, if we were to simply interpolate between $\si_a(\mu\cdot)$ and $\si_0$ on a suitable annulus, or first cuts off these maps to constants and then interpolate between these constants, this would result in an energy defect of order $\mu^{-1}+\frac{a^2}{\log\mu}$. 
While this would already be sufficient to get an example of maps satisfying \eqref{claim:intro}, compare also Remark \ref{rmk:no-rigidity} below, such a construction would not allow us to establish that our rigidity estimates are sharp. Indeed to prove Theorem \ref{thm:2} we will later need to choose sequences for which $a^{-1}$ scales like $\mu$ and for which the energy defect $\de_v\sim \mu^{-2}(\log\mu)^{-1}$ is hence far smaller than the rate of $O(\mu^{-1})$ that one would obtain from simpler gluing constructions.

\begin{rmk}
\label{rmk:sharp}
We note that the leading order term in the energy defect $\de_v$ of the maps constructed in \cite{R-2bubbles} corresponds to the 
amount of energy that is needed to transition from $\si_a(\infty)$ to $\si_0(0)$ on the annulus between the region where the bubble is concentrated and the bulk of the domain. It is this simple feature of being able to transition between different constants $c_1$ and $c_2$ on an annulus with a map whose energy scales like $ \frac{\abs{c_1-c_2}^2}{\abs{\log(r_1/r_0)}}$ which is reflected in the fact that the sharp rate of the quantitative rigidity estimates for maps of degree at least $2$ is not $C\de_v$ but rather $C\de_v\abs{\log\de_v}$.

We recall that a rate of the form $\de_v\abs{\log\de_v}$ appears also in a conjecture that was stated in \cite{Deng} and that was based on the construction of a sequence of maps which violate \eqref{est:quant-deg-1}. We however point out that the conjectured estimate in \cite{Deng} does not hold as rigidity fails if one compares general maps of degree $k\geq 2$  with rational maps of this degree, compare Remark \ref{rmk:no-rigidity} below and \eqref{claim:intro} in the introduction. 
 \end{rmk}

We will see that the distance of these maps $v_{a,\mu}$ from any rational map $\om$ with the same degree is at least of order $a=\dist(\si_a(\infty),\si_0(0))$ and note that this reflects the fact that the zeroth order terms in the expansions \eqref{eq:vdelta-ball-1} and \eqref{eq:vdelta-ball-2} have distance of order $a$ while for holomorphic functions the meanvalue over circles is constant.

\begin{lemma}
\label{lemma:sharp1}
There exist numbers $\bar\mu<\infty$, $\bar a>0$ and $c_0>0$ so that for any $a\in (0,\bar a]$, any $\mu\geq \bar \mu$ 
and any rational map $\om:S^2\to S^2$ with degree $k$
we have 
\beq
\label{claim:dist-sharp-1}
\dist_\om(\om,v_{a,\mu})^2:= \int_{S^2}\abs{\na \om}^2 \abs{
\om-v_{a,\mu}
}^2 \dvgz \geq c_0\cdot a^2
\eeq
as well as 
\beq
\label{claim:dist-sharp-2}
\int_{S^2}\abs{\na (\om-v_{a,\mu})}^2 \dvgz \geq c_0\cdot a^2.
\eeq
\end{lemma}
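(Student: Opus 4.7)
The plan is to reduce both claims to a single weighted $L^2$ bound via a standard identity, and then to prove that bound using a circle-mean analysis at two well-separated scales around the glue point.

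For the reduction, since $\om$ is a rational map of degree $k$, it is harmonic with $\Delta\om=-\abs{\na\om}^2\om$ and $E(\om)=4\pi k$. Together with the pointwise identity $\om\cdot(v_{a,\mu}-\om)=-\thalf\abs{v_{a,\mu}-\om}^2$ (from $\abs{\om}=\abs{v_{a,\mu}}=1$), integration by parts yields
\[
\int_{S^2}\abs{\na(v_{a,\mu}-\om)}^2\dvgz=2\de_{v_{a,\mu}}+\int_{S^2}\abs{\na\om}^2\abs{v_{a,\mu}-\om}^2\dvgz,
\]
and combining with \eqref{claim:sharp-delta} in the form $\de_{v_{a,\mu}}\leq Ca^2/\log\mu=o(a^2)$ shows that \eqref{claim:dist-sharp-2} follows from \eqref{claim:dist-sharp-1} up to a choice of $c_0$.

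In stereographic coordinates $z\in\C$ centred at the glue point $p$, let $\bar w(r):=\fint_{\abs{z}=r}w\,d\theta$. Using the expansions \eqref{eq:vdelta-ball-1} and \eqref{eq:vdelta-ball-2} together with the explicit formula $\fint_0^{2\pi}\pi(Re^{i\theta})\,d\theta=(0,0,(1-R^2)/(1+R^2))$ (obtained from \eqref{def:pi} by Fourier cancellation of the linear terms in $\theta$), one checks that $\bar v_{a,\mu}(r_0)=(0,0,1)+O(\mu^{-2d})$ and $\bar v_{a,\mu}(r_1)=R_a(0,0,1)+O(\mu^{-2d})$, so that
\[
\abs{\bar v_{a,\mu}(r_0)-\bar v_{a,\mu}(r_1)}\geq 2\sin(a/2)-O(\mu^{-2d})\geq c_1 a
\]
in the regime where $a\mu^{2d}$ is large. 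For $\om=\pi\circ f$ with $f$ meromorphic of degree $k$, the mean-value property of holomorphic functions forces $\fint_{\abs{z}=r}f\,d\theta$ to be independent of $r$ on any pole-free disc around $0$, so the only way $\bar\om(r)$ can vary in $r$ is through the nonlinearity of $\pi$, and the total variation of $\bar\om(r)$ over $r\in[r_1,r_0]$ is controlled quantitatively by the energy of $\om$ on $\{\abs{z}\leq 2r_0\}$.

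To conclude \eqref{claim:dist-sharp-1} I would split on the local energy of $\om$ near $p$: in the low-energy regime, $\bar\om(r)$ is essentially constant on $[r_1,r_0]$ and cannot simultaneously match both $(0,0,1)$ and $R_a(0,0,1)$ to within $c_1 a/2$, so $\abs{\om-v_{a,\mu}}\gtrsim a$ on a macroscopic region carrying a positive share of the total $\abs{\na\om}^2$-mass (which equals $8\pi k$); in the high-energy regime, $\om$ forms an incipient holomorphic bubble at some scale in $[r_1,r_0]$, and the target rotation $R_a\neq I$ obstructs matching this bubble to the $R_a$-rotated bubble $\sigma_a$ of $v_{a,\mu}$, again producing a mismatch of size $\sim a$ on a set of $\abs{\na\om}^2$-mass bounded below. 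The main obstacle in making this scheme yield the stated rate uniformly is precisely avoiding a logarithmic loss: a direct Poincar\'e inequality on the annulus $\{r_1\leq\abs{z}\leq r_0\}$ yields only the weaker bound $a^2/\log\mu$, and the sharp rate $a^2$ requires using the weight $\abs{\na\om}^2$ to localise the estimate on regions where $\om$ is close to being constant, so that the resulting pointwise discrepancy of order $a$ contributes the full $a^2$ to the weighted integral.
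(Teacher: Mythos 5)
Your reduction of \eqref{claim:dist-sharp-2} to \eqref{claim:dist-sharp-1} via the exact identity $\int\abs{\na(v-\om)}^2=2\de_v+\int\abs{\na\om}^2\abs{v-\om}^2$ is correct (and, since $\de_v\geq 0$, you do not even need \eqref{claim:sharp-delta} for this direction); it is a cleaner, sphere-specific version of the paper's argument. The circle-mean strategy is also the right one. But there are two genuine gaps. First, your mean-value comparison is carried out in the ambient $\R^3$ coordinates on the target, where the nonlinearity of $\pi$ produces an \emph{additive} error of order $\mu^{-2d}$ in $\bar v_{a,\mu}(r_0)$ and $\bar v_{a,\mu}(r_1)$; you therefore only get $\abs{\bar v(r_0)-\bar v(r_1)}\geq c_1a$ when $a\mu^{2d}$ is large, as you yourself note. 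This excludes precisely the regime the lemma must cover: the application to Theorem \ref{thm:2} takes $a\sim\mu^{-1}$, and since $d<\tfrac14$ one has $a\mu^{2d}=\mu^{2d-1}\to 0$. The paper avoids this by passing to stereographic coordinates on the \emph{target} as well: there $\tilde\om=\pi^{-1}\circ\om$ is genuinely holomorphic on the intermediate annulus (being bounded there), so its circle means are exactly constant in $r$, and the circle means of $\tilde v$ at the two scales differ by $\tfrac{\sin a}{1+\cos a}$ up to errors that are \emph{multiples} of $a$ (of order $a\mu^{-1/2}$), giving a mismatch $\geq a/4$ uniformly in $a$. Your $O(\mu^{-2d})$ error cannot be absorbed this way.

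Second, the passage from the circle-mean mismatch to the weighted lower bound $\int\abs{\na\om}^2\abs{\om-v}^2\geq c_0a^2$ is only sketched: your two-regime discussion (low versus high local energy of $\om$ near $p$) asserts, but does not prove, that the mismatch occurs on a region carrying a definite share of the $\abs{\na\om}^2$-mass, and you explicitly flag the risk of a $\log\mu$ loss without resolving it. The paper resolves both points at once by first observing that one may assume $\int\abs{\na(\om-v)}^2\leq a^2$ (otherwise both claims are trivial), which via Lemma \ref{lemma:rational} pins down the energy distribution of $\om$ and lets one replace the weight $\abs{\na\om}^2$ by $\abs{\na v}^2$ up to controlled errors; the mismatch of size $a/4$ then lives on one of two annuli where one has the explicit bounds $\abs{\na v}\geq c$ (outer annulus, area of order $1$) or $\abs{\na v}\geq c\mu$ (inner annulus, area of order $\mu^{-2}$), and in either case the weighted contribution is $\geq ca^2$ with no logarithm. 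Without this reduction and the explicit control of the weight on the two annuli, your argument does not close.
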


\begin{rmk}
\label{rmk:no-rigidity}
For fixed $a\neq 0$ and $\mu_n\to \infty$ we can hence obtain an example of a sequence of maps $v_n=v_{a,\mu_n}$ whose energy defect tends to zero but whose distance to the set of rational maps with the given degree is bounded away from zero. 
\end{rmk}

On the other hand, the distance of $v_{a,\mu}$ from any collections $\{\om_i\}$ of multiple rational maps which have total degree $k$, and hence each have $\deg(\om_i)<k$, is bounded by 
\begin{lemma}
\label{lemma:sharp2}
There exist $\bar\mu<\infty$, $\bar a>0$ and $c_0>0$ so that for any $a\in (0,\bar a]$ and any $\mu\geq \bar \mu$
\beq
\label{claim:dist-sharp-3}
\dist_\om(\om,v)^2=\int_{S^2}\abs{\na \om}^2 \abs{v_{a,\mu}-\om}^2 \dvgz \geq c_0 \mu^{-2} \log\mu
\eeq
for any  rational map $\om:S^2\to S^2$ with degree $1\leq \deg(\om)\leq k-1$.
Furthermore, 
\beq \label{claim:dist-sharp-4}
\sum_i\int_{\Om_i} \abs{\na (v_{a,\mu}-\om_i)}^2  \geq c_0 \mu^{-2}
\eeq
for any collection $\om_1,\ldots, \om_m$ of $m\geq 2$ rational maps with $\deg(\om_i)\geq 1$ and $\sum_i \deg(\om_i)=k$ and any partition $\{\Om_i\}$ of $S^2$. 
\end{lemma}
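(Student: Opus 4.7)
The plan is to prove both claims of Lemma~\ref{lemma:sharp2} by exploiting the bubble-tree structure of $v_{a,\mu}$: on the bulk $D=S^2\setminus B_{r_0}(0)$ it is a small perturbation of the degree-$(k-1)$ rational map $\si_0$ with $v_{a,\mu}-\si_0$ of order $1/(\mu\abs{z})$, on the bubble ball $B_{r_1}(0)$ it is close to the rescaled rotated bubble $\si_a(\mu z)=R_a\pi(1/(\mu z))$ of degree one, and it transitions on the annular neck $A=B_{r_0}\setminus B_{r_1}$ at an energy cost of only $O(a^2/\log\mu)$.

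For the first claim, the central tool is the identity, valid for every harmonic map $\om:S^2\to S^2\hookrightarrow\R^3$,
\beqs
\tfrac12\int_{S^2}\abs{\na\om}^2\abs{v_{a,\mu}-\om}^2\,\dvgz \;=\; E(v_{a,\mu}-\om)\,-\,E(v_{a,\mu})\,+\,E(\om) \;=\; E(v_{a,\mu}-\om)\,-\,4\pi(k-\deg\om)\,-\,\de_{v_{a,\mu}},
\eeqs
obtained by expanding $\abs{\na(v_{a,\mu}-\om)}^2$ and integrating by parts using $-\Delta\om=\abs{\na\om}^2\om$ and $\abs{v_{a,\mu}}=\abs{\om}=1$. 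Non-negativity of the left-hand side automatically gives the qualitative inequality $E(v_{a,\mu}-\om)\geq 4\pi(k-\deg\om)$, so the first claim reduces to the quantitative \emph{energy-gap} estimate $E(v_{a,\mu}-\om)\geq 4\pi(k-\deg\om)+c_0\mu^{-2}\log\mu$. The template case $\om=\si_0$ is verified directly from \eqref{eq:vdelta-ball-2}, which yields $v_{a,\mu}-\si_0\sim 1/(\mu z)$ on $D$ and hence
\beqs
\int_{S^2}\abs{\na\si_0}^2\abs{v_{a,\mu}-\si_0}^2\,\dvgz \;\gtrsim\; \int_{r_0}^{1}\frac{r\,dr}{(\mu r)^2} \;\sim\; \frac{\log\mu}{\mu^2}.
\eeqs
For a general rational $\om$ with $1\leq\deg\om\leq k-1$, I would split into two cases after a Möbius normalisation. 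If $\abs{\na\om}$ is uniformly bounded on $B_{1/2}(0)$ (so $\om$ carries no bubble near the origin), the triangle inequality $\abs{v-\om}^2\geq\tfrac12\abs{v-\si_0}^2-\abs{\si_0-\om}^2$ together with a compactness/bubble-tree argument forcing $\om$ to be close to $\si_0$ in $H^1$ (whence $\int\abs{\na\om}^2\abs{\si_0-\om}^2\ll\mu^{-2}\log\mu$) reduces matters to the template computation. If instead $\om$ does concentrate near the origin, its non-concentrated ``bulk'' part has degree at most $k-2$ and cannot approximate the degree-$(k-1)$ map $\si_0$ on $D$, producing a $\Theta(1)$ contribution that vastly exceeds $\mu^{-2}\log\mu$.

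For the second claim, any partition $\{\Omega_i\}_{i=1}^m$ with $m\geq 2$ forces each $\om_i$ to satisfy $1\leq\deg\om_i\leq k-1$. I would select the index $i_0$ for which $\Omega_{i_0}\cap B_{r_1}(0)$ has the largest measure and show $\int_{\Omega_{i_0}}\abs{\na(v_{a,\mu}-\om_{i_0})}^2\,\dvgz\gtrsim\mu^{-2}$. Either $\om_{i_0}$ matches the bubble of $v_{a,\mu}$ on $\Omega_{i_0}\cap B_{r_1}$, in which case on $\Omega_{i_0}\setminus B_{r_1}$ the map $\om_{i_0}$ takes values near its asymptotic constant while $v_{a,\mu}\approx\si_0$ has a non-vanishing gradient there, producing an $H^1$-defect bounded below by a positive constant; or $\om_{i_0}$ fails to match the bubble, in which case $\abs{\na(v_{a,\mu}-\om_{i_0})}^2\gtrsim\abs{\na v_{a,\mu}}^2\sim\mu^2$ pointwise over the bubble region of area $\sim\mu^{-2}$, yielding the desired $\mu^{-2}$ lower bound directly by integration.

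The principal obstacle is the quantitative energy-gap estimate in the first claim for rational maps $\om$ whose bubble-tree compactifications differ from that of $\si_0$ in a non-trivial way: ruling out exotic rational maps that might partially cancel the $1/(\mu z)$ tail in $v_{a,\mu}-\si_0$ requires either a precise quantitative bubble-tree decomposition of $\om$ or a concentration-compactness argument along sequences $\mu_n\to\infty$, $a_n\to 0$, and this is where the bulk of the technical work will lie.
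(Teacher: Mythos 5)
Your energy identity $\thalf\int\abs{\na\om}^2\abs{v-\om}^2=E(v-\om)-E(v)+E(\om)$ is correct and the template computation for $\om=\si_0$ is fine, but the reduction of the general case to the template has a genuine gap. In your "no bubble near the origin" case you need $\int\abs{\na\om}^2\abs{\si_0-\om}^2\ll\mu^{-2}\log\mu$, and this quantitative rate does \emph{not} follow from a compactness argument, which only yields $\om\to\si_0$ qualitatively; a rational map $\om$ at distance $\sim\mu^{-1}\sqrt{\log\mu}$ from $\si_0$ defeats the triangle inequality $\abs{v-\om}^2\geq\thalf\abs{v-\si_0}^2-\abs{\si_0-\om}^2$, since the subtracted term is then comparable to the quantity you are trying to bound from below. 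The paper's proof avoids comparing with $\si_0$ on the bulk altogether: after extracting a smoothly convergent subsequence $\om_n\to\si_0$ (no bubbling is possible since $\deg\om_n\leq k-1$), it works on the neck annulus $A_n=\DD_{\mu_n^{-1+d}}\setminus\DD_{\mu_n^{-1}}$, where the $C^1$ bounds make $\om_n$ oscillate only by $O(\mu_n^{-1-d})$ per circle while $v_n\approx R_a\pi(\tfrac1{\mu_n z})$ deviates from \emph{every} constant $q$ by $\gtrsim\tfrac1{\mu_n\abs{z}}$ on at least half of each circle; integrating $(\mu_n\abs{z})^{-2}$ over $A_n$ gives $c\mu_n^{-2}\log\mu_n$ with no quantitative information about $\dist(\om_n,\si_0)$ needed. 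You would need to replace your bulk comparison with something of this "uniform over all constants" type to close the argument; as written, the step marked "whence" is a non sequitur.

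The second claim has a more serious problem: your dichotomy misses the actual mechanism. If the partition assigns to the bubble-approximating map $\om_{i_0}$ only a small neighbourhood of the concentration point (e.g.\ $\Om_{i_0}=B_{r_0}(0)$), then $\int_{\Om_{i_0}\setminus B_{r_1}}\abs{\na v}^2=O(a^2/\log\mu)$ is tiny and your claimed $\Theta(1)$ defect in the "matches the bubble" branch is simply false; your other branch is also unquantified (the index of largest measure in $B_{r_1}$ need not meet the region $\abs{z}\sim\mu^{-1}$ where $\abs{\na v}\sim\mu$, since $\abs{B_{r_1}}\gg\mu^{-2}$). The paper instead observes that \eqref{claim:dist-sharp-4} follows whenever $\sum_i\int_{S^2\setminus\Om_i}\abs{\na\om_i}^2\geq c\mu^{-1}$, via
\beq
c\mu^{-1}\leq\sum_i E(\om_i)-\thalf\sum_i\int_{\Om_i}\abs{\na\om_i}^2\leq\thalf\sum_i\int_{\Om_i}\big(\abs{\na v}^2-\abs{\na\om_i}^2\big)\leq C\sum_i\norm{\na(v-\om_i)}_{L^2(\Om_i)},
\eeq
using $\sum_i E(\om_i)=4\pi k\leq E(v)$; and then verifies the hypothesis on the intermediate annulus $A_*=\{\abs{z}\sim\mu^{-1/2}\}$, where \emph{both} the base-approximant and the bubble-approximant have $\abs{\na\om_i}\geq c$, so the disjointness of the $\Om_i$ forces at least one of them to lose energy $\geq c\,\Area(A_*)\sim\mu^{-1}$ outside its own domain (with a compactness argument covering collections not close to this configuration). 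This conversion of an $O(\mu^{-1})$ energy loss into an $O(\mu^{-2})$ squared $H^1$ distance is the heart of the second claim and is absent from your proposal.
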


Combined, these two lemmas allow us to prove that 
Theorem \ref{thm:2} holds for sequences for which the deviation $a$ of the two maps in the target is of the same order as the radius $r\sim \mu^{-1}$ of the ball in the domain where the bubble concentrates. 

To be more precise, we can e.g.~consider $v_n=v_{\mu_n,a_n}$ for parameters $a_n\in (0,\bar a]$ and  $\mu_n\geq \bar \mu$ with $\mu_n=a_n^{-1}\to \infty$ and note that 
 \eqref{claim:dist-sharp-1} and \eqref{claim:dist-sharp-3} ensure that $d_{\om_n}(v_n,\om_n)\geq \sqrt{c_0}\mu_n^{-1}$ for all $n$ and for all rational maps $\om_n$ with $1\leq \deg(\om_n)\leq k$. 
As \eqref{claim:thm2} would be trivially true if we had 
 $\log\mu_n< \frac14\abs{\log\de_{v_n}}$ and hence $d_\om(v_n,\om_n)\geq c\de_{v_n}^{\frac14}$, we can then use the bound \eqref{claim:sharp-delta} on the energy defect to see that
$$\dist_{\om_n}(v_n,\om_n)^2\geq  \frac{c_0}{ \mu_n^2 \log\mu_n} \log\mu_n\geq  c  \de_v \abs{\log\de_v} \text{ for some } c=c(k)>0
$$
 as claimed in Theorem \ref{thm:2}. 

As \eqref{claim:dist-sharp-2} and \eqref{claim:dist-sharp-4} provide the analogue bounds on the 
$\dot H^1$ distance of the maps $v_n$ from any collection of rational maps with total degree $k$ we obtain the second claim of the theorem by the same argument.

This hence reduces the proof of Theorem \ref{thm:2} to the proofs of 
these two lemmas.

\begin{proof}[Proof of Lemma \ref{lemma:sharp1}]
We first explain why both claims of the lemma follow once we have shown that the maps $v=v_{a,\mu}$
satisfy
\beq \label{est:may0}
\int \abs{\na v}^2 \dist_{\C P^1}^2(\om,v) \geq c a^2 \text{ for some } c=c(k)>0
\eeq
for all degree $k$ rational maps $\om$ for which 
\beq
\label{wlog:sharp}
\int\abs{\na (\om-v)}^2 \leq a^2.
\eeq
Here and in the following we let 
$\dist_{\C P^1}(p_1,p_2)=\min(\dist_{S^2}(p_1,p_2),\dist_{S^2}(p_1,-p_2))$ be the distance that identifies antipodal points and compute integrals over $S^2$ and with respect to the standard metric unless specified otherwise. We also use the convention that all arguments are to be understood to hold for $a\in (0,\bar a]$ and $\mu\geq \bar \mu$ for suitably small $\bar a$ and large $\bar\mu$.

Indeed, since $\deg(\om)=\deg(v)$ and hence $E(\om)=E(v)-\de_v$, we can first apply the arguments from Section \ref{subsec:last-step} and then use \eqref{claim:sharp-delta} to see that 
\beq\label{est:may1}
\int\abs{\na (\om-v)}^2\leq \de_v+C \int \abs{\na\om}^2 \abs{\om-v}^2 \leq C\tfrac{a^2}{\log\mu}+C \int \abs{\na\om}^2 \abs{\om-v}^2 
\eeq
and hence deduce that the two claims of the lemma are trivially true if $\int\abs{\na (\om-v)}^2> a^2$. 

Furthermore, combining \eqref{est:may1} with 
\beqas
\int \abs{\na v}^2 \dist_{\C P^1}^2(\om,v) 
&\leq \int\abs{\na \om}^2  \dist_{\C P^1}^2(\om,v) +\int\abs{\na(v-\om)} (\abs{\na v}+\abs{\na \om})   \dist^2 _{\C P^1}(\om,v)\\
&\leq 
\tfrac32 \int \abs{\na\om}^2  \dist_{\C P^1}^2(\om,v)  +\thalf\int \abs{\na v}^2   \dist_{\C P^1}^2(\om,v) +C \int\abs{\na (w-v)}^2 
\eeqas
shows that proving \eqref{est:may0} is equivalent to establishing a lower bound of 
\beq \label{est:may2}
\int \abs{\na \om}^2 \dist_{\C P^1}^2(\om,v) \geq c a^2
\eeq
which then of course immediately implies the first claim of the lemma. 

At the same time \eqref{est:may2} also implies the second claim of the lemma since the fact that $\om$ is (weakly) conformal ensures that we have a pointwise bound of 
\beq
\label{est:may-hallo}
\abs{\na \om -\na v}\geq c \cdot  \dist_{\C P^1}(\om,v) \abs{\na \om}\eeq
for a universal $c>0$.
This basic fact follows since the distance between any matrix $A\in M_{3\times 2}(\R)$ whose columns are given by an on-basis of the tangent space $T_pS^2\subset \R^3$ at a point $p$ and any other matrix $B\in M_{3\times 2}(\R)$ whose columns are contained in $T_{\tilde p}S^2$ is bounded from below in terms of the distance of the corresponding tangent planes and hence by  $
\abs{A-B}\geq c\, \dist_{\C P^1}(p,\tilde p)$.

It thus remains to prove \eqref{est:may0} for maps which satisfy 
 \eqref{wlog:sharp}. For such maps we can use that the difference between the energy of  $\om$  and the energy of $v$ on any ball is no more than $Ca^2$, and hence, for suitably chosen $\bar a$, less than the number 
 $\eps_1$ obtained in Lemma \ref{lemma:rational}. This lemma hence implies that 
\beq
\label{est:sharp-proof-3}
\abs{\na \om}^2 \leq C(\abs{\na \pi}^2+\abs{\na \pi_\mu}^2)
\eeq
for some $C=C(k)$, for $\pi_\mu=\pi(\mu\cdot)$ and for all maps $\om$ that we have to consider.

We therefore know that the oscillation of $\om$ over an annulus of the form $A=\DD_{\La^{-1}}\setminus \DD_{\La \mu^{-1}}$ is small if $\La$ is large. 
The same statement holds true for $v$ (if also $\bar a$ is small and $\bar \mu$ is large)
and we note that $v(A)$ is contained in a small neighbourhood of $\pi(0)$. We also remark that 
the maps $v$ and $\om$ must be close on the bulk of the domain $S^2$ since
\eqref{wlog:sharp} implies that also  $\int \abs{\na \om}^2 \dist_{\C P^1}(v,\om)^2$ needs to be small and since the
smallness of the $\dot H^1$ distance of $v$ to $\om$ excludes the possibility that 
$v$ is close to $-\om$ in $L^2(S^2)$. 

While these statements could all be carefully quantified, all we need from this discussion is that 
we can fix $\La$ so that $\om(A)$ and $v(A)$ are contained in 
$ B_{\frac{\pi}{4}}(\pi(0))$ for all maps $v=v_{a,\mu}$ (with sufficiently small  $a$ and large  $\mu$) and 
all $\om$ which satisfy \eqref{wlog:sharp}.

On $A$ we hence have $\dist_{\C P^1}(v,\om)=\dist_{S^2}(v,\om)$ and know that the functions $\tilde v= \pi^{-1} \circ v$ and $\tilde \om:= \pi^{-1} \circ \om:\hat \C\to \hat \C$ which represent $v$ and $\om$ in stereographic coordinates on the target are so that $\abs{\tilde\om}, \abs{\tilde v}\leq \tan(\frac\pi8)\leq C$. This allows us to conclude that 
$$\abs{\tilde v-\tilde \om}\leq C \abs{v-\om}\leq C \dist_{\C P^1}(v,\om) \text{ and } \abs{\na \tilde v}\leq C\abs{\na v} \text{ on } A,$$
so 
\eqref{est:may0} follows if we establish that 
\beq\label{est:vde-deg2-2}
\int_{A} \abs{\na \tilde v}^2 \abs{\tilde v-\tilde \om}^2 dv_\C\geq ca^2.\eeq
To this end we can use that the 
 meromorphic
function $\tilde \om:\hat\C\to \hat \C$ is indeed holomorphic in $A$ since $\abs{\tilde \om}\leq \tan(\frac\pi8)\leq C$ is uniformly bounded on this set. Hence the
meanvalue
$\fint_{S^1} \tilde\om(r e^{i\theta}) d\th $ is independent of $r\in [\La\mu^{-1}, \La^{-1}]$. 
We now want to argue that this means that a lower bound of the form 
\beq
\label{claim:may}
\abs{\fint_{S^1} (\tilde\om-\tilde v )(r e^{i\theta}) d\th }\geq \tfrac{a}{4} \text{ for } r\in I_i \eeq
must be valid at least on one of the intervals $I_0=[\thalf\La^{-1},\La^{-1}]$ or $I_1=[\La \mu^{-1},2\La \mu^{-1}]$.

To see this we write for short $p(z)=z(1+z^{k-2})+\frac{1}{\mu z}$ and note that  
 \eqref{eq:vdelta-ball-2} yields
$$\fint_{S^1} \tilde v(re^{i\th}) d\th= \fint_{S^1} p(r e^{i\theta}) d\theta +O(a\mu^{-1})=O(a\mu^{-1}) \text{ for } r\in I_0.$$
On the other hand 
 \eqref{eq:vdelta-ball-1} 
ensures that $\abs{\tilde v-\pi^{-1}(R_a\pi(p(z))}=O(a\mu^{-1})$ on $\abs{z}\leq 2\La\mu^{-1}$. As $z\mapsto \pi^{-1}(R_a\pi(p(z))$ is holomorphic on $A$  and as 
$\abs{\pi(p(z))-\pi(0)}\leq C\mu^\mhalf$ for $\abs{z}=\mu^\mhalf$ 
we hence see that
\beqas
\abs{\fint_{S^1} \tilde v(re^{i\th}) d\th }&\geq \abs{\fint_{S^1} \pi^{-1}(R_a(\pi(p(re^{i\th}))) d\th }-Ca\mu^{-1}\\
&= \abs{\fint_{S^1} 
\pi^{-1}(R_a(\pi(p(\mu^{\mhalf} e^{i\th}))d\theta }-Ca\mu^{-1}
\geq \abs{\pi^{-1}(R_a(\pi(0))}-Ca\mu^{\mhalf}
\eeqas 
 for $r\in I_1$. Combined this ensures that 
$$\abs{\fint_{S^1} \tilde v(re^{i\theta})- \tilde v(\tilde re^{i\theta})d\theta}\geq  \abs{\pi^{-1}(\sin a, 0, \cos a)}-Ca\mu^{\mhalf}=\tfrac{\sin a}{1+\cos a}- Ca\mu^{\mhalf}\geq  
\tfrac a2 $$ for all $r\in I_1$ and $\tilde r\in I_0$. As $\fint \tilde \om (r e^{i\theta})$ is constant in $r$ we deduce that 
\eqref{claim:may} holds at least on one of these intervals $I_0$ or $I_1$. 
As $\abs{\na v}\geq c$ for $\abs{z}\in I_0$ and $\abs{\na v}\geq c\mu$
for $\abs{z}\in I_1$ this 
 yields the required bound \eqref{est:vde-deg2-2} and hence completes the proof of the lemma. 
\end{proof} 

\begin{proof}[Proof of Lemma \ref{lemma:sharp2}]
We will prove the two claims separately and note that for some of these arguments it will be more convenient to view the maps $v$ and $\om$ as maps from the domain $S^2$ rather than from $\C$ and that we will do so without switching notation, though will indicate in which viewpoint we work.

To 
 establish the first claim it suffices to 
  consider sequences of maps $v_n=v_{a_n,\mu_n}$ for which $\mu_n\to \infty$  and $a_n\to a_{\infty}\in [0,\bar a]$ and show that the claim holds for a subsequence of any given sequence of rational maps $\om_n$ with degree in $\{1,\ldots, k-1\}$.   

Given such sequences $v_n$ and $\om_n$ of maps from $S^2$ to $S^2$  we can use standard compactness results to pass to a subsequence so that $\om_n$ converges smoothly to a limiting harmonic map $\om_\infty$ away from a finite number of points in $S^2$ where bubbles form. 
If any of these bubbles form at a point $p\in S^2$ which is different from the point $\pi(0)\in S^2$ where $v_n$ forms a bubble, then $\dist_{\om_n}(\om_n,v_{n})$ remains bounded away from zero. Similarly, if the limiting harmonic map $\om_\infty$ is non-constant, but does not agree with $\si_0$, then  $\dist_{\om_n}(\om_n,v_{n})$ remains bounded away from zero so the claim is trivially true. 

It hence remains to consider the two cases where $\om_\infty=\si_0$ respectively where $\om_\infty$ is constant and where all of the energy concentrates near $\pi(0)\in S^2$. 

In the former case the maps $\om_n$ must indeed converge smoothly to $\si_0$ on all of $S^2$, as $\deg(\om_n)\leq k-1=\deg(\om_\infty)$  excludes the formation of any bubble. 
As $\si_0$ is not branched at $\pi(0)$ we hence obtain  a pointwise lower bound of $\abs{\na \om_n}\geq c>0$ that is valid on a fixed size ball around $\pi(0)$ in $S^2$ and for all sufficiently large $n$.

Switching back to stereographic coordinates on the domain we hence know that $\abs{\na \om_n}\geq \tilde c>0$ on a fixed disc around $z=0$ and thus in particular on the discs
 $\DD_{r_1(\mu_n)}$ where $v_n$ is described by \eqref{eq:vdelta-ball-1}. 
The smooth convergence of $\om_n$ of course also ensures that the oscillation of $\om_n$ over any circle $\partial \DD_r \subset A_n:=\DD_{\mu_n^{-1+d}}\setminus\DD_{\mu_n^{-1}}
$  is of order $O(r)=O(\mu_n^{-1-d})$ and we recall from 
\eqref{eq:vdelta-ball-1} that also $\abs{v_n-R_a\pi(\frac{1}{\mu_n z})}=O(\abs{z})=O(\mu_n^{-1-d})$ on this annulus.  
 
We then note that for any $q=(q_1,q_2,q_3) \in \R^3$ and any $r\in (0,1)$ the set of points in $\partial \DD_r$ at which the second estimate in
$$\abs{\pi(x)-q}\geq \abs{\tfrac{2x}{1+\abs{x}^2}-(q_1,q_2)}\geq \abs{\tfrac{2x}{1+\abs{x}^2}}\geq \abs{x}$$ is valid consists of at least half of the circle $\partial\DD_r$. Combined this ensures that 
\beqas
\dist_{\om_n}(\om_n,v_n)^2&\geq c\int_{A_n} \abs{v_n-\om_n}^2\geq  
c\inf_{q\in\R^3} \int_{A_n}\abs{\pi(\frac{1}{\mu_n z})-q}^2 dv_\C-C\mu_n^{-2(1-d)}\Area(A_n) \\
&\geq c \int_{A_n} \abs{\mu_n\abs{z}}^{-2} dv_\C -C 
 \mu_n^{-4(1-d)} \geq c \mu_n^{-2}\log\mu_n
\eeqas
which establishes the claim \eqref{claim:dist-sharp-3} in the case where $\om_n$ converges smoothly to $\si_0$. 

This finally leaves us to prove  \eqref{claim:dist-sharp-3} in the case where all of the energy of $\om_n:\C\to S^2$ concentrates near $z=0$.
In this case the compactness theory of harmonic maps ensures that
after passing to a subsequence there are points $z_n^i\to 0$,   bubble scales $\la_n^i\to \infty$ and non-constant harmonic maps $\si_i:\C\to S^2$, $i=1,\ldots,\bar m\geq 1$, so that 
$$\om_i(z)-\sum (\si_i(\la_n^i(z-z_n^i))-\si_i(\infty))\to \om_\infty\equiv \text{const} $$
in both $H^1$ and $L^\infty$ on a fixed size disc $\DD_{c}$ around the point $z=0$. 

As above we can easily see that $\dist_{\om_n}(\om_n,v_n)$ does not tend to zero if there is more than one bubble, or if the points $z_i^n$ are so that $\mu_n\abs{z_i^n}\to\infty$ or if the scales 
$\la_i^n$ are not of the same order as $\mu_n$. 
After switching viewpoint via $z\mapsto \frac{1}{\mu_n z}$ we are hence back in the situation where we are dealing with a sequence of rational maps that converges smoothly to a harmonic map which is not branched at $z=0$ and can hence repeat the above argument to deduce that 
$d_{\om_n}(v_n,\om_n)^2\geq c \mu_n^{-2}\log\mu_n$. 

This completes the proof of the first claim of the lemma.

The key observation in the proof of the second claim \eqref{claim:dist-sharp-4}  is that this estimate is trivially true if 
\beq
\label{fast-fertig}
\thalf\sum_i \int_{S^2\setminus \Om_i} \abs{\na \om_i}^2 \dvgz\geq c \mu^{-1} \text{ for some } c=c(k)>0.\eeq
 Indeed, as $\sum E(\om_i)=4\pi k\leq  
E(v)=\sum_iE(v,\Om_i) $ this ensures that 
\beqas
c\mu^{-1}&\leq \thalf\sum \int_{S^2\setminus \Om_i} \abs{\na \om_i}^2 \dvgz= \sum E(\om_i)-\thalf \sum \int_{\Om_i}\abs{\na\om_i}^2\\
&\leq \thalf \sum \int_{\Om_i}
\abs{\na v}^2-\abs{\na\om_i}^2 =
\sum \int_{\Om_i}(\na v-\na \om_i)(\na v+\na \om_i)\\
&\leq C \sum \norm{\na v-\na \om_i}_{L^2(\Om_i,\gz)}.
\eeqas
Importantly \eqref{fast-fertig} holds if the collection of rational maps is given by two maps $\om_1$ and $\om_2$, which are so that one of them approximates the base and one approximates the bubble as in this case we will have 
\beq
\label{est:fast-fertig} 
\int_{A_*\setminus \Om_i} \abs{\na \om_i}^2\geq c \mu^{-1} , \quad 
 A_*:=\{z: \thalf \mu^{-1/2}\leq \abs{z}\leq 2\mu^{-1/2}\}\eeq
 for at least one of $i=1,2$.
 
Indeed, 
standard compactness results for harmonic maps imply that if $\norm{\na (\om_1-\si_0)}_{L^2(S^2)}$ is small then $\om_1$ is indeed $C^1$ close to $\si_0$
and hence so that $\abs{\na \om_1}\geq c=c(k)>0$ on $A^*$. The 
analogue estimate of $\abs{\na \om_2}\geq c$  for maps for which $ \norm{\na (\om_2-\na \si_a(\mu\cdot))}_{L^2(S^2)}$ is small follows by symmetry. 
As  $\Om_1$ and $\Om_2$ are disjoint, we know that 
$\Area(A^*\setminus \Om_i)\geq \half \Area(A^*)\geq c \mu^{-1}$ for at least one of $i=1,2$, and hence that \eqref{est:fast-fertig} holds for at least one of $i=1,2$.

This is sufficient to obtain the second claim of the lemma as 
 a simple compactness argument shows that the left hand side of \eqref{claim:dist-sharp-4} is bounded away from zero uniformly
for any other collection of rational maps and any partition for which \eqref{fast-fertig} does not hold. 
\end{proof}

\section{Proof of Proposition \ref{prop:1}} \label{sec:Prop}
We finally provide a proof of the \Loj estimate claimed in Proposition \ref{prop:1} that bounds the energy defect in terms of the weighted tension. We recall that the \Loj estimate \eqref{est:Loj-standard} that involves the tension with respect to the standard metric was proven by Topping in \cite{Topping-rigidity} and will use several key ideas from that proof.

In this section we view maps $u:S^2\to S^2$ as maps from $\C$ to $S^2$ unless indicated otherwise 
and will exploit the well known fact that for maps into the sphere the energy has a natural splitting into its holomorphic and antiholomorphic part. Namely, we can use that the energy density (with respect to the standard metric on the domain $\C$) can be expressed as 
$$\thalf   \big[\abs{\partial_{x_1} u}^2+\abs{\partial_{x_2} u}^2\big]=\abs{u_z}^2 +\abs{u_{\bar z}}^2$$
if we view the target $S^2$ as a complex manifold and consider the corresponding holomorphic and antiholomorphic derivatives $u_z=\half(u_{x_1}-i u_{x_2})$ and $u_{\bar z}=\half (u_{x_1}+i u_{x_2})$. Equivalently, we can compute $\abs{u_z}^2$ and $\abs{u_{\bar z}}^2$ 
 by introducing stereographic  coordinates also on the target to view $u$ as a map $\tilde u:= \pi^{-1}\circ u:\C \to \C$ and set
$\abs{u_z(z)}^2= \half \abs{d\pi(\tilde u(z))}^2 \abs{\tilde u_z(z)}^2$, $\pi:\C\to S^2$ the inverse stereographic projection defined in \eqref{def:pi}.

Combined with the conformal invariance of the energy with respect to the domain metric we hence get the well known splitting of
$ 
E(u)=E_{\partial}(u)+E_{\bar \partial}(u) $ 
for
$$E_{\partial}(u)= \int_{\C}\abs{u_z}^2 dv_{\C} \text{ and } E_{\bar\partial}(u)=\int_{\C}\abs{u_{\bar z}}^2 dv_{\C}.
$$
As in \cite{Topping-rigidity} will also exploit that the degree of maps $u:\C\to S^2$ is given by 
$$4\pi \deg(u)=E_{\partial}(u)-E_{\bar \partial }(u)$$
and hence that the energy defect of maps with positive respectively negative degree is given by
$\de_u=2E_{\bar \partial} (u)$ respectively $\de_u=2E_{\partial} (u)$. 

As precomposing with a reflection changes the sign of the degree it hence suffices to prove
\beq
\label{claim:Loj-hol-energy-1}
 E_{\partial}(u)\leq C(1+\max \abs{\log(r_i)}) \norm{\tau_g(u)}_{L^2(S^2,g)}^2 \text{ for some } C=C(d)
 \eeq
for 
 metrics $g$ as in \eqref{def:g} and 
 maps $u:\C\to S^2$ with  $-d\leq \deg(u)\leq 0$ 
for which
the assumption \eqref{ass:Loj-est} is satisfied for a sufficiently small $\eps=\eps(d)>0$.

To this end we want to exploit the pointwise bound on the holomorphic derivative $u_z$ obtained in formula (12) of
the proof of Lemma 1 in \cite{Topping-rigidity} which tells us that
 for any $z\in \C$
\newcommand{\dvcw}{dv_\C(w)}
\beq
\label{est:uz-initial}
\abs{u_z(z)}\leq (2\pi)^{-1}\int_\C \frac{\abs{\tc(u)(w)}}{\abs{z-w}}\dvcw+(2\pi)^{-1}\int_\C\frac{\abs{u_z}^2}{\abs{z-w}} \dvcw.
\eeq
Here $\tc(u)$ denotes the tension of the map $u:\C\to S^2$ with respect to the standard metric $g_{\C}=dx_1^2+dx_2^2$ and we point out that our notation deviates slightly from the one used in \cite{Topping-rigidity} and that here $\abs{u_z}^2$ denotes the size of the holomorphic derivative of the map $u$ into $S^2$ (rather than of the map $\tilde u$ into $\C$ as in \cite{Topping-rigidity}). 
 
The main difficulty in deriving the claimed estimate \eqref{claim:Loj-hol-energy-1} from the above formula is that we want to establish bounds that involve the tension with respect to a metric $g$ whose conformal factor can be highly concentrated at multiple points. 

\begin{proof}[Proof of Proposition \ref{prop:1}]
Let $p_i\in S^2$, $r_i$ and $u:S^2\to S^2$ be as in the proposition, where we note that after a rotation of the domain we can assume without loss of generality that the points $p_i$ all  have distance at least $\tilde c=\tilde c(d)>0$ from the equator $S^1\times \{0\}\subset S^2$. 

By symmetry it then suffices to bound the $z$-energy of $u:S^2\to S^2$ on the upper hemisphere $S^+=S^2\cap\{y_3>0\}$ and we will do so by working in the usual sterographic coordinates on the domain which map $S^+$ to the unit disc $\DD_1$ around $z=0$.

We now let $c=c(d)\in (0,\half)$ be so that points in $\pi(\DD_{(1+2c)^{-1}}\setminus\DD_{1-2c})\subset S^2$, have distance no more than $\tilde c$ from the equator for the constant $\tilde c=\tilde c(d)>0$ obtained above. Hence the centres $p_i\in S^2$ of the weights $\rho_{p_i,r_i}^2$ that are used to define the domain metric all correspond to points $b_i:=\pi^{-1}(p_i)$ outside of this annulus and we reorder these points so that 
$\abs{b_i}\leq 1-2c$ for $i\leq m$ for some $m\in\{0,\dots,d\}$ while $\abs{b_i}\geq (1-2c)^{-1}\geq 1+2c$ otherwise. 

In stereographic coordinates our weighted metric is given by 
$$g=\rho^2g_{\C} =(1+\sum\rho_{p_i,r_i}^2\circ \pi) \pi^* g_{S^2}$$ 
and we note that
 on $\DD_{1+c}$ the weights that correspond to $i>m$ are bounded by a constant $C=C(d)$  as the distance of these $p_i$ to $\pi(\DD_{1+c})$ is bounded away from zero. 

On the other hand 
we can use that the conformal factors $\rho_{p_i,r_i}\circ \pi$ with $i\leq m$   are bounded by 
$\rho_{p_i,r_i}\circ \pi\leq C\frac{\mu_i}{1+\mu_i^2\abs{x-b_i}^2}$ on bounded subsets of $\C$
and that the corresponding dilation factors $\mu_i$ in 
$M_{p_i,r_i}(p)=\pi_{p_i}(\mu_i\pi_{p_i}^{-1}(p))$ 
are of order $\mu_i\sim r_i^{-1}$, compare  Lemma \ref{lemma:dilation}.
In the following we can thus use that the conformal factor $\rho$ of our weighted metric $g$ is bounded by 
\beq
\label{est:conf-factor-prop}
\rho\leq C +C \sum_{i=1}^m \rho_i \text{ for } \rho_i:= \frac{\mu_i}{1+\mu_i^2\abs{x-b_i}^2} \text{ on  }\DD_{1+c}.
\eeq
We will show that 
\beq
\label{claim:Loj-hol-energy}
 E_{\partial}(u, \DD_1)\leq C(1+\log\bar \mu) \norm{\tau_g(u)}_{L^2(S^2,g)}^2+\tfrac{1}{4} E_{\partial}(u,\C)
 \eeq
 for $\bar \mu=\max\mu_i$ and for maps $u$ for which
\beq
\label{ass:proof-prop}
\La_g E_\partial(u)\leq \eps \text{ for } \La_g:= \prod_i (1+\log\mu_i)
\text{ and a suitably chosen } \eps=\eps(d)>0.
\eeq
We note that this is sufficient to establish \eqref{claim:Loj-hol-energy-1} for maps satisfying \eqref{ass:Loj-est}, and hence to complete the proof of the proposition,  since $\mu_i\sim r_i^{-1}$ and since \eqref{claim:Loj-hol-energy} applied to $u(\frac 1z)$  yields the analogue bound also on $E_\partial(u,\C\setminus \DD_1)=E_\partial(u(\frac1{\cdot}),\DD_1)$.

To prove \eqref{claim:Loj-hol-energy} it is convenient 
to introduce the 
 short hand $\ftau:= \rho^{-1}\abs{\tc(u)}$ to write the norm of the weighted tension as
$\norm{\tg(u)}_{L^2(\Om,g)}=\norm{\rho^{-1}\tc(u)}_{L^2(\Om,g_\C)}=\norm{\ftau}_{L^2(\Om,g_\C)}$.

For $z\in \DD_1$ we then use \eqref{est:uz-initial} and $\abs{\tau_\C(u)}=\rho f_\tau\leq C +C\sum_{1\leq i\leq m} \rho_i f_\tau$ to estimate 
\beqa\label{est:uz-basic}
\abs{u_z(z)}&\leq \int_{\DD_{1+c}} \frac{\rho \ftau}{\abs{z-w}} \dvcw 
+ \int_{\DD_{1+c}} \frac{\abs{u_z}^2}{\abs{z-w}}\dvcw+C\norm{\tc(u)}_{L^1(\C,g_\C)}+C \Ede(u)\\
&\leq C \sum_{0\leq i\leq m} h_i(z)+Cs(z) +C\norm{\tg(u)}_{L^2(S^2,g)}+C \Ede(u)
\eeqa 
for 
\beq
\label{def:hi}
h_0(z):=  \int_{\DD_{1+c}} \frac{\ftau}{\abs{z-w}} \dvcw \text{ and } h_i(z):=  \int_{\DD_{1+c}}\rho_i \frac{\ftau}{\abs{z-w}} \dvcw,\quad i=1,\ldots, m
\eeq
as well as  
\newcommand{\jj}{\mathit{s}} 
\beq\label{def:e}
\jj(z):= \int_{\DD_{1+c}} \frac{\abs{u_z}^2}{\abs{z-w}}\dvcw.
\eeq
Here we have furthermore used that the $L^1$ norm of the tension is conformally invariant and 
hence that 
$\norm{\tc(u)}_{L^1(\C,g_\C)}=\norm{\tg(u)}_{L^1(S^2,g)}\leq C \norm{\tg(u)}_{L^2(S^2,g)}
$ for $C=C(d)$ since $\text{Area}(S^2,g)\leq 4\pi (d+1)$. 

To obtain the desired estimate 
\eqref{claim:Loj-hol-energy}
on $\norm{u_z}_{L^2(\DD_1)}^2$ 
it hence suffices to obtain suitable bounds on $\norm{h_i}_{L^2(\DD_1)}$ and $\norm{\jj}_{L^2(\DD_1)}$. 
As in \cite{Topping-deg-1} we can view these functions $h_i$ and $s$ as Riesz-potentials 
$$R(\tilde f)(z):=\int_\C \frac{\tilde f(w)}{\abs{z-w}} \dvcw$$
for suitable $\tilde f:\C\to \R$. We however note that while
\beq\label{est:Riesz}
\norm{R(\tilde f)}_{L^{\frac{2q}{2-q}}(\C)}\leq C_q \norm{\tilde f}_{L^q(\C)} \text{ for every }\tilde f\in L^q(\C),
\eeq
whenever $q\in (1,2)$, this estimate is not valid for the one exponent $q=1$ for which the $L^q$ norm of the tension is conformally invariant.  

If $\rho$ was uniformly bounded, as was the case in \cite{Topping-deg-1}, this would be no problem as we could simply apply the above estimate for some $q\in (1,2)$ and then use that $\norm{\rho f_\tau}_{L^q}\leq C\norm{\rho}_{L^\infty} \norm{f_\tau}_{L^q}\leq C\norm{\rho}_{L^\infty} \norm{\tau_g(u)}_{L^2}$. 

Here we however deal with a situation where $\rho$ can be highly concentrated and where not only $\norm{\rho}_{L^\infty}$, but also $\norm{\rho}_{L^r}$ for any $r>2$, scales like a power of $\bar \mu$. 
If we were to use \eqref{est:Riesz} directly for some $q>1$ to bound the norms of $h_i$ we would hence end up with a \Loj estimate that involves a factor which scales like a power of $\bar \mu$ rather than like $\log(\bar\mu)$. Such an estimate would not be optimal and, importantly, would not allow us to prove the quantitative estimates with sharp rates as stated in Theorems \ref{thm:3} and \ref{thm:gauge}. 

We thus need to proceed with more care and we will split 
$\DD_{1}$ into subsets $A_J$, $J\in \N^m$, on which the conformal factors $\rho_i$ are essentially constant and will use that these domains are small whenever $\rho_i$ is large. 
We will obtain most of these domains as intersections of diadic annuli  $\DD_r\setminus \DD_{r/2}(b_i)$ 
for radii that range from order $\mu_i^{-1}$ to order $1$. To be more precise, we let 
$j_0=j_0(d)$ be so that $\frac{c}{4}\leq 2^{-j_0}\leq \frac{c}{2}$ for the constant $c=c(d)>0$ obtained above and for each 
$i=1,\ldots, m$ let $N_i\in \N$ be so that $2^{-N_i}\in [\mu_i^{-1},2\mu_i^{-1})$. For each $i$ we then partition $\DD_1$ into the sets $A_j^i$, $j=j_0,\ldots,N_i$, where 
\beqa \label{def:Ai}
 A_{j}^i:= \DD_{2^{-j}}(b_i)\setminus \DD_{2^{-j-1}}(b_i) \text{ for } j=j_0+1,\ldots ,N_i-1
\eeqa
while
\beq
\label{def:Ai-zero}
A_{N_i}^i:=\DD_{2^{-N_i}}(b_i) \text{ and }
A_{j_0}^i:=\DD_1(0)\setminus \DD_{2^{-(j_0+1)}(b_i)}.\eeq
We also  set  $A_j^i=\emptyset$ for all other $j$ and for $J=(j_1,\ldots, j_m)\in \N^m$ let $$A_J:=\bigcap A_{j_i}^i \text{ and } r_J:=\min(r_{j_i})=2^{-\max j_i} \text{ for } r_{j_i}:=2^{-j_i}. $$ 
In the following we use the convention that
we only ever sum over indices $J$ for which $A_J$ is non-empty and that any statements involving the sets $A_J$ or $A_i^j$ are to  be understood as statements about such non-empty sets. 

We note that these sets $A_J$ have the following basic properties: \\ The total number of sets $A_J$ is bounded by $\prod (N_i+1)\leq C\La_g$, $\La_g$ as in \eqref{ass:proof-prop}, and the diameter of each $A_J$  is bounded by $C r_J$ for a constant $C$ that could be chosen as $C=2$ for indices $J\neq (j_0,\ldots,j_0)$, but that we allow to depend on $d$ so that we can also include the case that $ J= (j_0,\ldots,j_0)$. 

We also note that since we work with diadic annuli we know that for any of the sets  $A_{j_0}^{i_0}$ and any $i\neq i_0$ there can be at most $3$ indices $j$ for which $A_i^j\cap A_{j_0}^{i_0}\neq \emptyset$ and for which $r_j\geq r_{j_0}$, i.e.~$j\leq j_0$. 
%
For any $\beta>0$ and any $i_0$ and $k_0$ we hence deduce that 
\beqa \label{est:sum-rJ}
 \sum_{J: j_{i_0}=k_0} r_J^\beta
&\leq \sum_{j\geq k_0} r_j^\beta\cdot \#\{J: j_i\leq j \text{ for } i\neq i_0 \text{ and } j_{i_0}=k_0 \} \\
&\leq r_{k_0}^\beta\sum_{j\geq k_0} 2^{-(j-k_0)\beta}(j-k_0+3)^{m-1} \leq C r_{k_0}^{\beta}
\eeqa
for some $C=C(d,\beta)$, and hence also 
\beq
\label{est:sum-rJ-total}
 \sum_{J} r_J^\beta\leq C=C(d,\beta).
 \eeq 
The main step in the proof of Proposition \ref{prop:1} is now to show
that 
\beq \label{def:Iq} 
I_q(u):= \sum_J r_J^{\gamma}\norm{u_z}_{L^{\frac{2q}{2-q}}(A_J)}^2 \text{ for } \gamma:=4-\tfrac4q \text{ and } q\in (1,2)
\eeq
is bounded by 
\beq
\label{claim:Iq}
I_q(u)\leq C \log\bar\mu \norm{\tg(u)}_{L^2(S^2,g)}^2+C\La_g\Ede(u)^2+C \Ede(u) I_{q}(u)
\eeq
for some $C=C(d,q)$.  We will prove this claim below and first explain how it implies the estimate \eqref{claim:Loj-hol-energy} that we need to establish to complete the proof of the proposition. 

To this end we can fix an arbitrary $q\in (1,2)$, say $q=\frac32$, and use that 
$\Ede(u)\leq \frac{\eps}{\La_g}\leq \eps$ is assumed to be small to obtain from \eqref{claim:Iq} that 
\beqs
I_q(u)\leq C \log\bar\mu \norm{\tg(u)}_{L^2(\C,g)}^2+C\La_g\Ede(u)^2 \text{ for some } C=C(d).
\eeqs
As  $\text{diam}(A_J)\leq C r_J$ and as $\gamma= 2(1-\frac{2-q}{q})$
we can thus bound
\beqas
\norm{u_z}_{L^2(\DD_1)}^2&=\sum\norm{u_z}_{L^2(A_J)}^2\leq \sum\norm{u_z}_{L^{\frac{2q}{2-q}}(A_J)}^2 \abs{A_J}^{1-\frac{2-q}{q}}\leq C I_q(u)\\
&\leq C \log\bar\mu \norm{\tg(u)}_{L^2(S^2,g)}^2+C\La_g\Ede(u)^2\leq C\log\bar\mu \norm{\tg(u)}_{L^2(S^2,g)}^2+C\eps\Ede(u)\\
&\leq C\log\bar\mu \norm{\tg(u)}_{L^2(S^2,g)}^2+\tfrac14\Ede(u)
\eeqas 
where the last estimate holds provided 
 $\eps=\eps(d)>0$ is chosen small enough. This gives \eqref{claim:Loj-hol-energy} and hence reduces the proof of the proposition to showing that $I_q(u)$ satisfies \eqref{claim:Iq}.

To prove \eqref{claim:Iq} we will analyse the contributions of each of the terms in \eqref{est:uz-basic} to $I_q$ separately. In these estimates we will use several times that the exponent $\gamma= 4-\frac4q$ is so that 
$2-\gamma=2\frac{2-q}{q}$ and hence so that 
for any  $\Om\subset \C$ and $f:\Om\to \R$
 \beq \label{est:f-sobolev-by-infty} 
 \norm{f}_{L^{\frac{2q}{2-q}}(\Om)}^2\leq C\text{diam}(\Om)^{2-\gamma}\norm{f}_{L^\infty(\Om)}^2
\text{ and }
 \norm{f}_{L^{q}(\Om)}^2\leq C \text{diam}(\Om)^{2-\gamma}\norm{f}_{L^2(\Om)}^2
 \eeq 
where here and in the following norms are computed with respect to the standard volume element of $\C$ unless indicated otherwise. 

Combining \eqref{est:sum-rJ-total} (applied for $\beta=2-\gamma>0$) with the first of these estimates ensures that 
the contribution of the last two terms in \eqref{est:uz-basic} to $I_q(u)$ is  bounded by
$ C\norm{\tg(u)}_{L^2(S^2,g)}^2+
C \Ede(u)^2$. 
It thus remains to control the contributions  to $I_q$ of the functions $h_i$, $i=0,\ldots, m$ and $\jj$ defined in \eqref{def:hi} and \eqref{def:e}.

For $h_0= R(\ftau \chi_{\DD_{1+c}})$ we can first apply 
 \eqref{est:sum-rJ-total} and then use 
\eqref{est:Riesz} to bound
\beqs
\sum_J r_J^\gamma\norm{h_0}_{L^{\frac{2q}{2-q}}(A_J)}^2\leq C \norm{h_0}_{L^{\frac{2q}{2-q}}(\DD_{1})}^2
\leq C\norm{\ftau}_{L^q(\DD_{1+c})}^2\leq C \norm{\ftau}_{L^2(\DD_{1+c})}^2\leq  C\norm{\tg(u)}_{L^2(S^2,g)}^2.
\eeqs 
For $i\geq 1$ we instead first use 
\eqref{est:sum-rJ} as well as that $A_J\subseteq A_{j_i}^i$ and $r_J\leq r_{j_i}$ to bound 
\beq
\label{est:hi-1}
\sum_J r_J^{\gamma}\norm{h_i}_{L^{\frac{2q}{2-q}}(A_J)}^2 \leq C \sum_{k=j_0}^{N_i} r_k^{\gamma} \norm{h_i}_{L^{\frac{2q}{2-q}}(A_k^i)}^2
\eeq
and will show that each of these terms is bounded by a fixed multiple of $\norm{\tau_g(u)}_{L^2(S^2,g)}^2$. 

To see this we consider the sets $B_k^i$ that are defined by
$B_k^i:= A_{k-1}^i\cup A_{k}^i\cup A_{k+1}^i$
for $k\in\{j_0+1,\ldots, N_i\}$ and   
$B_{j_0}^i:= (\DD_{1+c}\setminus \DD_1)
\cup A_{j_0}^i\cup A_{j_0+1}^i$ which are chosen so that
\beq 
\abs{z-w}^{-1}\leq C r_k^{-1} \text{ whenever }z\in A_k^i \text{ and } w\notin B_k^i.
\eeq
For $z\in A_k^i$
we can hence bound 
\beqa
\label{est:hi}
\abs{h_i(z)}&\leq  \norm{\rho_i}_{L^\infty(B_k^i)} R(\ftau \chi_{B_k^i}) +Cr_k^{-1}\sum_{j: \abs{j-k}\geq 2} \norm{\rho_i}_{L^\infty(A_j^i)}\norm{\ftau}_{L^1(A_j^i)} \\
&\leq \norm{\rho_i}_{L^\infty(B_k^i)} R(\ftau \chi_{B_k^i}) +C r_k^{-1} \sum_j 
 \norm{\rho_i}_{L^\infty(A_j^i)} r_j \norm{\ftau}_{L^2(A_j^i)}\\
&\leq C r_k^{-1} R(\ftau \chi_{B_k^i}) +C r_k^{-1} \sum_j 
2^{-(N_i-j)} \norm{\ftau}_{L^2(A_j^i)}
\eeqa 
where we recall that $A_{j}^i\neq \emptyset$ implies $j\leq N_i$ and  where
the last step follows since 
\beq
\label{est:rhoi-prop}
\rho_i(z)= \frac{\mu_i}{1+\mu_i^2\abs{z}^2}\leq C (\mu_ir_j)^{-1}r_j^{-1} \leq C 2^{-(N_i-j)} r_j^{-1} \text{ on } A_j^i.
\eeq 
Combining \eqref{est:hi}
 with \eqref{est:Riesz}, \eqref{est:f-sobolev-by-infty} and $\text{diam}(A_k^i)\leq \text{diam}(B_k^i)\leq C r_k$  yields 
\beqas
r_k^\gamma \norm{h_i}_{L^{\frac{2q}{2-q}}(A_k^i)}^2
&\leq C r_k^{\gamma-2} \norm{\ftau}_{L^q(B_k^i)}^2+C r_k^{\gamma-2} \diam(A_k^i)^{(2-\gamma)} \big(\sum_j 
2^{-(N_i-j)}\norm{\ftau}_{L^2(A_j^i)}\big)^2\\
&\leq C\norm{\ftau}_{L^2(B_k^i)}^2+C
\norm{\ftau}_{L^2(\DD_{1+c})}^2
\leq C\norm{\ftau}_{L^2(\DD_{1+c})}^2\leq\norm{\tau_g(u)}_{L^2(S^2,g)}^2 .
\eeqas 
Inserting this into 
\eqref{est:hi-1} and using that $N_i\leq C(1+\log\mu_i)$ hence gives the desired bound on the contribution of $h_i$ to $I_q$ of 
\beq
\label{est:hi-int-final}
\sum r_J^{\gamma}\norm{h_i}_{L^{\frac{2q}{2-q}}(A_J)}^2\leq  C(1+\log\mu_i) \norm{\tau_g(u)}_{L^2(S^2,g)}^2.
\eeq
Similarly, to bound the contribution of $\jj$ to $I_q(u)$, we consider the subsets 
$B_J:= \bigcap B_{j_i}^i$,  for $B_j^i$ as above, and use that $\abs{z-w}^{-1}\leq Cr_J^{-1}$ whenever $z\in A_J$ and $w\notin B_J$. 
For any $z\in A_J$ we can thus bound 
$$
\jj(z)\leq R(\abs{u_z}^2 \chi_{B_J})+Cr_J^{-1} \Ede(u)$$
which results in an estimate of
\beqas
\sum_{J}r_J^\gamma \norm{s}_{L^{\frac{2q}{2-q}}(A_J)}^2
&\leq C \sum_{J}r_J^\gamma \norm{\abs{u_z}^2}_{L^{q}(B_J)}^2+\sum_J
r_J^{\gamma-2}\diam(A_J)^{2-\gamma}
\Ede(u)^2\\
&\leq C\sum_Jr_J^\gamma \norm{\abs{u_z}^2}_{L^{\frac{q}{2-q}}(B_J)}\norm{\abs{u_z}^2}_{L^1(B_J)}+C\#\{J: A_J\neq \emptyset\} \Ede(u)^2\\
&\leq C \sum_Jr_J^\gamma \norm{u_z}_{L^{\frac{2q}{2-q}}(B_J)}^2 E_\partial(u) +C\La_g\Ede(u)^2\\
&
\leq C\Ede(u) I_q(u)+C\La_g\Ede(u)^2.
\eeqas
Altogether this establishes \eqref{claim:Iq} which completes the proof of the proposition. 
\end{proof}

 \appendix
 \section{Appendix}
 \subsection{Basic properties of M\"obius transforms} $ $\\
 Throughout the paper we use the following basic properties of dilations
\begin{lemma}
\label{lemma:dilation}
Let $M:S^2\to S^2$ be the M\"obius transform that corresponds to a dilation around a point $\bar x$ which maps $B_{\bar r}(\bar x)$ to a hemisphere for some radius $\bar r\in (0,\frac\pi 2]$. 
Then the corresponding conformal factors $\rho_M=\frac{1}{\sqrt{2}} \abs{dM}$ and $\rho_{M^{-1}}=\frac{1}{\sqrt{2}} \abs{dM^{-1}}$ satisfy
\beq
\label{est:rho-approx}
\rho_M(x)\sim \frac{(\bar r)^{-1}}{1+((\bar r)^{-1}d_{S^2}(x,\bar x))^2} \text{ and } \rho_{M^{-1}}(y)\sim \frac{(\bar r)^{-1}}{1+((\bar r)^{-1}d_{S^2}(y,-\bar x))^2}
\eeq 
In particular, there exists a universal constant $K_0$ so that
\beq
\label{est:rho-uniform}
\sup_{x_{1,2}\in B_{2\bar r}(x)} \frac{\rho_{M}(x_1)}{\rho_M(x_2)}\leq K_0 \text{ for all } x\in S^2.
\eeq 
Furthermore, for any $x\in S^2$ with $\bar r\leq \dist(x,\bar x)\leq \pi/2$ we have 
\beq
\label{rel:dist-Mob} 
\dist(M(x),-\bar x)\sim \frac{\bar r}{\dist(x,\bar x)}.
\eeq
\end{lemma}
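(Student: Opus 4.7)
The plan is to reduce every claim to an explicit computation in stereographic coordinates. By the $SO(3)$-invariance of the setup I may assume $\bar x$ is the south pole and work with the inverse stereographic projection $\pi:\C\to S^2$ from the antipodal point $-\bar x$. In these coordinates $M$ takes the form $\tilde M:z\mapsto \mu z$ with $\mu:=\cot(\bar r/2)$, and since $\tan(\bar r/2)\sim \bar r$ uniformly for $\bar r\in(0,\pi/2]$ one has $\mu\sim\bar r^{-1}$ with universal constants. A point $x$ at spherical distance $d:=d_{S^2}(x,\bar x)$ from $\bar x$ corresponds to $z=\pi^{-1}(x)$ with $|z|=\tan(d/2)$, which in particular is comparable to $d$ as long as $d$ is bounded away from $\pi$. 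Combining the well known identity $(\pi^{-1})^* g_{S^2}=4(1+|z|^2)^{-2} g_\C$ with the analogous identity for $M^* g_{S^2}$ then yields the explicit formula
\beqs
\rho_M(x)=\frac{\mu(1+|z|^2)}{1+\mu^2|z|^2}.
\eeqs

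To deduce the first half of \eqref{est:rho-approx} from this I would split into the regimes $d\leq \bar r$, $\bar r\leq d\leq\pi/2$, and $\pi/2\leq d\leq \pi$, and in each case compare the two factors with the appropriate powers of $\max(1,d/\bar r)$; in the last regime $\tan(d/2)$ blows up but the product $\mu|z|$ is still controlled via $\mu\sim\bar r^{-1}$, so the dependence on $\bar r$ and $d$ that appears in the right hand side of \eqref{est:rho-approx} is reproduced in all three cases. For $\rho_{M^{-1}}$ I would use the standard observation that a dilation at $\bar x$ with factor $\mu$ coincides as a M\"obius transform with the dilation at $-\bar x$ with factor $1/\mu$ (they share the two fixed points $\pm\bar x$ and act identically on each meridian between them); in particular $M^{-1}$ itself is a dilation centred at $-\bar x$ that scales $B_{\bar r}(-\bar x)$ to a hemisphere, and the computation above applied with $\bar x$ replaced by $-\bar x$ immediately gives the second estimate in \eqref{est:rho-approx}.

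The uniform bound \eqref{est:rho-uniform} then follows from \eqref{est:rho-approx}, since $t\mapsto (1+t^2)^{-1}$ has ratio $f(t_1)/f(t_2)$ uniformly bounded whenever $|t_1-t_2|\leq 4$, and any two points in $B_{2\bar r}(x)$ have spherical distances to $\bar x$ that differ by at most $4\bar r$. For \eqref{rel:dist-Mob} I would use the elementary identity $d_{S^2}(M(x),-\bar x)=\pi-2\arctan(\mu|z|)$: for $d\in[\bar r,\pi/2]$ one has $\mu|z|\sim d/\bar r\geq c>0$, and the elementary expansion $\pi/2-\arctan(s)\sim s^{-1}$ valid for $s$ bounded below then yields $d_{S^2}(M(x),-\bar x)\sim (\mu|z|)^{-1}\sim \bar r/d$. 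The main obstacle is purely bookkeeping across the several scaling regimes in the explicit formula for $\rho_M$; none of the individual computations is difficult.
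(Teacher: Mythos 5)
Your proposal is correct and follows essentially the same route as the paper's proof: pass to stereographic coordinates in which $M$ becomes $z\mapsto\mu z$ with $\mu\sim\bar r^{-1}$, derive the explicit formula $\rho_M=\mu(1+|z|^2)/(1+\mu^2|z|^2)$ from the conformal factor of $\pi$, and read off all three claims (with $\rho_{M^{-1}}$ handled by the same observation that $M^{-1}$ is the dilation with factor $\mu$ centred at the antipodal point). Your treatment of \eqref{est:rho-uniform} and \eqref{rel:dist-Mob} is if anything slightly more explicit than the paper's, which simply notes these follow from \eqref{est:rho-approx} and a check in coordinates.
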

Here we use the short hand $T_1\sim T_2$ for terms that are related by   $C^{-1}T_1\leq T_2\leq CT_2$ for a universal constant $C$.

\gcmt{probably don't even need a proof of this... }
\begin{proof}
We can assume without loss of generality that $\bar x=(0,0,1)$ and work in standard stereographic coordinates on both the domain and the target in which $M=\pi\circ \tilde M\circ \pi^{-1}$ is given by the dilation $\tilde M(x)=\mu x$ for some $\mu\sim (\bar r)^{-1}$. As $\abs{d\pi(x)}=c\frac{1}{1+\abs{x}^2}$ and as $\pi$ is conformal we hence obtain 
that
$$\rho_M(x)\sim 
 \abs{d\pi(\mu \pi^{-1}(y))}
\mu  \abs{d\pi^{-1}(y)} \sim \tfrac{\mu}{1+\mu^2 \abs{\pi^{-1}(y))}^2} 
\abs{d\pi(\pi^{-1}(y))}^{-1}
= \mu\tfrac{1+\abs{\pi^{-1}(y))}^2}{1+\mu^2 \abs{\pi^{-1}(y))}^2}. 
$$
The first claim immediately follows as  $\abs{\pi^{-1}(y)}\sim d(y,\bar x)\leqs 1$ for points $y$ in the upper hemisphere while $\abs{\pi^{-1}(y)}\geq 1$ and $ d(y,\bar x)\sim 1$ for points in the lower hemisphere. 
The second claim is then a simple consequence of the first as $M^{-1}$ corresponds to a dilation around the antipodal point with the same factor $\mu$. The third claim is also an immediate consequence of the first and the final claim can then again just be checked in stereographic coordinates. \end{proof}

\subsection{Proof of Lemma \ref{lemma:rational}}
\label{appendix:rational}
$ $\\
Let $h$ be a rational map and let 
$B_{R_i}(z_i)$, $i\in J_1$ respectively $i\in J_2$ be collections of balls for which the assumptions of Lemma \ref{lemma:rational} hold, where we can of course assume that $R_i\leq \pi$ and that  $d\in (0, \half]$, compare Remark \ref{rmk:rho-convention}. We let $\Om:= S^2\setminus \bigcup_{J_2} B_{4R_i}(z_i)$ be the set on which we claim to have the pointwise bound \eqref{claim:lemma-rat} on the energy density and as above use the short hand 
 $T_1\leqs T_2$ if  $T_1\leq CT_2$ for a universal constant and the analogue convention for $\sim$ and for $O(\cdot)$ notation. 

The smallness of the energy on the annuli $B_{2R_i}\setminus B_{R_i}(z_i)$, $i\in J_2$, allows us to choose  radii $\hat R_i\in [R_i,2R_i]$ so that the angular energy on the corresponding circles is small, namely so that 
$\int_{S^1}\abs{h_\theta(\hat R_i e^{i\theta})}^2 d\theta\leqs \eps_1$.
The oscillation of $h$ over these circles is hence of order $O(\sqrt{\eps_1})$. As $\hat R_i\leq 2R_i$ ensures that $\dist(\Om ,\partial B_{\hat R_i}(z_i))\geq 2R_i$ for all $i\in J_2$ we can also use that 
\beq 
\label{est:hallo-0}
 \abs{y-w} \sim \abs{y-z_i} \text{ for all } w\in \partial B_{\hat R_i}(z_i), y\in\Om \text{ and } i\in J_2.\eeq
We set $F:=\bigcup_{J_2} B_{\hat R_i}(z_i)$ and note that since $F$ always contains the second collection of balls  we still get the analogues of \eqref{ass:lemma-rat-1} and \eqref{ass:lemma-rat-3} if we cut out this set instead of the balls $B_{R_i}(z_i)$, $i\in J_2$. 

We first prove the claim \eqref{claim:lemma-rat} for points 
$y\in \Om$ which are contained in  $\bigcup_{i\in J_1} B_{4R_i}(z_i)$.
For each such point there is a 
 $j\in J_1$ so that \eqref{ass:lemma-rat-2} holds
 and hence so that 
$$ E(h, B_{dR_j}(y)\setminus F)\leq 2\eps_1 \text{ and  } \abs{y-z_j}\leq d^{-1} R_j.$$
We can then choose a radius  $\hat r\in [\half dR_j,dR_j]$ so that 
the angular energy of $h$ on $(\partial B_{\hat r}(y))\setminus F$ is of order  $O(\eps_1)$,  and hence so that  
 the oscillation of $h$ over any connected component of 
this set $(\partial B_{\hat r}(y))\setminus F$ is of order  $O(\sqrt{\eps_1})$.

We now consider the connected component $U_y$ of the set $ B_{\hat r}(y)\setminus F$ which contains $y$ and note that  $\partial U_y$ is made up of arcs of the circles $\partial B_{\hat R_i}(z_i)$, $i\in J_2$, and of connected components of $(\partial B_{\hat r}(y))\setminus F$. As the total number of balls in our collections is no more than $k$ we hence
know that the oscillation over every connected component of $\partial U_y$ is bounded by 
$C\sqrt{\eps_1}$ for a number $C=C(k)$ that only depends on $k$. 

This ensures that if we fix $c_0=c_0(k)>0$ sufficiently small and if necessary reduce $\eps_1$ then there must be at least some ball $B_{2c_0}(p_0)$ in $S^2$ which is disjoint from the image $h(\partial U_y)$ of all these curves. 
At the same time \eqref{ass:lemma-rat-2} ensures that $\Area(h(U_y))\leq E(h,U_y)\leq 2\eps_1$ will be strictly smaller than  $\Area(B_{c_0}(p_0))$
if $\eps_1=\eps_1(k)>0$ is small enough. 

We can hence pick 
 some $p_1\in B_{c_0}(p_0)\setminus h(U_y)$ so that  
\beq\label{est:hallo}
h(\partial U_y)\subset S^2\setminus B_{c_0}(p_1) \text{ and } p_1\notin h(U_y).\eeq
To prove the desired bound on $\abs{\na h(y)}$ we now introduce 
stereographic coordinates on the domain which are centred at $y$ and stereographic coordinates on the target which are centred at $-p_1$, and hence map the point $p_1$ which is disjoint from $h(U_y)$ to $\infty$.

In these coordinates $h$ is given by $f(z)=\pi_{-p_1}^{-1}\circ h\circ \pi_y(z)$, $\pi_p^{-1}$ the stereographic projection which maps $p$ to $0$ and $-p$ to $\infty$, and \eqref{est:hallo} ensures that this function is not just meromorphic, but holomorphic in $\tilde U_y=\pi_y^{-1}(U_y)$ and so that $\abs{f(z)}\leq C=C(k)$ on $\partial \tilde U_y=\pi_y^{-1}(\partial U_y)$.

We can hence apply the
Cauchy formula for derivatives to bound the derivative $\partial_z f(z)$ of the holomorphic function $f$ at the point $z=0$ which represents $y$ in these coordinates by 
\beq
\label{eq:Cauchy}
\abs{\partial_z f(0)}=\babs{\frac1{2\pi i}\int_{\partial \tilde U_y}  \frac{f(w)}{w^2}dw}\leq C \int_{\partial \tilde U_y} \abs{w}^{-2} dS
\eeq
where the first integral is to be understood as a contour integral over the connected components of $\partial U_y$ with the appropriately chosen orientations, while the second integral is a just a standard integral over a curve in the plane $ \C \cong \R^2$.  

Since $\hat r \leq dR_J \leq \frac\pi2$ we know that  $U_y\subset B_{\pi/2}(y)$ and $\tilde U_y\subset \DD_1$ are contained in sets where $d\pi_y^{-1}$ respectively $d\pi_y$ are uniformly bounded so can deduce from \eqref{eq:Cauchy} that 
\beqa
\abs{\na h(y)} &\leq C \abs{\partial_z f(0)} \leq  C \int_{\partial U_y} \dist_{S^2}(x,y)^{-2} dS_{g_{S^2}}(x)\\
& \leq \sum_{J_2} \int_{\partial B_{\hat R_i}(z_i)} \dist_{S^2}(x,y)^{-2} dS_{g_{S^2}}(x)+\int_{\partial B_{\hat r}(y)}\dist_{S^2}(x,y)^{-2} dS_{g_{S^2}}(x)\\
&\leq C \sum_{J_2} \hat R_i \dist_{S^2}(z_i,y)^{-2}+ C R_j^{-1} \leq C \sum_{J_2} \rho_i+C\rho_j
\eeqa
where the penultimate step follows from \eqref{est:hallo-0} while the last step uses \eqref{est:rho-approx} as well as that $\dist(z_i,y)\geq 2 R_i$ for $i\in J_2$ and $y\in \Om$. 

Having hence established the claimed bound \eqref{claim:lemma-rat} for all $y\in \Om$ which are contained in $\bigcup_{I_1} B_{4R_i}(z_i)$ it remains to consider points $y$ so that $\dist(y,z_i)\geq 4R_i$ for all $i\in J_1\cup J_2$ and for such points we can modifying the above argument as follows: 
\\
Given any $i\in J_1$ we recall that the set of all indices $j$  for which 
\eqref{ass:lemma-rat-2} holds true for 
$x=z_i$ is not empty and note that it will always contain some index $j_i$ for which 
$R_{j_i}\leq R_i$. Indeed, if there are any $j$ with $R_j>R_i$ for which \eqref{ass:lemma-rat-2} holds then we can simply choose $j_i=i$.
As above we then use \eqref{ass:lemma-rat-2} to choose  
 a radius $\hat r_i\in [\half d R_{j_i}, dR_{j_i}]$ so that 
 the angular energy on $\partial B_{\hat r_{i}}(y_i)\setminus F$ is small.
We can then argue exactly as above except that we now work on the connected component $U_y$ of $(F\bigcup_{J_1}  B_{\hat r_{i}}(y_i))^c$ which contains $y$. In this case \eqref{est:hallo-0} 
is also true for indices of $J_1$ so we 
again obtain the desired bound of 
$$\abs{\na h( y)}\leqs \sum_{J_1\cup J_2} \hat r_i  \dist(y, z_i)^{-2}\leqs  \sum_{J_1\cup J_2} R_i  \dist( y, z_i)^{-2} \leqs \sum_{J_1\cup J_2} \rho_i
$$
where the last step follows from \eqref{est:rho-approx} and as we now consider points with $ \dist( y, z_i)\geq 4R_i$ for all $i\in J_1\cup J_2$.

M. Rupflin: Mathematical Institute, University of Oxford, Oxford OX2 6GG, UK\\
\textit{melanie.rupflin@maths.ox.ac.uk}

\end{document}